\tikzset{
  symbol/.style={
    draw=none,
    every to/.append style={
      edge node={node [sloped, allow upside down, auto=false]{$#1$}}}
  }
}
\def\C{\mathbb{C}}
\def\Z{\mathbb{Z}}
\def\Q{\mathbb{Q}}
\def\GL{\mathrm{GL}}
\def\eps{\epsilon} 
\def\Hom{\mathrm{Hom}}
\def\kk{\mathbf{k}}
\def\into{\hookrightarrow}
\def\dlog{\operatorname{dlog}}
\def\Br{\mathrm{Br}} 
\def\Pure{\mathrm{P}} 
\def\W{\mathrm{S}} 
\def\Grp{\Gamma} 
\def\Ar{\mathcal{A}} 
\def\Stab{\operatorname{\mathrm{Stab}}}
\def\StabWn{\operatorname{\mathrm{Stab}_{\W_n}\!}} 
\def\sgn{\eps} 
\def\qsgn{\Q_\sgn} 
\def\R{\mathrm{R}}
\def\Graphs{\mathcal{D}} 
\def\graph{D} 
\def\full{K} 
\def\bune#1{\phantom{_#1}\bullet^{#1}}
\def\buse#1{\phantom{^#1}\bullet_{#1}}
\def\buno#1{^{#1}\bullet\phantom{_#1}}
\def\buso#1{_{#1}\bullet\phantom{^#1}}
\title{
Cohomology of quasi-abelianized braid groups
}
\author{Filippo Callegaro \& Ivan Marin}
\date{\today}
\address[F. Callegaro]{Dipartimento di Matematica, University of Pisa, Italy}
\email{callegaro@dm.unipi.it}
\address[I. Marin]{LAMFA UMR CNRS 7352, Universit\'e de Picardie Jules Verne, Amiens, France}
\email{ivan.marin@u-picardie.fr}
\theoremstyle{plain}
\newtheorem{theorem}{Theorem}
\newtheorem{lemma}{Lemma}
\newtheorem{proposition}{Proposition}
\newtheorem{corollary}{Corollary}
\theoremstyle{definition}
\newtheorem{definition}{Definition}
\newtheorem{remark}{Remark}
\setlist[enumerate]{label=\alph*)}
\begin{document}
\begin{abstract}
	We investigate the rational cohomology of the quotient of (generalized) braid groups
	by the commutator subgroup of the pure braid groups. We provide a combinatorial
	description of it using isomorphism classes of certain families of graphs. We establish Poincar\'e dualities for them and prove a stabilization property for the infinite series of reflection groups.
\end{abstract}
\maketitle

\tableofcontents

\section{Introduction}

Let $\Br_n$ be the braid group on $n$ strands, $\Pure_n \subset \Br_n$ the pure braid group on $n$ strands, $\W_n = \Br_n/\Pure_n$ the symmetric group on $n$ letters. The quasi-abelianization of the braid group is the quotient $\Grp_n$ of the braid group by the
commutator subgroup of the \emph{pure} braid group. More generally, when $B$ is an Artin group of finite Coxeter type or, even more generally,
the generalized braid group associated to a (finite) complex reflection group $W$, its quasi-abelianization is its quotient by the commutator
subgroup of the associated pure braid group.

These groups, whose first implicit introduction can arguably be traced back to the same paper where Tits (also implicitely) first defined
the now called Artin groups (see \cite{TITS}), have gain interest in the past decades. They play a role in a number of works where
linear actions of braids are involved, as some of the linear representations of the braid groups appearing in nature factor
through this group, the pure braids being sent to diagonal (and therefore commuting) matrices (see \cite{TONGYANGMA,SYSOEVA,THMARIN} among others). 
More recently, the group $\Gamma_n$ has been shown to have a nice presentation and a nice categorical interpretation in \cite{PANAITESAIC}.

It was observed in \cite{GGO} that each $\Gamma_n$ is a crystallographic group, and also proved there that it has not any element of
order $2$, which has for the consequence that torsion-free crystallographic groups (also known as Bieberbach groups) can be constructed
easily from it. Subsequently, it was proved in \cite{CRYSTMARIN} that these properties are actually true for any group $\Gamma$ similarly associated
to an arbitrary complex reflection group, and in \cite{BECKMAR} that, however, many finite groups of odd order can be embedded
in $\Gamma$ (and actually any of them inside $\Gamma_n$ for some $n$).

In this paper we explore the rational cohomology of these groups, and show that it admits a nice combinatorial
description in terms of isomorphism classes of certain families of graphs. Moreover,
we exhibit a few remarkable phenomena. Among them, for $\Gamma_n$ and more generally for $\Gamma$
when $W$ is a complex reflectiong group of type $G(de,e,n)$, we show that the rational cohomology stabilizes,
as in the case of the usual braid group (see \cite{ARNOLD}) and other related groups (see e.g. \cite{CALMAR}, \cite{CALSALV}, \cite{RWW}, \cite{W}).
Also, we show that there is a Poincar\'e duality that
can appear under various disguise, and which also admits a simple combinatorial interpretation.

The first part of the paper is devoted to a detailed study of the case of the usual braid group.
We then consider the general case in Section \ref{sect:generalbraidgroups}, establish general properties there, and
then provide a combinatorial description in the case of the general series $G(de,e,n)$ of complex
reflection groups. This enables us to prove the general stabilization result in Section \ref{sect:stabilization}.

\section{Braid group case}

\subsection{General properties}

Let $\Br_n$ be the braid group on $n$ strands, $\Pure_n \subset \Br_n$ the pure braid group on $n$ strands, $\W_n = \Br_n/\Pure_n$ the symmetric group on $n$ letters.

\begin{definition}
We consider \emph{quasi-abelianized braid group}, that is the quotient $$\Grp_n := \Br_n/[\Pure_n, \Pure_n].$$ and we write $\Z\Ar_n$ for the abelianization of the pure braid group, that is the quotient $\Pure_n/[\Pure_n, \Pure_n] \simeq \Z^{\binom{n}{2}}$. 
\end{definition}
\begin{remark}
Since $\Pure_n$ is the fundamental group
of the complement in $\C^n$ of the hyperplanes defined by the equations $z_i = z_j$, for $i < j$,
 this group $\Z\Ar_n$ can be identified with
its first homology groups. Therefore, it is freely generated by one element  
per hyperplane $z_i = z_j$ (see e.g. \cite{ORLIKTERAO}), and the symbols $\omega_{ij}$, for $1 \leq i < j \leq n$ denote the elements of the dual basis. Inside the corresponding de Rham first cohomology group
it corresponds to (the class of) the 1-form $\dlog (z_i - z_j)$.
We order these $\omega_{ij}$ lexicographically.
\end{remark}

\begin{proposition}\label{prop:main}
For any $\Q \W_n$-module $M$ the following isomorphism holds:
$$
H^q(\Grp_n; M) = H^q(\Z\Ar_n; M)^{\W_n}.
$$
\end{proposition}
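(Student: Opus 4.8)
The plan is to use the Lyndon-Hochschild-Serre (LHS) spectral sequence associated to the group extension and exploit the rationality of coefficients to collapse it.

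Let me set up the extension. We have $\Gamma_n = \mathrm{Br}_n/[\mathrm{P}_n,\mathrm{P}_n]$. The pure braid group $\mathrm{P}_n$ maps to $\mathbb{Z}\mathcal{A}_n = \mathrm{P}_n/[\mathrm{P}_n,\mathrm{P}_n]$, and this sits as a normal subgroup of $\Gamma_n$ with quotient $\mathrm{S}_n$. So there's a short exact sequence $1 \to \mathbb{Z}\mathcal{A}_n \to \Gamma_n \to \mathrm{S}_n \to 1$.

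The LHS spectral sequence gives $E_2^{p,q} = H^p(\mathrm{S}_n; H^q(\mathbb{Z}\mathcal{A}_n; M)) \Rightarrow H^{p+q}(\Gamma_n; M)$. Since $\mathrm{S}_n$ is finite and coefficients are rational, $H^p(\mathrm{S}_n; N) = 0$ for $p > 0$ and any $\mathbb{Q}[\mathrm{S}_n]$-module $N$ (group cohomology of a finite group with rational coefficients vanishes in positive degrees). Hence the spectral sequence is concentrated on the column $p = 0$, so it degenerates at $E_2$ and $H^q(\Gamma_n; M) = E_2^{0,q} = H^0(\mathrm{S}_n; H^q(\mathbb{Z}\mathcal{A}_n; M)) = H^q(\mathbb{Z}\mathcal{A}_n; M)^{\mathrm{S}_n}$, which is exactly the claim.

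The main thing to verify carefully is that the $\mathrm{S}_n$-action appearing on $H^q(\mathbb{Z}\mathcal{A}_n; M)$ via the spectral sequence is the one that makes the invariants statement meaningful and matches the intended $\mathbb{Q}[\mathrm{S}_n]$-module structure. This requires checking that the conjugation action of $\Gamma_n$ on the normal subgroup $\mathbb{Z}\mathcal{A}_n$ descends to the quotient $\mathrm{S}_n$ — which holds precisely because $\mathbb{Z}\mathcal{A}_n$ is abelian, so inner automorphisms coming from $\mathbb{Z}\mathcal{A}_n$ act trivially on its own cohomology. Combined with the given $\mathrm{S}_n$-module structure on $M$, this induces the correct $\mathrm{S}_n$-action on the coefficient system $H^q(\mathbb{Z}\mathcal{A}_n; M)$.

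The step I expect to be the main (though modest) obstacle is verifying the vanishing hypothesis rigorously: I must confirm that for each fixed $q$, the module $H^q(\mathbb{Z}\mathcal{A}_n; M)$ is genuinely a $\mathbb{Q}[\mathrm{S}_n]$-module so that Maschke's theorem applies and higher $\mathrm{S}_n$-cohomology vanishes. Since $M$ is a $\mathbb{Q}[\mathrm{S}_n]$-module and $\mathbb{Z}\mathcal{A}_n$ acts $\mathbb{Q}$-linearly, the cohomology $H^q(\mathbb{Z}\mathcal{A}_n; M)$ is a $\mathbb{Q}$-vector space carrying a linear $\mathrm{S}_n$-action, so this is automatic. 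Everything else is then formal degeneration of the spectral sequence, and taking $\mathrm{S}_n$-invariants on the $p=0$ edge.
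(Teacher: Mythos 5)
Your proof is correct and follows essentially the same route as the paper: the short exact sequence $1 \to \Z\Ar_n \to \Grp_n \to \W_n \to 1$, the Lyndon--Hochschild--Serre spectral sequence, and the vanishing of $H^p(\W_n;-)$ for $p>0$ on rational coefficient modules, leaving only the $p=0$ column of invariants. Your extra remarks on the well-definedness of the $\W_n$-action and the $\Q\W_n$-module structure are sound points that the paper leaves implicit.
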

\begin{proof}
The group $\Grp_n$ fits in the short exact sequence
$$
1 \to \Z\Ar_n \to \Grp_n \to \W_n \to 1.
$$
Hence the spectral sequence
$$
E_2^{p,q} = H^p(\W_n; H^q(\Z\Ar_n; M))
$$
for $p+q = r$ converges to 
the group $H^r(\Grp_n; M)$.
The group $W_n$ is finite and the module $M$ is divisible: using classical results in group cohomology (see for example \cite[Cor.~II,~5.4]{ADEMMILGRAM}) this implies that $H^p(\W_n; H^q(\Z\Ar_n; M))=0$ for $p>0$. 
Finally, since $H^0(\W_n; H^q(\Z\Ar_n; M)) = H^q(\Z\Ar_n; M)^{\W_n}$ we obtain the claim.
\end{proof}
\begin{remark}
	The cohomology ring $H^{\bullet}(\Z\Ar_n; \Q)$ is the exterior algebra freely generated by $$H^1(\Z\Ar_n;\Q) = \Hom(\Z\Ar_n;\Q) \simeq \Q\Ar_n \simeq  \Q^{\binom{n}{2}}$$ in degree $1$. The group $\W_n$ acts on the algebra $H^{\bullet}(\Z\Ar_n; \Q)$ as an automorphism group mapping the degree 1 elements as follows:
a permutation $\sigma \in \W_n$ maps the generator $\omega_{ij}$ to
\begin{equation}\label{eq:sigma_action}
\sigma(\omega_{ij}) = 
\left\{
\begin{array}{ll}
\omega_{\sigma(i),\sigma(j)}  & \mbox{ if } \sigma(i) <\sigma(j) \\
\omega_{\sigma(j),\sigma(i)} & \mbox{ otherwise.}
\end{array}
\right.
\end{equation}
\end{remark}
\begin{definition}
	We write $\full_n$ for the full graph with vertices $\{1, \ldots, n\}.$
\end{definition}
\begin{definition} \label{def:mu_delta}
For a graph $\Delta \subset \full_n$, let $e_1=(i_1, j_1), \ldots, e_k=(i_k, j_k)$ be the list of edges of $\Delta$ ordered lexicographically.
We define 
\begin{equation}\label{eq:def_mu_Delta}
	\mu_\Delta:=\omega_{i_1j_1} \cdots \omega_{i_kj_k} \in H^k(\Z\Ar_n; \Q).
\end{equation}
\end{definition}

\subsection{Combinatorial description of $H^{\bullet}(\Gamma_n;\Q)$}
\label{sect:combHGammanQ}
We consider the natural action of $\W_n$ on the subgraphs of $\full_n$.
Given a graph $\Delta \subset \full_n$ we can decompose the group $\W_n$ as a union of the cosets $C_1(\Delta), \ldots, C_s(\Delta)$ of $\StabWn(\Delta)$.

\begin{definition}We set the following notations:
\begin{enumerate}[label=\alph*)]
\item We denote by $\mathcal{G}$ the set of isomorphism classes of finite graphs. 

\item For a given graph $\Delta$ we write $[\Delta]$ for its isomorphism class in $\mathcal{G}$.

\item We say that $\Delta$ is an \emph{invariant graph} if all the automorphisms of the graph induce even permutations on the set of edges.

\item Let $\Graphs \subset \mathcal{G}$ be the set of (isomorphism classes of) invariant graphs.

\item Define $\Graphs_n$ as the subset of $\Graphs$ of graphs with exactly $n$ vertices (allowing isolated vertices). 
\end{enumerate}
\end{definition}

For every isomorphism class $\graph \in \mathcal{G}$ of a graph with $n$ vertices and without isolated vertices we choose once and for all a representative $\Delta_\graph \subset \full_n$ of $\graph$.
This is equivalent to say that we fix 
a total ordering of the vertices of $\graph$. 
For an arbitrary isomorphism class $\graph$, we consider the isomorphism class of the maximal subgraph $\graph_0$ without isolated points, and choose as a 
representative $\Delta_\graph$ of $\graph$ the subgraph of $\Pi_{|V\graph|}$ that coincides with the representative of  $\graph_0$
on $[1, |V\graph_0|]$.

We write $(i,j)$ with $i<j$ for the edge with set of vertices $\{i,j\}$.

\begin{theorem}\label{thm:generators}
The set of cohomology classes
\begin{equation}\label{eq:alpha_graph}
	\alpha_\graph:= \frac{1}{\StabWn(\Delta_\graph)}\sum_{\sigma \in \W_n} \sigma(\mu_{\Delta_\graph}) \in H^{|ED|}(\Z\Ar_n; \Q)
\end{equation}
for $\graph \in \Graphs_n$  is a basis of $H^\bullet(\Z\Ar_n; \Q)^{\W_n}$ over $\Q$.
\end{theorem}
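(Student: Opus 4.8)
The plan is to use the distinguished $\Q$-basis $\{\mu_\Delta : \Delta \subset \full_n\}$ of the exterior algebra $H^\bullet(\Z\Ar_n;\Q)$, indexed by all subgraphs $\Delta$ on the fixed vertex set $\{1,\dots,n\}$ (isolated vertices allowed), and to exploit the fact that $\W_n$ permutes this basis \emph{up to sign}. Indeed, by \eqref{eq:sigma_action} one has $\sigma(\mu_\Delta) = \pm\, \mu_{\sigma(\Delta)}$ for every $\sigma \in \W_n$, the sign being the signature of the permutation needed to reorder the edges of $\sigma(\Delta)$ lexicographically. Two subgraphs lie in the same $\W_n$-orbit exactly when they are isomorphic as graphs on $n$ vertices, so the orbits are indexed by the isomorphism classes $\graph$ of such graphs, and everything is compatible with the grading by number of edges.

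First I would split the representation along these orbits. For a class $\graph$ with chosen representative $\Delta_\graph$ and stabilizer $H := \StabWn(\Delta_\graph)$, let $V_\graph$ be the span of the $\mu_{\Delta'}$ with $\Delta' \cong \graph$. Each $V_\graph$ is a $\W_n$-subrepresentation and $H^\bullet(\Z\Ar_n;\Q) = \bigoplus_\graph V_\graph$, hence $H^\bullet(\Z\Ar_n;\Q)^{\W_n} = \bigoplus_\graph V_\graph^{\W_n}$. The line $\Q\,\mu_{\Delta_\graph}$ is $H$-stable, with $H$ acting through the character $\chi_\graph\colon \tau \mapsto \text{(signature of the permutation $\tau$ induces on the edges of $\Delta_\graph$)}$; permutations of the isolated vertices lie in the kernel of $\chi_\graph$, so $\chi_\graph$ only records the action of automorphisms of the non-isolated part. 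By construction $V_\graph \simeq \Ind_H^{\W_n}(\chi_\graph)$ as $\W_n$-representations.

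Next I would compute each summand by Frobenius reciprocity:
\[
\dim_\Q V_\graph^{\W_n}
= \dim_\Q \Hom_{\W_n}\bigl(\Q,\ \Ind_H^{\W_n}(\chi_\graph)\bigr)
= \dim_\Q \Hom_H(\Q,\ \chi_\graph),
\]
which equals $1$ when $\chi_\graph$ is trivial and $0$ otherwise. By Definition~\ref{def:mu_delta} and the definition of $\Graphs_n$, triviality of $\chi_\graph$ means exactly that every automorphism of the graph induces an even permutation of its edges, i.e. that $\graph \in \Graphs_n$. Thus $V_\graph^{\W_n}$ is a line for $\graph \in \Graphs_n$ and vanishes otherwise, and summing over orbits already gives $\dim_\Q H^\bullet(\Z\Ar_n;\Q)^{\W_n} = |\Graphs_n|$.

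It then remains to identify the generator of each nonzero line with $\alpha_\graph$. Grouping the sum in \eqref{eq:alpha_graph} by cosets of $H$ and using $\tau(\mu_{\Delta_\graph}) = \chi_\graph(\tau)\,\mu_{\Delta_\graph}$ for $\tau \in H$ yields
\[
\sum_{\sigma \in \W_n} \sigma(\mu_{\Delta_\graph})
= \Bigl(\sum_{\tau \in H} \chi_\graph(\tau)\Bigr)\ \sum_{i} \sigma_i(\mu_{\Delta_\graph}),
\]
where the $\sigma_i$ run over coset representatives of $H$ in $\W_n$. When $\graph \in \Graphs_n$ the inner sum equals $|H| = |\StabWn(\Delta_\graph)|$, so after the normalization in \eqref{eq:alpha_graph} we get $\alpha_\graph = \sum_i \sigma_i(\mu_{\Delta_\graph})$, a $\pm 1$-combination of the basis vectors of the orbit $V_\graph$, hence a nonzero element of the one-dimensional space $V_\graph^{\W_n}$ and therefore a generator. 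Distinct classes $\graph$ have disjoint supports in $\{\mu_\Delta\}$, so the $\alpha_\graph$ are automatically linearly independent, and together with the dimension count they form a basis. The main obstacle I expect is not conceptual but bookkeeping: pinning down that the signed $\W_n$-action genuinely realizes the induced representation $\Ind_H^{\W_n}(\chi_\graph)$, and checking carefully that the contribution of isolated vertices to the stabilizer is invisible to $\chi_\graph$, so that triviality of $\chi_\graph$ matches the combinatorial \emph{invariant graph} condition precisely.
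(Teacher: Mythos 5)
Your proof is correct, and it is worth noting how it relates to the paper's own argument. Both proofs rest on the same two combinatorial facts: that $\sigma(\mu_\Delta) = \pm\,\mu_{\sigma(\Delta)}$, and that $\StabWn(\Delta_\graph)$ acts on the line $\Q\,\mu_{\Delta_\graph}$ through the character $\chi_\graph = \sgn_{\Delta_\graph}$, which is trivial precisely when $\graph \in \Graphs_n$. The difference is in how spanning is obtained. The paper works directly with the averaging operator $\pi_{\W_n}$: since $\pi_{\W_n}$ surjects onto $H^\bullet(\Z\Ar_n;\Q)^{\W_n}$ and the monomials $\mu_\Delta$ span the whole algebra, the nonzero projections $\pi_{\W_n}(\mu_\Delta)$ span the invariants; these projections vanish exactly when the stabilizer contains an element acting by $-1$ (i.e.\ $\graph \notin \Graphs_n$), are nonzero multiples of $\alpha_\graph$ otherwise, and are independent by disjointness of supports. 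You instead decompose the algebra as $\bigoplus_\graph V_\graph$ over $\W_n$-orbits, identify each $V_\graph$ with $\Ind_H^{\W_n}(\chi_\graph)$, and get $\dim V_\graph^{\W_n} \in \{0,1\}$ from Frobenius reciprocity before exhibiting $\alpha_\graph$ as a generator of each nonzero line. The two routes are logically equivalent --- Frobenius reciprocity for the trivial representation is exactly the statement that orbit-wise averaging detects invariants --- but yours makes the orbit-by-orbit dimension count explicit, at the cost of the extra verification (which you correctly flag) that the signed permutation action on $V_\graph$ really realizes the induced representation; the paper's projector argument sidesteps induction entirely and is correspondingly shorter.
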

\begin{proof}
Note that if $\graph \in \mathcal{D}$ then the group $\StabWn(\Delta_\graph)$ acts trivially on $\mu_{\Delta_\graph}$. Hence the average $\pi_{\W_n}(\mu_{\Delta_\graph}) = \dfrac{1}{|\W_n|}\sum_{\sigma \in \W_n} \sigma(\mu_{\Delta_\graph}) = \dfrac{|\StabWn(\Delta_\graph)|}{n!}  \alpha_{\graph}$ is a non-zero $\W_n$-invariant. If $\graph \notin \mathcal{D}$ then there is a permutation $\sigma \in \W_n$ such that $\sigma(\mu_{\Delta_\graph}) = -\mu_{\Delta_\graph}$ and hence the average of $\mu_{\Delta_\graph}$ is zero.
The Theorem follows since $\pi_{\W_n}$ is a projection from $H^\bullet(\Z\Ar_n; \Q)$ to $H^\bullet(\Z\Ar_n; \Q)^{\W_n}$ and the elements $\alpha_\graph$ for $\graph \in \mathcal{D}$ are linearly independent.
\end{proof}

We can map a graph of $\mathcal{D}_n$ to graph of $\mathcal{D}_{n+1}$
by adding an isolated vertex. With this identification we can write  $\mathcal{D}_n \subset \mathcal{D}_{n+1}$ and $\mathcal{D} = \cup_n \mathcal{D}_n$.

\begin{proposition}\label{prop:stab_untwisted}
The natural morphism $\Gamma_n \hookrightarrow \Gamma_{n+1}$ induces a map $H^{\bullet}(\Gamma_{n+1};\Q) \to H^{\bullet}(\Gamma_{n};\Q)$.
The map send the class $\alpha_{\graph} \in H^{\bullet}(\Gamma_{n+1};\Q)$  to the corresponding class $\alpha_{\graph} \in H^{\bullet}(\Gamma_{n};\Q)$ when  $\graph \in \Graphs_{n}$, otherwise to the zero class.

Since every graph with $r$ edges without isolated vertices has at most $2r$ vertices
we get that $H^r(\Gamma_n;\Q)$ stabilizes for $r \geq 2n$. 
\end{proposition}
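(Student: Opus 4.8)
The plan is to push everything through the identification $H^\bullet(\Gamma_n;\Q)=H^\bullet(\Z\Ar_n;\Q)^{\W_n}$ of Proposition~\ref{prop:main} applied to the trivial module $M=\Q$. First I would observe that the strand inclusion $\Br_n\hookrightarrow\Br_{n+1}$ sends $\Pure_n$ into $\Pure_{n+1}$ and $[\Pure_n,\Pure_n]$ into $[\Pure_{n+1},\Pure_{n+1}]$, so it defines a morphism between the short exact sequences $1\to\Z\Ar_n\to\Gamma_n\to\W_n\to1$ and $1\to\Z\Ar_{n+1}\to\Gamma_{n+1}\to\W_{n+1}\to1$, with $\W_n\hookrightarrow\W_{n+1}$ the stabiliser of the letter $n+1$. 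Functoriality of the Lyndon--Hochschild--Serre spectral sequence, which in both cases degenerates onto its bottom row by the vanishing used in Proposition~\ref{prop:main}, then identifies $H^\bullet(\Gamma_{n+1};\Q)\to H^\bullet(\Gamma_n;\Q)$ with the restriction to invariants of the map $r\colon H^\bullet(\Z\Ar_{n+1};\Q)\to H^\bullet(\Z\Ar_n;\Q)$ induced on exterior algebras.

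Next I would make $r$ explicit. As $\Z\Ar_\bullet$ is the first homology of the pure braid group and the inclusion carries the meridian of $\{z_i=z_j\}$ to the meridian with the same label, the dual map on $H^1$ is $r(\omega_{ij})=\omega_{ij}$ for $j\le n$ and $r(\omega_{i,n+1})=0$, and it is manifestly $\W_n$-equivariant. Therefore $r(\mu_\Delta)=\mu_\Delta$ if no edge of $\Delta$ is incident to the vertex $n+1$, and $r(\mu_\Delta)=0$ otherwise.

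The computation then rests on rewriting the basis classes as honest sums over subgraphs: unwinding the stabiliser normalisation in \eqref{eq:alpha_graph} (as in the proof of Theorem~\ref{thm:generators}, where invariance guarantees that $\sigma(\mu_{\Delta_\graph})$ depends only on the subgraph $\sigma(\Delta_\graph)$) gives
$$\alpha_\graph=\sum_{\Delta\subset\full_{n+1},\ \Delta\cong\graph}\mu_\Delta,$$
the sum running over all subgraphs of the full graph lying in the class $\graph$. Applying $r$ keeps precisely the summands whose edge set avoids $n+1$. If $\graph\in\Graphs_{n+1}$ has no isolated vertex, every copy of $\graph$ occupies all $n+1$ vertices and hence meets $n+1$, so every summand is killed and $r(\alpha_\graph)=0$; if instead $\graph\in\Graphs_n$, the surviving copies are exactly the subgraphs of $\full_n$ in the class of $\graph$, whose sum is the corresponding class $\alpha_\graph\in H^\bullet(\Gamma_n;\Q)$. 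This yields the stated values of the map on the two bases.

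Finally the stabilisation drops out by a dimension count. In degree $r$ the map carries the basis indexed by $\graph\in\Graphs_{n+1}$ with $r$ edges onto the basis indexed by $\graph\in\Graphs_n$ with $r$ edges together with zero, and its kernel is spanned by the $\alpha_\graph$ attached to invariant graphs with $r$ edges and no isolated vertex. Such a graph has each of its $n+1$ vertices incident to an edge, whence $n+1\le 2r$; so as soon as $n+1>2r$ the kernel is trivial and $H^r(\Gamma_{n+1};\Q)\to H^r(\Gamma_n;\Q)$ is an isomorphism, which is the asserted stability. I expect the only genuinely delicate point to be the sign coherence underpinning the displayed rewriting of $\alpha_\graph$---that for an invariant graph $\sigma(\mu_{\Delta_\graph})$ is unambiguously $\mu_{\sigma(\Delta_\graph)}$ independently of the chosen $\sigma$---which is exactly where the hypothesis $\graph\in\Graphs$ is needed; once this is in place the identification of $r$ and the counting are routine.
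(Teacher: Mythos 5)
Your proposal is correct in structure and takes essentially the paper's route: the paper's one-sentence proof is precisely your verification that the restriction $H^\bullet(\Z\Ar_{n+1};\Q)\to H^\bullet(\Z\Ar_n;\Q)$ (which sends $\omega_{ij}\mapsto\omega_{ij}$ for $j\le n$ and $\omega_{i,n+1}\mapsto 0$, and which you justify more carefully via functoriality of the Lyndon--Hochschild--Serre spectral sequence) kills every summand of $\alpha_\graph$ supported on an edge at $n+1$ and reassembles the rest into $\alpha_\graph\in H^\bullet(\Gamma_n;\Q)$. Your surjectivity-plus-kernel count is also fine, and your conclusion that $H^r(\Gamma_{n+1};\Q)\to H^r(\Gamma_n;\Q)$ is an isomorphism once $n\ge 2r$ is the intended statement (the inequality ``$r\geq 2n$'' printed in the Proposition is evidently a typo for $n\geq 2r$).

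There is, however, one localized misstatement exactly at the point you flag as delicate. The unsigned rewriting $\alpha_\graph=\sum_{\Delta\subset\full_{n+1},\,\Delta\cong\graph}\mu_\Delta$, and your closing claim that for invariant $\graph$ one has $\sigma(\mu_{\Delta_\graph})=\mu_{\sigma(\Delta_\graph)}$ ``unambiguously'', are false as stated: invariance guarantees only that $\sigma(\mu_{\Delta_\graph})$ depends on $\sigma$ through the subgraph $\sigma(\Delta_\graph)$ alone, so the correct formula is $\alpha_\graph=\sum_{\Delta'\cong\graph} c(\Delta')\,\mu_{\Delta'}$ with a well-defined sign $c(\Delta')\in\{\pm 1\}$ that need not equal $+1$. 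Concretely, take the invariant path with edges $(1,2),(2,3),(3,4),(4,5)$ (the paper's basis of $H^4$) and $\sigma=(1\,2\,3\,4\,5)$: the image has lexicographically ordered edges $(1,5),(2,3),(3,4),(4,5)$, and relative to the two lexicographic orders the induced edge permutation is the $4$-cycle sending $e_1\mapsto e_2'$, $e_2\mapsto e_3'$, $e_3\mapsto e_4'$, $e_4\mapsto e_1'$, which is odd, so $\sigma(\mu_{\Delta_\graph})=-\mu_{\sigma(\Delta_\graph)}$. The slip is harmless for the argument: restriction kills or preserves each signed summand $c(\Delta')\mu_{\Delta'}$ according to whether $\Delta'$ has an edge at $n+1$, and for $\graph\in\Graphs_n$ the surviving signs agree with those defining $\alpha_\graph$ over $\W_n$, because any $\sigma\in\W_{n+1}$ with $\sigma(\Delta_\graph)\subset\full_n$ has the same image as some $\tau\in\W_n$, and $\tau^{-1}\sigma\in\Stab_{\W_{n+1}}(\Delta_\graph)$ acts trivially on $\mu_{\Delta_\graph}$ by invariance; here one also uses the paper's convention that the chosen representative of a class with isolated vertices extends the chosen representative of its core, so the two meanings of $\Delta_\graph$ coincide. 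With this sign repair your proof is complete and agrees with the paper's.
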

\begin{proof}
	It is enough to check that if $\graph \notin \Graphs_{n}$ then all the summand of $\alpha_{\graph}$ contains a factor $\omega_{i,n+1}$ and hence $\alpha_{\graph}$ restricts to zero in $H^{\bullet}(\Gamma_{n};\Q)$, while if $\graph \in \Graphs_{n}$ then $\alpha_{\graph} \in H^{\bullet}(\Gamma_{n+1};\Q)$ restricts to $\alpha_{\graph} \in H^{\bullet}(\Gamma_{n};\Q)$.
\end{proof}
As a consequence of the Proposition above, the series $\sum \dim H^r(\Gamma_n;\Q)t^r$ has for limit when $n \to \infty$ the Poincar\'e series of the colimit $\Gamma_{\infty}$ of the embeddings $\Gamma_n \into \Gamma_{n+1}$, 
whose $r$-th Betti number is then
equal to the number of invariant graphs with $r$ edges, without isolated vertices.

We can actually say something more about the stable Poincaré series. The proof of following lemma is straightforward.
\begin{lemma}
A graph $\Delta$ is invariant if and only if the following conditions hold:
\begin{enumerate}[label=\alph*)]
	\item the connected components of $\Delta$ are invariant graphs;
	\item any  two distinct connected components of $\Delta$ with odd edges are not isomorphic.
\end{enumerate}
\end{lemma}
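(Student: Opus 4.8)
The plan is to analyze the structure of the automorphism group $\operatorname{Aut}(\Delta)$ and compute directly the sign of the permutation it induces on the edge set $E(\Delta)$. First I would group the connected components of $\Delta$ by isomorphism type: if $C_1, \ldots, C_m$ are representatives of the distinct isomorphism classes of components occurring in $\Delta$, with $C_i$ appearing with multiplicity $k_i$ and having $e_i$ edges, then it is standard that
$$
\operatorname{Aut}(\Delta) \;\cong\; \prod_{i=1}^m \bigl( \operatorname{Aut}(C_i) \wr S_{k_i} \bigr),
$$
since any automorphism must send each component to an isomorphic one. The induced action on $E(\Delta)$ respects this decomposition: the $S_{k_i}$ factor permutes the $k_i$ blocks of $e_i$ edges coming from the copies of $C_i$, while the $\operatorname{Aut}(C_i)$ factors act inside each block.

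Next I would compute the edge-sign of a general element. Since the sign is a homomorphism, the total sign factors over the isomorphism types, and within each type over the two pieces of the wreath product. The within-block piece contributes $\prod \operatorname{sgn}(\phi)$, where $\phi$ ranges over the chosen automorphisms of the copies of $C_i$ acting on their $e_i$ edges. The block-permuting piece is the key computation: a permutation $\pi \in S_{k_i}$ acting on $k_i$ blocks of size $e_i$ (preserving the internal labelling of each block) induces a permutation of sign $(\operatorname{sgn}\pi)^{e_i}$, as one checks on transpositions, a swap of two blocks of size $e_i$ being a product of $e_i$ transpositions. Hence the sign of the edge-permutation of an arbitrary automorphism is
$$
\prod_{i=1}^m \Bigl( (\operatorname{sgn}\pi_i)^{e_i} \prod_{\ell=1}^{k_i} \operatorname{sgn}(\phi_{i,\ell}) \Bigr).
$$

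Then I would read off the two conditions. Setting all $\pi_i = \mathrm{id}$ and all but one $\phi_{i,\ell}$ equal to the identity, invariance of $\Delta$ forces $\operatorname{sgn}(\phi) = 1$ for every $\phi \in \operatorname{Aut}(C_i)$ and every $i$; this is exactly condition (a), that each connected component is itself invariant. Granting (a), all within-block factors are $+1$, and invariance reduces to requiring $(\operatorname{sgn}\pi_i)^{e_i} = 1$ for every $\pi_i \in S_{k_i}$. When $e_i$ is even this holds automatically, while when $e_i$ is odd it holds for all $\pi_i$ if and only if $S_{k_i}$ contains no odd permutation, i.e.\ if and only if $k_i \leq 1$. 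This last condition says precisely that no isomorphism type with an odd number of edges occurs more than once, which is condition (b). Conversely, if (a) and (b) hold, the displayed sign is identically $+1$, so $\Delta$ is invariant.

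The only mildly delicate point is the block-permutation sign $(\operatorname{sgn}\pi)^{e}$; everything else is a bookkeeping consequence of the multiplicativity of $\operatorname{sgn}$, which is why the proof is indeed straightforward. Note that isolated vertices are handled uniformly, being components with $e_i = 0$ and trivial automorphism group, so they impose no constraint.
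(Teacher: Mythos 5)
Your proof is correct. The paper in fact omits the argument entirely (it only declares the lemma ``straightforward''), and what you give is precisely the intended standard argument, carried out completely: the decomposition $\operatorname{Aut}(\Delta)\cong\prod_i\bigl(\operatorname{Aut}(C_i)\wr S_{k_i}\bigr)$, the key block-swap sign $(\operatorname{sgn}\pi)^{e_i}$ verified on transpositions, and the correct handling of isolated vertices as components with $e_i=0$.
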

\begin{corollary}
The cohomology group $H^\bullet(\Gamma_n;\Q)$ is isomorphic to the tensor product of a polinomial algebra generated by connected invariant graphs with even edges and an exterior algebra generated by connected invariant graphs with odd edges. The degree of each generator equals the number of edges of the graph.

The Poincar\'e series of $\Gamma_{\infty}$ decomposes as follows:
\begin{equation}\label{eq:poinc}\sum \dim H^r(\Gamma_\infty;\Q)t^r= \left(\prod_{\substack{\graph \in \Graphs\\ \graph \mbox{ connected}\\|E\graph| \mbox{ odd}}} (1+t^{|E\graph|})\right)\cdot \left(\prod_{\substack{\graph \in \Graphs\\ \graph \mbox{ connected}\\|E\graph| \mbox{ even}}} (1+t^{|E\graph|})\right)^{-1}.\end{equation}
\end{corollary}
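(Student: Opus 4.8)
The plan is to identify the stable ring $H^\bullet(\Gamma_\infty;\Q)=\bigoplus_n H^\bullet(\Z\Ar_n;\Q)^{\W_n}$ with the free graded-commutative $\Q$-algebra on the classes attached to the \emph{connected} invariant graphs, and then to read off the Poincaré series mechanically. By Theorem~\ref{thm:generators} together with Proposition~\ref{prop:stab_untwisted}, the classes $\alpha_\graph$ for $\graph\in\Graphs$ without isolated vertices form a basis of this ring; since it is a subalgebra of the exterior algebra $\Lambda[\omega_{ij}]$ it is graded-commutative, so over $\Q$ any generator $\alpha_\graph$ of odd degree automatically satisfies $\alpha_\graph^2=0$. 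Let $A$ be the tensor product of the polynomial algebra on symbols $x_\graph$ for $\graph\in\Graphs$ connected with $|E\graph|$ even and the exterior algebra on symbols $x_\graph$ for $\graph\in\Graphs$ connected with $|E\graph|$ odd, each $x_\graph$ in degree $|E\graph|$. Since the target is graded-commutative, the assignment $x_\graph\mapsto\alpha_\graph$ extends to a well-defined algebra map $\phi\colon A\to H^\bullet(\Gamma_\infty;\Q)$, and the goal is to prove $\phi$ is an isomorphism.

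First I would record the multiplicative behaviour of the $\alpha$'s. Because $\graph$ is invariant, $\StabWn(\Delta_\graph)$ fixes $\mu_{\Delta_\graph}$, so unwinding the averaging in \eqref{eq:alpha_graph} gives exactly the signed orbit sum $\alpha_\graph=\sum_{\Delta'}\eps(\Delta')\,\mu_{\Delta'}$ over the symmetric-group orbit of $\Delta_\graph$, where $\eps(\Delta')\in\{\pm1\}$ is the sign comparing the lexicographic edge order on $\Delta'$ with the transported order (well defined precisely because every automorphism acts by an even edge-permutation). Expanding a product $\alpha_{\graph_1}\cdots\alpha_{\graph_m}$, every summand in which two placed subgraphs share an edge vanishes in the exterior algebra, and the surviving summands, all of degree $\sum_i|E\graph_i|$, are indexed by edge-disjoint placements. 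Filtering by the number of non-isolated vertices, the placements on \emph{pairwise vertex-disjoint} supports carry the maximal vertex count, and summing exactly these reproduces the signed orbit sum of $\graph_1\sqcup\cdots\sqcup\graph_m$; hence
\[
\alpha_{\graph_1}\cdots\alpha_{\graph_m}=\lambda\,\alpha_{\graph_1\sqcup\cdots\sqcup\graph_m}+(\text{terms supported on strictly fewer vertices}),
\]
with the convention that the leading term is $0$ when the disjoint union is not invariant. The main obstacle is to verify that $\lambda\neq0$, i.e. that the vertex-disjoint contributions do not cancel: since each $\graph_i$ is connected, in any vertex-disjoint placement the blocks are exactly the connected components of the result, so the placements yielding a fixed representative of $\graph_1\sqcup\cdots\sqcup\graph_m$ correspond to the isomorphism-type-preserving matchings of its components with the factors, and the coherence of the signs $\eps$ forces all these contributions to carry the same sign. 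This is the only point requiring a genuine computation, namely tracking the reordering signs of the edges.

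It then remains to compare bases. An admissible monomial $\prod_\graph x_\graph^{m_\graph}$ of $A$ (with $m_\graph\in\{0,1\}$ for $\graph$ odd and $m_\graph\ge0$ for $\graph$ even) corresponds to the disjoint union $\bigsqcup_\graph m_\graph\,\graph$; by the Lemma this union is invariant exactly when no odd-edged component is repeated, which is precisely the admissibility condition defining the monomial basis of $A$. Thus admissible monomials biject with $\Graphs$ (without isolated vertices) via their connected-component decomposition, and by the displayed formula $\phi$ sends such a monomial to $\alpha_{\bigsqcup_\graph m_\graph\graph}$ plus terms on strictly fewer vertices. Ordering monomials by total vertex number, the matrix of $\{\phi(x^{\mathbf m})\}$ in the basis $\{\alpha_\graph\}$ is triangular with nonzero diagonal, so $\phi$ carries a basis to a basis and is an isomorphism of graded algebras, which is the first assertion (for $\Gamma_\infty$; for finite $n$ one reads off the graded pieces, the products that would leave $\{1,\dots,n\}$ simply vanishing). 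Finally, the Poincaré series of a free graded-commutative algebra is the product over generators: a connected invariant graph with $|E\graph|$ odd is an exterior generator contributing $(1+t^{|E\graph|})$, while one with $|E\graph|$ even is a polynomial generator contributing $\sum_{k\ge0}t^{k|E\graph|}=(1-t^{|E\graph|})^{-1}$, and multiplying these over all connected invariant graphs yields the product decomposition \eqref{eq:poinc} of the stable Poincaré series, the even factors being the geometric-series contributions $(1-t^{|E\graph|})^{-1}$ of the polynomial generators.
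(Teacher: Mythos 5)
Your proposal takes a genuinely different --- and much more ambitious --- route than the paper. The paper offers no written proof because the corollary is meant as a pure counting consequence of the preceding Lemma: by Theorem~\ref{thm:generators} and Proposition~\ref{prop:stab_untwisted} the stable cohomology has a basis indexed by invariant graphs without isolated vertices, and the Lemma identifies these with disjoint unions of connected invariant graphs in which no odd-edged component is repeated; counting such multisets gives both the tensor-product description and the product formula, with no sign analysis at all. Crucially, the remark immediately following the corollary stresses that this isomorphism is \emph{not} one of cup-product rings but only of the associated graded algebra for the filtration by number of connected components. You instead set out to prove that the actual ring $H^\bullet(\Gamma_\infty;\Q)$ is free graded-commutative on the connected classes, via a triangularity argument for that same filtration.

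That stronger route has genuine gaps. First, its crux --- that all vertex-disjoint placements contributing to the leading coefficient $\lambda$ carry the same sign --- is asserted (``the coherence of the signs $\eps$ forces\dots'') but never verified, and it is not automatic: when two isomorphic \emph{odd}-edged factors are matched to each other's components, the two matchings carry opposite signs (this is exactly why such unions fail to be invariant), so sign coherence holds only for admissible monomials and requires the parity computation you defer (a swap of two isomorphic blocks with $k$ edges contributes $(-1)^{k^2}$, which is $+1$ precisely when $k$ is even). Since both the basis comparison and the Poincar\'e series rest on $\lambda\neq 0$, the proof is incomplete as written. Second, your treatment of finite $n$ --- ``the products that would leave $\{1,\dots,n\}$ simply vanishing'' --- is false: by Theorem~\ref{theo:product} such a product equals a sum over invariant graphs on $n$ vertices in which the two subgraphs \emph{share} vertices, and these lower-filtration terms need not vanish; this is precisely the phenomenon the paper's remark warns about, and it is why the corollary's ring-theoretic content for finite $n$ is only about the associated graded. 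Finally, the series you derive,
\[
\prod_{\substack{\graph\ \mathrm{connected}\\ |E\graph|\ \mathrm{odd}}}(1+t^{|E\graph|})\cdot\prod_{\substack{\graph\ \mathrm{connected}\\ |E\graph|\ \mathrm{even}}}\bigl(1-t^{|E\graph|}\bigr)^{-1},
\]
is not literally the right-hand side of \eqref{eq:poinc}, whose printed even factors are $(1+t^{|E\graph|})^{-1}$. Your version is in fact the correct one (the printed formula would give coefficient $-1$ for $t^4$, contradicting the stable series $1+t+t^4+\cdots$), so the discrepancy is a sign typo in the paper; but you should have flagged it rather than silently identifying the two expressions.
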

This description does not give an account of the actual multiplicative structure of the cohomology ring $H^\bullet(\Gamma_n;\Q)$, that will be investigated in Theorem \ref{theo:product}, but only of the multiplicative structure of the graded algebra associated to the filtration defined by the number of connected components of a graph.

\subsection{Combinatorial description of the twisted cohomology group $H^{\bullet}(\Gamma_n;\qsgn)$}
\begin{definition}
	We write $\sgn(\sigma)$ for the sign of a permutation $\sigma \in \W_n$ and let $\qsgn$ be the 1-dimensional sign rational representation of $\W_n$ extended to $\Gamma_n$.
\end{definition}
The cohomology group $H^{\bullet}(\Z \Ar_n; \qsgn)$
is naturally identified, as an $\W_n$-representation, with the tensor product of the exterior algebra on
the $\Q \W_n$-module $\Q \Ar_n$
and $\qsgn$. That is
\begin{equation}\label{eq:skew_cohom}
H^{r}(\Z \Ar_n; \qsgn) \simeq \qsgn
\otimes H^{r}(\Z \Ar_n; \Q).
\end{equation}

Here  we provide a combinatorial description of a basis of the subspace of $\W_n$-invariant elements.

\begin{remark}
We note that since $\qsgn$ is a trivial $\Z \Ar_n$-representation, the cohomology groups $H^\bullet(\Z \Ar_n; \Q)$ and $H^\bullet(\Z \Ar_n; \qsgn)$ are isomorphic as graded vector spaces.
These groups differs only as $\W_n$-representation and as $\Gamma_n$-representations.
\end{remark}

Therefore we can extend the notation of Definition \ref{def:mu_delta} to the group $H^\bullet(\Z \Ar_n; \qsgn)$: for a graph $\Delta \subset \full_n$ we allow to write $\mu_\Delta \in H^\bullet(\Z \Ar_n; \qsgn)$ for the product of the ordered set of generator $\omega_{ij}$ corresponding to the edges of $\Delta$.

\begin{remark}
The group $H^\bullet(\Z \Ar_n; \Q)$ has a natural ring structure given by cup product, and this structure is clearly compatible with the structure of trivial $\W_n$-module. On the other side, since $\qsgn \otimes \qsgn = \Q$ is the trivial $\W_n$ representation, the ring structure of $H^\bullet(\Z \Ar_n; \qsgn)$ given by cup product is not compatible with the structure of $\W_n$-module. Hence it will be natural to consider a different product structure (see Section  \ref{sect:multiplicative_struct}) that maps
$$
H^i(\Z \Ar_n; \qsgn) \otimes H^j(\Z \Ar_n; \qsgn) \to H^{i+j}(\Z \Ar_n; \Q).
$$ 
\end{remark}

\begin{definition}
Let $\Delta$ be a graph with $n$ vertices. For any automorphism $\sigma$ of $\Delta$ we identify $\sigma$ with the corresponding permutation of the set of vertices of $\Delta$ and we write $\sigma_{E\Delta}$ for the induced permutation of the set of edges of $\Delta$.

More generally, let $\Delta, \Delta' \subset \full_n$ be two isomorphic graphs. Let $e_1, \ldots, e_k$ be the ordered list of edges of $\Delta$ and $e_1', \ldots, e_k'$ the ordered list of edges of $\Delta'$. Let $\sigma \in \W_n$ be a permutation such that $\sigma(\Delta) = \Delta'.$ We write $\sigma_{E\Delta} \in \W_k$ for the permutation induced  by $\sigma$ on the set of the edges, that gives
$\sigma(e_i) = e_{\sigma_{E\Delta}(i)}',$ for $i \in 1, \ldots, k$.
\end{definition}

\begin{definition}
	Given a permutation $\sigma \in \W_n$ that induces an automorphism on the graph $\Delta$ we introduce the following notation for the sign of the induced permutation $\sigma_{E\Delta}$ on the set of edges: $$\sgn_{\Delta}(\sigma) := {\sgn(\sigma_{E\Delta})}.$$
\end{definition}

\begin{definition}
We define $\Graphs^{\sgn}_n$ to be the set of isomorphism classes of graphs with exactly $n$ vertices
such that any automorphism $\sigma$ of the graph satisfies $\sgn(\sigma)
\sgn_\Delta(\sigma) = 1$. We call those graphs \emph{skew-invariant graphs}.
\end{definition}

\begin{remark}
	Note that if $[\Delta] \in \Graphs^{\sgn}_n$ 
	then $\Delta$ has at most 1 isolated point. In fact if there are at least 2 isolated point, there is an automorphism of the graph that fixes all the edges, but permutes the isolated points with an odd permutation. 
\end{remark}

The proof of the following two lemmas is straightforward from the definitions.
\begin{lemma} In $H^{\bullet}(\Z \Ar_n; \Q)$ the following equality holds:
	$$
	\sigma(\mu_\Delta) = {\sgn_\Delta(\sigma)}\mu_{\sigma(\Delta)}.
	$$
\end{lemma}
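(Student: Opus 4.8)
The plan is to exploit that each $\sigma \in \W_n$ acts as a graded \emph{ring} automorphism of the exterior algebra $H^\bullet(\Z\Ar_n;\Q)$, so that the action is entirely determined by its effect on the degree-$1$ generators and distributes across products. Writing $\mu_\Delta = \omega_{i_1j_1}\cdots\omega_{i_kj_k}$ as the product over the lexicographically ordered edges $e_1,\ldots,e_k$ of $\Delta$, I would first use the ring-automorphism property to obtain
$$
\sigma(\mu_\Delta) = \sigma(\omega_{i_1j_1})\cdots\sigma(\omega_{i_kj_k}).
$$

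Next I would read off from the action formula \eqref{eq:sigma_action} that $\sigma(\omega_{i_lj_l})$ is exactly the standard generator attached to the image edge $\sigma(e_l)$ of $\sigma(\Delta)$: the two cases in \eqref{eq:sigma_action} simply record whether the endpoints of $\sigma(e_l)$ must be swapped to be listed increasingly, so that applying $\sigma$ to the generator of an edge yields the generator of the image edge (which depends only on the \emph{unordered} edge). By definition of the induced edge permutation we have $\sigma(e_l) = e'_{\sigma_{E\Delta}(l)}$, where $e'_1,\ldots,e'_k$ is the lexicographically ordered edge list of $\sigma(\Delta)$; hence
$$
\sigma(\mu_\Delta) = \omega_{e'_{\sigma_{E\Delta}(1)}}\cdots\omega_{e'_{\sigma_{E\Delta}(k)}}.
$$

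Finally I would reorder these degree-$1$ factors into the lexicographic order of $\sigma(\Delta)$. Since distinct degree-$1$ elements anticommute in the exterior algebra, sorting the factors according to the permutation $\sigma_{E\Delta}$ introduces precisely the sign $\sgn(\sigma_{E\Delta})$, giving
$$
\sigma(\mu_\Delta) = \sgn(\sigma_{E\Delta})\,\omega_{e'_1}\cdots\omega_{e'_k} = \sgn_\Delta(\sigma)\,\mu_{\sigma(\Delta)},
$$
which is the claimed identity. This is not so much an argument with a hard step as a bookkeeping verification: there is no genuine obstacle, only two points requiring care. The first is confirming that the single-generator action produces the generator of the image edge regardless of whether the endpoints get swapped, which holds because $\omega_{ij}$ is indexed by the unordered pair $\{i,j\}$. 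The second is matching the sign conventions, i.e.\ checking that the reordering sign coincides with $\sgn(\sigma_{E\Delta})$ under the definition $\sgn_\Delta(\sigma) := \sgn(\sigma_{E\Delta})$; this is immediate from the standard behaviour of products of generators in an exterior algebra under permutation of the factors.
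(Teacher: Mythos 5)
Your proof is correct and is exactly the "straightforward from the definitions" verification that the paper invokes without writing out: the paper omits the proof entirely, and your argument (ring automorphism on generators, identification of image generators with edges of $\sigma(\Delta)$ via \eqref{eq:sigma_action}, then the reordering sign $\sgn(\sigma_{E\Delta})=\sgn_\Delta(\sigma)$ from anticommutativity) is the intended bookkeeping. Nothing is missing.
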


\begin{lemma} In $H^{\bullet}(\Z \Ar_n; \qsgn)$ the following equality holds:
	$$
	\sigma(\mu_\Delta) = \sgn(\sigma)
	{\sgn_\Delta(\sigma)}\mu_{\sigma(\Delta)}.
	$$
\end{lemma}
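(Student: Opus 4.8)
The plan is to deduce this twisted identity from its untwisted counterpart (the preceding lemma) by keeping track of the single extra scalar coming from the coefficient module $\qsgn$. The starting point is the isomorphism \eqref{eq:skew_cohom}, which I would read not merely as a graded vector space isomorphism but as an isomorphism of $\W_n$-representations $H^r(\Z\Ar_n;\qsgn)\simeq \qsgn\otimes H^r(\Z\Ar_n;\Q)$. Under this identification the class denoted $\mu_\Delta\in H^\bullet(\Z\Ar_n;\qsgn)$ corresponds to $e\otimes\mu_\Delta$, where $e$ is a fixed generator of the one-dimensional space $\qsgn$ and the second factor is the untwisted class $\mu_\Delta\in H^\bullet(\Z\Ar_n;\Q)$ of Definition \ref{def:mu_delta}.

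First I would recall that the $\W_n$-action on a tensor product of representations is diagonal, so that $\sigma(e\otimes\mu_\Delta)=\sigma(e)\otimes\sigma(\mu_\Delta)$. Since $\qsgn$ is the sign representation, $\sigma(e)=\sgn(\sigma)\,e$, while on the untwisted factor the preceding lemma gives $\sigma(\mu_\Delta)=\sgn_\Delta(\sigma)\,\mu_{\sigma(\Delta)}$. Combining these two contributions yields $\sigma(e\otimes\mu_\Delta)=\sgn(\sigma)\,\sgn_\Delta(\sigma)\,\bigl(e\otimes\mu_{\sigma(\Delta)}\bigr)$, and transporting back along \eqref{eq:skew_cohom} this reads $\sigma(\mu_\Delta)=\sgn(\sigma)\,\sgn_\Delta(\sigma)\,\mu_{\sigma(\Delta)}$ in $H^\bullet(\Z\Ar_n;\qsgn)$, which is exactly the claim.

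The only point requiring care---and the place where the extra sign is genuinely produced---is the precise description of the $\W_n$-action on the twisted cohomology, which I expect to be the main (though modest) obstacle. I would justify the diagonal decomposition above by unwinding the definition of the action of the quotient $\W_n=\Grp_n/\Z\Ar_n$ on $H^\bullet(\Z\Ar_n;\qsgn)$: it is induced by the conjugation action of $\Grp_n$ on the normal subgroup $\Z\Ar_n$ together with the action on the coefficients. Because $\Z\Ar_n$ lies in the kernel of $\qsgn$, the coefficient action descends to $\W_n$ and contributes exactly the scalar $\sgn(\sigma)$, whereas the conjugation part is the same ``geometric'' permutation of the generators $\omega_{ij}$ that underlies the untwisted lemma and produces the factor $\sgn_\Delta(\sigma)\,\mu_{\sigma(\Delta)}$. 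Once this identification of the two parts of the action is in place, the computation is immediate, which is why the statement is indeed straightforward from the definitions.
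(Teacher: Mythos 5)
Your proof is correct and follows exactly the route the paper intends: the identification $H^r(\Z\Ar_n;\qsgn)\simeq\qsgn\otimes H^r(\Z\Ar_n;\Q)$ of \eqref{eq:skew_cohom} is set up immediately before the lemma precisely so that the twisted action decomposes diagonally, with $\qsgn$ contributing $\sgn(\sigma)$ and the untwisted lemma contributing $\sgn_\Delta(\sigma)\mu_{\sigma(\Delta)}$. The paper omits the argument as ``straightforward from the definitions,'' and your write-up, including the justification that the coefficient action descends to $\W_n$ because $\Z\Ar_n$ acts trivially on $\qsgn$, is a faithful and correct filling-in of that omitted proof.
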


\begin{theorem}\label{thm:skewgenerators}
The set of cohomology classes 
\begin{equation}\label{eq:alpha_sgn}
\alpha_\graph^{\sgn}:= \frac{1}{\StabWn(\Delta_\graph)}\sum_{\sigma \in \W_n} 
\sigma(\mu_{\Delta_\graph}) \in H^{|E\graph|}(\Z \Ar_n; \qsgn) 
\end{equation}
for 
$\graph \in \Graphs^{\sgn}_n$
is a basis of $H^{\bullet}(\Grp_n; \qsgn) \subset H^{\bullet}(\Z \Ar_n; \qsgn)$.  
\end{theorem}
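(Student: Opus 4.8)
The plan is to mirror the proof of Theorem~\ref{thm:generators}, replacing the untwisted action of $\W_n$ by the twisted one recorded in the second of the two preceding lemmas. By Proposition~\ref{prop:main} applied to $M = \qsgn$ we have $H^\bullet(\Grp_n;\qsgn) = H^\bullet(\Z\Ar_n;\qsgn)^{\W_n}$, so it suffices to produce a basis of the $\W_n$-invariants, and the right tool is again the averaging projector $\pi_{\W_n} = \frac{1}{|\W_n|}\sum_{\sigma\in\W_n}\sigma$, which is idempotent onto the invariants. First I would recall that $H^\bullet(\Z\Ar_n;\qsgn)$ has as a vector-space basis the monomials $\mu_\Delta$ indexed by subgraphs $\Delta\subset\full_n$, and that the twisted lemma gives $\sigma(\mu_\Delta)=\sgn(\sigma)\,\sgn_\Delta(\sigma)\,\mu_{\sigma(\Delta)}$. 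In particular $\pi_{\W_n}$ preserves the span of each $\W_n$-orbit of subgraphs, i.e.\ of each isomorphism class $\graph$, so the computation of the invariants splits orbit by orbit and it is enough to evaluate $\pi_{\W_n}(\mu_{\Delta_\graph})$ for each class $\graph$.

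The heart of the argument is the dichotomy governed by the twisted sign condition. If $\graph\in\Graphs^{\sgn}_n$, then every $\tau\in\StabWn(\Delta_\graph)$ is an automorphism of $\Delta_\graph$, hence satisfies $\sgn(\tau)\,\sgn_{\Delta_\graph}(\tau)=1$, so by the twisted lemma $\tau(\mu_{\Delta_\graph})=\mu_{\Delta_\graph}$. Thus the summand $\sigma(\mu_{\Delta_\graph})$ is constant along each coset of $\StabWn(\Delta_\graph)$, and one finds $\pi_{\W_n}(\mu_{\Delta_\graph}) = \frac{|\StabWn(\Delta_\graph)|}{n!}\,\alpha_\graph^{\sgn}$, a nonzero invariant proportional to $\alpha_\graph^{\sgn}$ (nonzero because it is a signed sum over the orbit of $\Delta_\graph$ of distinct basis monomials). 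Conversely, if $\graph\notin\Graphs^{\sgn}_n$, there is an automorphism $\tau$ of $\Delta_\graph$ with $\sgn(\tau)\,\sgn_{\Delta_\graph}(\tau)=-1$, so $\tau(\mu_{\Delta_\graph})=-\mu_{\Delta_\graph}$; substituting $\sigma\mapsto\sigma\tau$ in $\sum_\sigma\sigma(\mu_{\Delta_\graph})$ shows the sum vanishes, so $\pi_{\W_n}$ annihilates the whole orbit.

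It remains to deduce that $\{\alpha_\graph^{\sgn} : \graph\in\Graphs^{\sgn}_n\}$ is a basis. Linear independence is immediate because distinct classes $\graph$ are supported on disjoint sets of basis monomials $\mu_\Delta$ (distinct $\W_n$-orbits of subgraphs). For spanning, note that $\pi_{\W_n}$ surjects onto $H^\bullet(\Z\Ar_n;\qsgn)^{\W_n}$, and by the dichotomy its values on the basis $\{\mu_\Delta\}$ lie in the span of the $\alpha_\graph^{\sgn}$ with $\graph$ skew-invariant; hence these classes span the invariants. The only genuine novelty relative to Theorem~\ref{thm:generators} is the bookkeeping of the extra factor $\sgn(\sigma)$, which turns the invariance condition $\sgn_\Delta(\sigma)=1$ into $\sgn(\sigma)\,\sgn_\Delta(\sigma)=1$; I expect the one point meriting care is the edge case of isolated vertices, where the preceding remark (a skew-invariant graph has at most one isolated vertex) guarantees that $\Graphs^{\sgn}_n$ is exactly the indexing set with no orbit counted twice. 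Everything else is formally identical to the untwisted proof.
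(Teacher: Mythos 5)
Your proposal is correct and is essentially the proof the paper intends: the paper omits the argument, stating it is analogous to that of Theorem~\ref{thm:generators}, and your write-up is exactly that adaptation, replacing the condition $\sgn_\Delta(\sigma)=1$ by $\sgn(\sigma)\sgn_\Delta(\sigma)=1$ and running the same averaging-projector dichotomy (stabilizer acts trivially versus an element acting by $-1$), with the same independence and spanning conclusions. The remark on isolated vertices is harmless but not needed for the argument.
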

The proof of the Theorem 
above is analogous to the one of Theorem \ref{thm:generators} and we omit it. 

We note in particular that when 
$\graph \notin \Graphs_n^\sgn$
then the cohomology class $\alpha_{\graph}^{\sgn}\in H^{|E\graph|}(\Z \Ar_n; \Q)$ defined above is trivial.

In the following Proposition we use the identification $\mathcal{D}_n^\sgn \subset \mathcal{D}_{n+1}^\sgn$ given by adding one isolated vertex. 
\begin{proposition}\label{prop:stable_skew}
The natural morphism $\Gamma_n \hookrightarrow \Gamma_{n+1}$ induces the restriction map $H^{\bullet}(\Gamma_{n+1};\qsgn) \to H^{\bullet}(\Gamma_{n};\qsgn)$. 
%
If $\graph \in \Graphs_{n+1}^\sgn$ is a graph with exactly one isolated vertex, the restriction of $\alpha_{\graph}^{\sgn} \in H^{r}(\Z \Ar_{n+1}; \qsgn)$ is $\alpha_{\graph}^{\sgn} \in H^{r}(\Z \Ar_{n}; \qsgn)$. If $\graph$ is a graph without isolated vertices $\alpha_{\graph}^{\sgn}$ restricts to zero.   In particular the restriction homomorphism $H^{\bullet}(\Gamma_{n+1};\qsgn) \to H^{\bullet}(\Gamma_{n};\qsgn)$ is injective when restricted to the subspace generated by the classes  $\alpha_{\graph}^{\sgn}$ with $\graph$ a graph with exactly one isolated vertex. 
\end{proposition}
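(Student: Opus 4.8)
The plan is to follow the template of the proof of Proposition~\ref{prop:stab_untwisted}, the only genuinely new feature being that a skew-invariant graph carries \emph{at most one} isolated vertex, which makes the combinatorial bookkeeping cleaner than in the untwisted case. First I would record that the inclusion $\Grp_n \hookrightarrow \Grp_{n+1}$ covers the standard inclusion $\W_n \hookrightarrow \W_{n+1}$ (realized as the stabilizer of the vertex $n+1$), and that the sign character restricts to the sign character, since a permutation fixing $n+1$ has the same sign in $\W_n$ and in $\W_{n+1}$. Hence $\qsgn$ restricts to $\qsgn$, and by naturality of the spectral sequence of Proposition~\ref{prop:main} (whose higher $\W$-cohomology vanishes) the induced restriction map $H^\bullet(\Grp_{n+1};\qsgn)\to H^\bullet(\Grp_n;\qsgn)$ is simply the restriction of the map $H^\bullet(\Z\Ar_{n+1};\qsgn)\to H^\bullet(\Z\Ar_n;\qsgn)$ to the $\W$-invariant subspaces. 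The one computational fact I need at the level of $\Z\Ar$ is that, because $\qsgn$ is a trivial $\Z\Ar_{n+1}$-module, this latter map acts on the monomial basis exactly as in the untwisted case: it sends $\omega_{i,n+1}\mapsto 0$ and fixes every $\omega_{ij}$ with $j\le n$, so $\mu_\Delta$ restricts to $\mu_\Delta$ if $\Delta$ has no edge at the vertex $n+1$ and to $0$ otherwise. It then suffices to compute the image of each generator $\alpha_\graph^{\sgn}$.

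If $\graph \in \Graphs_{n+1}^{\sgn}$ has no isolated vertex, then the chosen representative $\Delta_\graph$ uses all $n+1$ vertices, so for every $\sigma\in\W_{n+1}$ the graph $\sigma(\Delta_\graph)$ still has an edge at the vertex $n+1$. Using the Lemma $\sigma(\mu_{\Delta_\graph})=\sgn(\sigma)\sgn_\Delta(\sigma)\mu_{\sigma(\Delta_\graph)}$ together with the restriction rule above, every summand dies, so $\alpha_\graph^{\sgn}$ restricts to zero.

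For the case of exactly one isolated vertex I would invoke the representative convention: $\Delta_\graph$ has the vertex $n+1$ as its \emph{unique} isolated vertex and all its edges among $\{1,\dots,n\}$, where it agrees with the representative $\Delta_{\graph'}$ of the class $\graph'\in\Graphs_n^{\sgn}$ obtained by deleting that vertex. A summand $\sigma(\mu_{\Delta_\graph})$ survives restriction precisely when $\sigma(\Delta_\graph)$ has no edge at $n+1$, i.e.\ when $\sigma^{-1}(n+1)$ is an isolated vertex of $\Delta_\graph$; since $n+1$ is the only such vertex this forces $\sigma(n+1)=n+1$, i.e.\ $\sigma\in\W_n$. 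The same uniqueness forces every automorphism of $\Delta_\graph$ to fix $n+1$, whence $|\Stab_{\W_{n+1}}(\Delta_\graph)|=|\Stab_{\W_n}(\Delta_{\graph'})|$; and for $\sigma\in\W_n$ both $\sgn(\sigma)$ and the induced edge permutation (hence $\sgn_\Delta(\sigma)$) are unchanged, while $\mu_{\sigma(\Delta_\graph)}$ restricts to $\mu_{\sigma(\Delta_{\graph'})}$. Assembling these identities, the restriction of $\alpha_\graph^{\sgn}$ is exactly $\frac{1}{|\Stab_{\W_n}(\Delta_{\graph'})|}\sum_{\sigma\in\W_n}\sigma(\mu_{\Delta_{\graph'}})=\alpha_{\graph'}^{\sgn}$, which is the claim under the identification $\Graphs_n^{\sgn}\subset\Graphs_{n+1}^{\sgn}$. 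I expect this paragraph to be the main obstacle: the delicate point is checking that no spurious combinatorial factor survives in the normalization, and this is precisely where skew-invariance is used, since two or more isolated vertices (impossible here, but typical in the untwisted setting) would replace the clean equality of stabilizer orders by a symmetric-group factor permuting the isolated vertices.

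Finally, injectivity is formal. By the first two cases, restriction carries the set $\{\alpha_\graph^{\sgn} : \graph\in\Graphs_{n+1}^{\sgn}\text{ with one isolated vertex}\}$ bijectively onto $\{\alpha_{\graph'}^{\sgn} : \graph'\in\Graphs_n^{\sgn}\text{ without isolated vertex}\}$, the bijection being removal of the isolated vertex. The target is a subset of the basis of $H^\bullet(\Grp_n;\qsgn)$ furnished by Theorem~\ref{thm:skewgenerators}, hence linearly independent, so the restriction homomorphism is injective on the span of the classes $\alpha_\graph^{\sgn}$ with $\graph$ having exactly one isolated vertex.
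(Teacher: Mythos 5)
Your proposal is correct and follows essentially the same route as the paper: the paper omits the proof, declaring it analogous to Proposition~\ref{prop:stab_untwisted}, whose argument is exactly your edge-at-vertex-$(n+1)$ analysis (summands containing a factor $\omega_{i,n+1}$ die, the others assemble into $\alpha_{\graph}^{\sgn}$ for the smaller group). Your additional bookkeeping — the equality of stabilizer orders forced by the uniqueness of the isolated vertex, the compatibility of $\sgn$ and $\sgn_\Delta$ under $\W_n \subset \W_{n+1}$, and the linear-independence argument for injectivity via Theorem~\ref{thm:skewgenerators} — simply makes explicit what the paper leaves to the reader.
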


The proof is analogue to the proof of Proposition \ref{prop:stab_untwisted} and we omit it.

Since a graph with $r$ edges contains at least $n-2r$ isolated vertices
we deduce from the Proposition above that 
$H^{r}(\Gamma_n;\qsgn) = 0$ as soon as $n \geq 2r+2$. Hence we have the following consequence. 
\begin{corollary}
The stable skew cohomology $H^{\bullet}(\Gamma_{\infty};\qsgn)$ is zero. 
\end{corollary}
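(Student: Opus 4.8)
The plan is to deduce the vanishing directly from the degreewise bound just established, namely that $H^r(\Gamma_n;\qsgn)=0$ whenever $n \geq 2r+2$, by passing to the colimit. First I would recall that $\Gamma_\infty$ is the directed colimit of the $\Gamma_n$ along the inclusions $\Gamma_n \hookrightarrow \Gamma_{n+1}$, and that the sign module $\qsgn$ is compatible with all these inclusions, since adjoining a fixed point to a permutation leaves its sign unchanged; thus $\qsgn$ is a well-defined $\Gamma_\infty$-module and the restriction maps of Proposition \ref{prop:stable_skew} organize the groups $H^\bullet(\Gamma_n;\qsgn)$ into an inverse system. The cohomology of the colimit is then related to this system by the Milnor exact sequence
\[
0 \to {\varprojlim_n}^{1}\, H^{r-1}(\Gamma_n;\qsgn) \to H^r(\Gamma_\infty;\qsgn) \to \varprojlim_n H^r(\Gamma_n;\qsgn) \to 0,
\]
so it suffices to control the two outer terms in each degree.

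Next I would fix a cohomological degree $r$ and observe that the inverse system $\{H^r(\Gamma_n;\qsgn)\}_n$ is eventually zero. Indeed, by the remark preceding Theorem \ref{thm:skewgenerators} a skew-invariant graph has at most one isolated vertex, so a skew-invariant graph with $r$ edges has at most $2r+1$ vertices and cannot belong to $\Graphs^{\sgn}_n$ once $n \geq 2r+2$; by Theorem \ref{thm:skewgenerators} this forces $H^r(\Gamma_n;\qsgn)=0$ for all such $n$, exactly as recorded above. An inverse system that is eventually zero has both $\varprojlim$ and $\varprojlim^{1}$ trivial, so applying this in degrees $r$ and $r-1$ annihilates both ends of the Milnor sequence and yields $H^r(\Gamma_\infty;\qsgn)=0$. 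As $r$ was arbitrary, $H^{\bullet}(\Gamma_\infty;\qsgn)=0$.

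The only genuine subtlety, and the step I would be most careful to justify, is the passage from the finite groups $\Gamma_n$ to their colimit at the level of cohomology: one must confirm that the Milnor-type sequence applies and that the transition maps in the inverse system are precisely the restriction maps analyzed in Proposition \ref{prop:stable_skew}. Once that identification is in place the conclusion is immediate, and in fact one does not even need the explicit description of those restriction maps — only the degreewise eventual vanishing matters, since an eventually-zero system trivially has vanishing limit and derived limit. I expect no computational difficulty beyond the elementary vertex count $2r+1$ for skew-invariant graphs with $r$ edges.
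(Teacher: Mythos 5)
Your proposal is correct and takes essentially the same approach as the paper: the corollary is an immediate consequence of the degreewise vanishing $H^r(\Gamma_n;\qsgn)=0$ for $n \geq 2r+2$, which the paper obtains from exactly the same isolated-vertex count (a graph with $r$ edges on $n$ vertices has at least $n-2r$ isolated vertices, while a skew-invariant graph has at most one) together with Theorem \ref{thm:skewgenerators}. Your only addition is the explicit Milnor $\varprojlim$--$\varprojlim^1$ justification for passing to the colimit $\Gamma_\infty$, a step the paper leaves implicit; since the inverse system is eventually zero in each degree, this bookkeeping is sound and the conclusion follows.
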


\subsection{Multiplicative structure}\label{sect:multiplicative_struct}

We begin describing the multiplicative structure of the cohomology of $\Z\Ar_n$ with untwisted and twisted rational coefficients and how they interplay.
 
\begin{lemma}
The isomorphisms of $\W_n$-representations 
\begin{align}
&\Q \otimes \Q  \simeq  \Q, \label{eq:untwisted}\\
&\qsgn \otimes \qsgn  \simeq  \Q \label{eq:skew}\\
&\Q \otimes \qsgn  \simeq   \qsgn, \label{eq:twisted} 
\end{align}
induces natural bilinear products of $\W_n$-representations
\begin{align*}
&\cup:H^\bullet(\Z  \Ar_n;\Q) \otimes H^\bullet(\Z  \Ar_n;\Q) \to H^\bullet(\Z \Ar_n;\Q),\\
&\cup^s:H^\bullet(\Z \Ar_n;\qsgn) \otimes H^\bullet(\Z \Ar_n;\qsgn) \to H^\bullet(\Z \Ar_n;\Q),\\
&\cup^t:H^\bullet(\Z \Ar_n;\Q) \otimes H^\bullet(\Z \Ar_n;\qsgn) \to H^\bullet(\Z \Ar_n;\qsgn),
\end{align*}
\end{lemma}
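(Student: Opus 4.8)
The plan is to obtain all three products from the single general cup product in the cohomology of the group $\Z\Ar_n$, and then to check that it descends to a pairing of $\W_n$-representations. Recall that for any group $G$ and any two $G$-modules $M, N$ there is a natural cup product
$$
H^i(G; M) \otimes H^j(G; N) \longrightarrow H^{i+j}(G; M \otimes_\Q N),
$$
induced on the cochain level by the standard diagonal approximation together with the tensor product of coefficients. I would apply this with $G = \Z\Ar_n$ and $M, N \in \{\Q, \qsgn\}$. Since $\qsgn$ factors through $\W_n = \Gamma_n/\Z\Ar_n$, both $\Q$ and $\qsgn$ are \emph{trivial} $\Z\Ar_n$-modules, so the coefficient tensor products appearing above are exactly the tensor products of $\W_n$-representations, namely the left-hand sides of \eqref{eq:untwisted}, \eqref{eq:skew} and \eqref{eq:twisted}.

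Second, I would postcompose the cup product with the map induced on cohomology by the three coefficient isomorphisms. Explicitly, each $\W_n$-equivariant isomorphism $M \otimes N \xrightarrow{\sim} P$ (with $P = \Q$ or $\qsgn$) is in particular a morphism of $\Z\Ar_n$-modules, hence induces $H^{i+j}(\Z\Ar_n; M \otimes N) \to H^{i+j}(\Z\Ar_n; P)$; composing with the cup product gives the three pairings $\cup$, $\cup^s$ and $\cup^t$ with the stated targets $\Q$, $\Q$ and $\qsgn$ respectively.

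The point that actually requires argument, and which I expect to be the only nontrivial step, is that these pairings are morphisms of $\W_n$-representations. The $\W_n$-action on each $H^\bullet(\Z\Ar_n; M)$ is the one arising from the extension $1 \to \Z\Ar_n \to \Gamma_n \to \W_n \to 1$, in which an element of $\Gamma_n$ acts by conjugation on $\Z\Ar_n$ together with its action on the coefficients $M$, the inner automorphisms acting trivially so that the action descends to $\W_n$ (this is precisely the $\W_n$-module structure on the $E_2^{0,\bullet}$ terms used in Proposition \ref{prop:main}). The cup product is natural with respect to such compatible pairs consisting of an automorphism of $G$ and a coefficient map, so for every $w \in \W_n$ one has $w(a) \cup w(b) = w(a \cup b)$ for the diagonal action on $M \otimes N$. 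Since the coefficient isomorphisms $M \otimes N \simeq P$ are $\W_n$-equivariant by hypothesis, the composite pairings are $\W_n$-equivariant as well. Everything else is the formal behaviour of the cup product: its bilinearity is immediate, and its associativity and graded commutativity, as well as the compatibility of $\cup^s$ and $\cup^t$ with $\cup$, follow from the corresponding properties of the standard cup product together with the evident compatibilities among the three coefficient isomorphisms.
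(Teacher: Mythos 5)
Your proposal is correct and follows essentially the same route as the paper: both construct $\cup$, $\cup^s$, $\cup^t$ as the cup product with coefficients in a tensor-product pairing (the paper phrases this as cross product composed with the diagonal, citing Spanier; your cochain-level diagonal approximation is the same construction in group-cohomology language), postcomposed with the map induced by the coefficient isomorphisms \eqref{eq:untwisted}, \eqref{eq:skew}, \eqref{eq:twisted}. The only difference is that you spell out the $\W_n$-equivariance via naturality of the cup product for the conjugation-plus-coefficient action coming from $1 \to \Z\Ar_n \to \Gamma_n \to \W_n \to 1$, a point the paper leaves implicit in the word ``natural''; this is a welcome addition, not a deviation.
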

\begin{proof}
The first product is the usual cap product for the cohomology ring $H^\bullet(\Z \Ar_n;\Q)$.

All tree products are actually a form of cup product, since they are obtained composing the cross products
\begin{align*}
	&\times:H^\bullet(\Z  \Ar_n;\Q) \otimes H^\bullet(\Z  \Ar_n;\Q) \to H^\bullet(\Z \Ar_n\times \Z \Ar_n;\Q \otimes \Q),\\
	&\times^s:H^\bullet(\Z \Ar_n;\qsgn) \otimes H^\bullet(\Z \Ar_n;\qsgn) \to H^\bullet(\Z \Ar_n \times \Z \Ar_n;\qsgn \otimes \qsgn),\\
	&\times^t:H^\bullet(\Z \Ar_n;\Q) \otimes H^\bullet(\Z \Ar_n;\qsgn) \to H^\bullet(\Z \Ar_n\times \Z \Ar_n;\Q \otimes \qsgn),
\end{align*}
with the diagonal map and the homomorphism induced by the map of coefficients given by equations \eqref{eq:untwisted}, \eqref{eq:skew} and \eqref{eq:twisted} (see for example \cite[ch.~5.6]{spanier}).
\end{proof}
\begin{remark}
We note that the products above can be easily computer in terms of the tensor product and of the usual product of the exterior algebra:
\begin{align*}
	&\cup: \Lambda^\bullet(\Q\Ar_n) \otimes \Lambda^\bullet(\Q\Ar_n) \to \Lambda^\bullet(\Q\Ar_n),\\
	&\cup^s: (\qsgn \otimes \Lambda^\bullet(\Q\Ar_n)) \otimes (\qsgn \otimes \Lambda^\bullet(\Q\Ar_n)) \to (\qsgn \otimes \qsgn) \otimes \Lambda^\bullet(\Q\Ar_n) \simeq \Lambda^\bullet(\Q\Ar_n),\\
	&\cup^t: \Lambda^\bullet(\Q\Ar_n) \otimes (\qsgn \otimes \Lambda^\bullet(\Q\Ar_n))  \to \qsgn  \otimes \Lambda^\bullet(\Q\Ar_n).
\end{align*}
\end{remark}
\begin{definition}
For a given graph $\Delta$, let $S(\Delta, \Delta_1, \Delta_2)$ be the set of pairs $(\Delta', \Delta'')$ such that 
\begin{enumerate}
\item $\Delta', \Delta''$ are subgraph of $\Delta$ 
such that $E\Delta' \sqcup E\Delta'' = E\Delta$;
\item $\Delta'$ is isomorphic to $\Delta_1$ and $\Delta''$ is isomorphic to $\Delta_2$.
\end{enumerate}
\end{definition}
\noindent For $(\Delta', \Delta'') \in S(\Delta, \Delta_1, \Delta_2)$ we order the sets vertices $V\Delta'$ and $V\Delta''$ with the ordering induced by the ordering of $V\Delta$. Let $\sigma_1, \sigma_2 \in \W_n$ such that $\sigma_1(\Delta_1) = \Delta'$ and $\sigma_2(\Delta_2) = \Delta''$. 
\begin{definition}\label{def:coeff_plain}
For $[\Delta],[\Delta_1], [\Delta_2] \in \Graphs_n$ and $(\Delta', \Delta'') \in S(\Delta, \Delta_1, \Delta_2)$ we define the coefficient $\sgn{(\Delta,\Delta', \Delta'')} \in \{+1, -1\}$ according to the following formula:
$$
\mu_\Delta = \sgn{(\Delta,\Delta', \Delta'')} \sgn_{\Delta_1}(\sigma_1) \sgn_{\Delta_2}(\sigma_2)\mu_{\Delta'} \cup \mu_{\Delta''}.
$$
\end{definition}
\begin{definition}\label{def:coeff_skew}
For $[\Delta] \in \Graphs_n,[\Delta_1], [\Delta_2] \in \Graphs^\sgn_n$ and $(\Delta', \Delta'') \in S(\Delta, \Delta_1, \Delta_2)$ we define the coefficient $\sgn^s{(\Delta,\Delta', \Delta'')} \in \{+1, -1\}$ according to the following formula:
$$
\mu_\Delta = \sgn^s{(\Delta,\Delta', \Delta'')} \sgn(\sigma_1) \sgn_{\Delta_1}(\sigma_1)  \sgn(\sigma_2)\sgn_{\Delta_2}(\sigma_2)\mu_{\Delta'} \cup^s \mu_{\Delta''}.
$$
\end{definition}
\begin{definition}\label{def:coeff_mixed}
	For $[\Delta_1] \in \Graphs_n$, $[\Delta_2], [\Delta] \in \Graphs^\sgn_n$ and $(\Delta', \Delta'') \in S(\Delta, \Delta_1, \Delta_2)$ we define the coefficient $\sgn^t{(\Delta,\Delta', \Delta'')} \in \{+1, -1\}$ according to the following formula:
	$$
	\mu_\Delta = \sgn^t{(\Delta,\Delta', \Delta'')} \sgn_{\Delta_1}(\sigma_1)  \sgn(\sigma_2)\sgn_{\Delta_2}(\sigma_2)\mu_{\Delta'} \cup^t\mu_{\Delta''}.
	$$
\end{definition}

\begin{lemma}\label{lem:sigma_action1}
	The following relations hold:
\begin{align}
&\sgn{(\sigma(\Delta),\Delta',\Delta'')} = \sgn_\Delta(\sigma) \sgn{(\Delta,\Delta', \Delta'')} \label{eq:action_a}\\
&\sgn^s{(\sigma(\Delta),\Delta',\Delta'')} = \sgn_\Delta(\sigma) \sgn^s{(\Delta,\Delta', \Delta'')}\label{eq:action_b}\\
&\sgn^t{(\sigma(\Delta),\Delta',\Delta'')} = \sgn(\sigma)\sgn_\Delta(\sigma) \sgn^t{(\Delta,\Delta', \Delta'')}\label{eq:action_c}
\end{align}
\end{lemma}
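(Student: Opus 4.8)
The plan is to establish the three relations in Lemma~\ref{lem:sigma_action1} by tracking how the defining equations of $\sgn(\Delta,\Delta',\Delta'')$, $\sgn^s(\Delta,\Delta',\Delta'')$, and $\sgn^t(\Delta,\Delta',\Delta'')$ transform when $\Delta$ is replaced by $\sigma(\Delta)$. The key observation is that if $(\Delta',\Delta'') \in S(\Delta,\Delta_1,\Delta_2)$, then applying $\sigma$ produces a decomposition $(\sigma(\Delta'),\sigma(\Delta'')) \in S(\sigma(\Delta),\Delta_1,\Delta_2)$, since $\sigma$ restricts to isomorphisms on the subgraphs and $E\sigma(\Delta') \sqcup E\sigma(\Delta'') = E\sigma(\Delta)$. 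First I would fix a decomposition $(\Delta',\Delta'')$ of $\Delta$ with $\sigma_1(\Delta_1)=\Delta'$ and $\sigma_2(\Delta_2)=\Delta''$, and note that for $\sigma(\Delta)$ the natural choices of the auxiliary permutations become $\sigma\sigma_1$ and $\sigma\sigma_2$, since $(\sigma\sigma_1)(\Delta_1)=\sigma(\Delta')$ and $(\sigma\sigma_2)(\Delta_2)=\sigma(\Delta'')$.

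The main computation for \eqref{eq:action_a} is then to apply $\sigma$ to the defining equation in Definition~\ref{def:coeff_plain}. On the left side, the first Lemma (relating $\sigma(\mu_\Delta)$ to $\sgn_\Delta(\sigma)\mu_{\sigma(\Delta)}$) gives $\sigma(\mu_\Delta)=\sgn_\Delta(\sigma)\mu_{\sigma(\Delta)}$. On the right side, since $\sigma$ acts as a ring automorphism on $H^\bullet(\Z\Ar_n;\Q)$, we get $\sigma(\mu_{\Delta'}\cup\mu_{\Delta''})=\sigma(\mu_{\Delta'})\cup\sigma(\mu_{\Delta''})=\sgn_{\Delta'}(\sigma)\sgn_{\Delta''}(\sigma)\mu_{\sigma(\Delta')}\cup\mu_{\sigma(\Delta'')}$, but more usefully I would track everything through the edge-permutation signs. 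Comparing the equation defining $\sgn(\sigma(\Delta),\Delta',\Delta'')$ (written using $\sigma\sigma_1$, $\sigma\sigma_2$) with the $\sigma$-image of the equation defining $\sgn(\Delta,\Delta',\Delta'')$, and using the cocycle-type identity $\sgn_{\Delta_i}(\sigma\sigma_i)=\sgn_{\Delta_i}(\sigma_i)\cdot(\text{sign of $\sigma$ on the relevant edges})$, the extra factor that appears is precisely $\sgn_\Delta(\sigma)$. This follows because the total sign incurred by $\sigma$ acting on $E\Delta'\sqcup E\Delta''=E\Delta$ is the sign of $\sigma$ as an edge permutation of $\Delta$, namely $\sgn_\Delta(\sigma)$.

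The identities \eqref{eq:action_b} and \eqref{eq:action_c} follow by the same bookkeeping, with the additional ordinary-sign factors $\sgn(\sigma_i)$ built into the respective definitions. For \eqref{eq:action_b}, the definition of $\sgn^s$ carries factors $\sgn(\sigma_1)\sgn(\sigma_2)$; replacing $\sigma_i$ by $\sigma\sigma_i$ multiplies these by $\sgn(\sigma)^2=1$, so no net ordinary sign is introduced and the transformation factor is again just $\sgn_\Delta(\sigma)$. For \eqref{eq:action_c}, only one ordinary-sign factor $\sgn(\sigma_2)$ is present in the definition of $\sgn^t$ (since $\Delta_1\in\Graphs_n$ contributes no $\sgn(\sigma_1)$), so replacing $\sigma_2$ by $\sigma\sigma_2$ introduces exactly one factor of $\sgn(\sigma)$, yielding the factor $\sgn(\sigma)\sgn_\Delta(\sigma)$.

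The main obstacle I anticipate is the careful edge-ordering bookkeeping: the signs $\sgn_{\Delta_i}(\sigma_i)$ depend on the chosen orderings of $V\Delta'$, $V\Delta''$ induced from $V\Delta$, and when passing to $\sigma(\Delta)$ one must verify that the induced orderings on $V\sigma(\Delta')$, $V\sigma(\Delta'')$ are compatible so that the edge-permutation signs compose correctly. Making precise the factorization of the sign of $\sigma$ on $E\Delta=E\Delta'\sqcup E\Delta''$ into the contributions on each piece, and checking it matches $\sgn_\Delta(\sigma)$ independently of how the two edge blocks interleave under the lexicographic ordering, is the delicate point; the shuffle sign relating $\mu_{\Delta'}\cup\mu_{\Delta''}$ to $\mu_\Delta$ is absorbed into the $\{\pm1\}$ coefficients themselves and must be shown to be $\sigma$-invariant, which is exactly what these relations encode.
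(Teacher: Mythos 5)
Your proposal is correct and follows essentially the same route as the paper: apply $\sigma$ to the defining equation of each coefficient, use the $\W_n$-equivariance of $\cup$, $\cup^s$, $\cup^t$ together with the transformation laws $\sigma(\mu_\Delta)=\sgn_\Delta(\sigma)\,\mu_{\sigma(\Delta)}$ (untwisted coefficients) and $\sigma(\mu_\Delta)=\sgn(\sigma)\sgn_\Delta(\sigma)\,\mu_{\sigma(\Delta)}$ (twisted coefficients), and identify the result with the defining equation for $\sigma(\Delta)$ taken with the auxiliary permutations $\sigma\sigma_1,\sigma\sigma_2$. The only cosmetic difference is that the paper writes each defining relation in the compact form $\mu_\Delta=\sgn(\Delta,\Delta',\Delta'')\,\sigma_1(\mu_{\Delta_1})\cup\sigma_2(\mu_{\Delta_2})$ (and its $\cup^s$, $\cup^t$ analogues), which absorbs your cocycle identity $\sgn_{\Delta_i}(\sigma\sigma_i)=\sgn_{\sigma_i(\Delta_i)}(\sigma)\,\sgn_{\Delta_i}(\sigma_i)$ and makes the sign-tracking in (b) and (c) automatic — in particular, the extra $\sgn(\sigma)$ factors arising from the twisted coefficients of $\mu_\Delta$ and of the twisted tensor factor on the right cancel in pairs, a cancellation your count of the definitional $\sgn(\sigma_i)$ factors uses implicitly but correctly.
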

\begin{proof}
We notice that in $H^{\bullet}(\Z \Ar_n; \Q)$ we have the equality
\begin{equation}\label{eq:mu_untwisted}
	\mu_\Delta=\sgn{(\Delta,\Delta', \Delta'')} \sigma_1(\mu_{\Delta_1}) \cup \sigma_2(\mu_{\Delta_2}) =  \sgn{(\Delta,\Delta', \Delta'')} \sgn_{\Delta_1}(\sigma_1) \sgn_{\Delta_2}(\sigma_2)\mu_{\Delta'}\cup\mu_{\Delta''}
\end{equation}
and the product
$\sgn{(\Delta,\Delta', \Delta'')} \sgn_{\Delta_1}(\sigma_1)\sgn_{\Delta_2}(\sigma_2)$ is the sign of the shuffle permutation that takes the ordered list of edges of $\Delta'$ followed by the ordered list of edges of $\Delta''$ and gives 
the ordered list of edges of $\Delta$.

Moreover we have the following similar equalities:
\begin{equation}\label{eq:mu_skew}
	\mu_\Delta= \sgn^s{(\Delta,\Delta', \Delta'')} \sigma_1(\mu_{\Delta_1}) \cup^s \sigma_2(\mu_{\Delta_2}) =  \sgn{(\Delta,\Delta', \Delta'')} \sgn(\sigma_1) \sgn_{\Delta_1}(\sigma_1)  \sgn(\sigma_2)\sgn_{\Delta_2}(\sigma_2)\mu_{\Delta'}\cup^s\mu_{\Delta''}.
\end{equation}
and
\begin{equation}\label{eq:mu_twisted}
	\mu_\Delta= \sgn^t{(\Delta,\Delta', \Delta'')} \sigma_1(\mu_{\Delta_1}) \cup^t \sigma_2(\mu_{\Delta_2}) =  \sgn{(\Delta,\Delta', \Delta'')} \sgn_{\Delta_1}(\sigma_1)  \sgn(\sigma_2)\sgn_{\Delta_2}(\sigma_2)\mu_{\Delta'}\cup^t\mu_{\Delta''}.
\end{equation}

Equation \eqref{eq:action_a} follows since $\mu_\Delta = \sgn{(\Delta,\Delta', \Delta'')} \sigma_1(\mu_{\Delta_1}) \cup \sigma_2(\mu_{\Delta_2}) \in H^{\bullet}(\Z \Ar_n; \Q)$ and hence, applying $\sigma$ we get
$$
\mu_{\sigma(\Delta)} = \sgn_\Delta(\sigma) \sigma(\mu_\Delta) =  \sgn_\Delta(\sigma) \sgn{(\Delta,\Delta', \Delta'')}
(\sigma \circ \sigma_1)(\mu_{\Delta_1}) \cup (\sigma \circ \sigma_2)(\mu_{\Delta_2}).
$$

For Equation \eqref{eq:action_b} we can apply $\sigma$ to $\mu_\Delta = \sgn^s{(\Delta,\Delta', \Delta'')} \sigma_1(\mu_{\Delta_1}) \cup^s \sigma_2(\mu_{\Delta_2}) \in H^{\bullet}(\Z \Ar_n; \Q)$ and the equality follows for the same argument, since we have
$$
\mu_{\sigma(\Delta)} = \sgn_\Delta(\sigma) \sigma(\mu_\Delta) =  \sgn_\Delta(\sigma) \sgn^s{(\Delta,\Delta', \Delta'')}
(\sigma \circ \sigma_1)(\mu_{\Delta_1}) \cup^s (\sigma \circ \sigma_2)(\mu_{\Delta_2}).
$$

For Equation \eqref{eq:action_c} we apply $\sigma$ to $\mu_\Delta = \sgn^t{(\Delta,\Delta', \Delta'')} \sgn_{\Delta_1}(\sigma_1)  \sgn(\sigma_2)\sgn_{\Delta_2}(\sigma_2)\mu_{\Delta'} \cup^t \mu_{\Delta''},$
hence we get
\[
\mu_{\sigma(\Delta)} = \sgn(\sigma)\sgn_\Delta(\sigma) \sigma(\mu_\Delta) = \sgn(\sigma) \sgn_\Delta(\sigma) \sgn^t{(\Delta,\Delta', \Delta'')}
(\sigma \circ \sigma_1)(\mu_{\Delta_1}) \cup^t (\sigma \circ \sigma_2)(\mu_{\Delta_2}). 
\qedhere
\]
\end{proof}
%

\begin{definition}
We define the coefficients $a{(\Delta,\Delta_1, \Delta_2)}$, $a^s{(\Delta,\Delta_1, \Delta_2)}$ and $a^t{(\Delta,\Delta_1, \Delta_2)}$ as follows:
\begin{align}
&a{(\Delta,\Delta_1, \Delta_2)} := \sum_{(\Delta', \Delta'') \in  S(\Delta, \Delta_1, \Delta_2)}  \sgn{(\Delta,\Delta', \Delta'')};\\
&a^s{(\Delta,\Delta_1, \Delta_2)} := \sum_{(\Delta', \Delta'') \in  S(\Delta, \Delta_1, \Delta_2)}  \sgn^s{(\Delta,\Delta', \Delta'')};\\
&a^t{(\Delta,\Delta_1, \Delta_2)} := \sum_{(\Delta', \Delta'') \in  S(\Delta, \Delta_1, \Delta_2)}  \sgn^t{(\Delta,\Delta', \Delta'')};
\end{align}

\end{definition}

\begin{lemma}\label{lem:sigma_action2}
\begin{align}
&	a{(\sigma(\Delta),\Delta_1, \Delta_2)}  = \sgn_\Delta(\sigma) a{(\Delta,\Delta_1, \Delta_2)};\\
&	a^s{(\sigma(\Delta),\Delta_1, \Delta_2)}  = \sgn(\sigma) \sgn_\Delta(\sigma) a^s{(\Delta,\Delta_1, \Delta_2)};\\
&	a^t{(\sigma(\Delta),\Delta_1, \Delta_2)}  =\sgn(\sigma) \sgn_\Delta(\sigma) a^t{(\Delta,\Delta_1, \Delta_2)}.
\end{align}

\end{lemma}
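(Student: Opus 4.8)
The plan is to reduce each of the three identities to the corresponding line of Lemma~\ref{lem:sigma_action1}, by reindexing the sums that define $a$, $a^s$ and $a^t$.

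First I would record the elementary bijection that $\sigma$ induces on decomposition sets. Since $\sigma$ is a graph isomorphism $\Delta \to \sigma(\Delta)$ carrying edges bijectively to edges, it sends any pair $(\Delta',\Delta'')$ with $E\Delta' \sqcup E\Delta'' = E\Delta$ to a pair $(\sigma(\Delta'),\sigma(\Delta''))$ with $E(\sigma\Delta') \sqcup E(\sigma\Delta'') = E(\sigma\Delta)$, and it preserves isomorphism types, so $\sigma(\Delta') \cong \Delta_1$ and $\sigma(\Delta'') \cong \Delta_2$. Hence
$$
S(\Delta, \Delta_1, \Delta_2) \xrightarrow{\ \sim\ } S(\sigma(\Delta), \Delta_1, \Delta_2), \qquad (\Delta',\Delta'') \longmapsto (\sigma(\Delta'),\sigma(\Delta'')),
$$
is a bijection, with inverse induced by $\sigma^{-1}$. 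Consequently the sum defining $a(\sigma(\Delta),\Delta_1,\Delta_2)$ (and likewise those defining $a^s$ and $a^t$) may be rewritten as a sum over $(\Delta',\Delta'') \in S(\Delta,\Delta_1,\Delta_2)$ of the coefficient attached to the transported decomposition $(\sigma(\Delta'),\sigma(\Delta''))$.

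Next I would apply Lemma~\ref{lem:sigma_action1} inside each reindexed sum. By \eqref{eq:action_a} the plain coefficient of the transported decomposition equals $\sgn_\Delta(\sigma)$ times $\sgn(\Delta,\Delta',\Delta'')$, and \eqref{eq:action_b}, \eqref{eq:action_c} give the analogous statements for $\sgn^s$ and $\sgn^t$. The decisive feature is that in each case the prefactor supplied by Lemma~\ref{lem:sigma_action1} depends only on $\Delta$ and $\sigma$, and not on the individual decomposition $(\Delta',\Delta'')$; it is therefore constant over the index set and pulls out of the sum, leaving precisely $a(\Delta,\Delta_1,\Delta_2)$, $a^s(\Delta,\Delta_1,\Delta_2)$ or $a^t(\Delta,\Delta_1,\Delta_2)$. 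This delivers the three claimed identities at once.

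The only genuine bookkeeping, and the step I expect to demand the most care, is the exact determination of these prefactors, i.e.\ the correct reading of Lemma~\ref{lem:sigma_action1} in the twisted settings. The point is that whether an extra factor $\sgn(\sigma)$ accompanies $\sgn_\Delta(\sigma)$ is dictated by whether the class $\mu_\Delta$ in the defining relation (Definitions~\ref{def:coeff_plain}, \ref{def:coeff_skew}, \ref{def:coeff_mixed}) is read in $H^\bullet(\Z\Ar_n;\Q)$ or in $H^\bullet(\Z\Ar_n;\qsgn)$ --- equivalently, by the target of the product $\cup$, $\cup^s$ or $\cup^t$ producing it --- since $\sigma$ acts on untwisted classes without, and on twisted classes with, the sign $\sgn(\sigma)$. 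Once this placement is fixed consistently, the constant prefactors are exactly those recorded in the three lines of Lemma~\ref{lem:sigma_action1}, none of them depends on $(\Delta',\Delta'')$, and the factoring-out argument closes all three cases verbatim.
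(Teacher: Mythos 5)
Your strategy --- transport decompositions along the bijection $S(\Delta,\Delta_1,\Delta_2)\xrightarrow{\ \sim\ } S(\sigma(\Delta),\Delta_1,\Delta_2)$, $(\Delta',\Delta'')\mapsto(\sigma(\Delta'),\sigma(\Delta''))$, apply Lemma~\ref{lem:sigma_action1} termwise, and pull the resulting constant prefactor out of the sum --- is precisely what the paper's one-line proof (``this follows immediately from Lemma~\ref{lem:sigma_action1}'') intends, and it does settle the first and third identities. The gap is in your conclusion that this ``delivers the three claimed identities at once''. It does not. For the middle one, the prefactor supplied by \eqref{eq:action_b} is $\sgn_\Delta(\sigma)$ with \emph{no} factor $\sgn(\sigma)$ --- exactly as your own placement rule predicts, since $\cup^s$ takes values in the \emph{untwisted} module, so the class $\mu_\Delta$ in Definition~\ref{def:coeff_skew} is read in $H^\bullet(\Z\Ar_n;\Q)$. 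Hence the factoring-out argument yields
\begin{equation*}
a^s(\sigma(\Delta),\Delta_1,\Delta_2)=\sgn_\Delta(\sigma)\,a^s(\Delta,\Delta_1,\Delta_2),
\end{equation*}
which is \emph{not} the printed second line, whose prefactor is $\sgn(\sigma)\sgn_\Delta(\sigma)$. Your closing sentence, asserting that the prefactors recorded in Lemma~\ref{lem:sigma_action1} ``close all three cases verbatim'', silently identifies two different sign rules.

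Moreover this is not a bookkeeping slip you could fix by rereading the definitions: the second line as printed is genuinely inconsistent with \eqref{eq:action_b}. Take $n=4$, $\Delta=\full_4$, $\Delta_1$ the star with three edges and $\Delta_2$ the triangle plus an isolated vertex (the two skew-invariant graphs the paper exhibits as a basis of $H^3(\Gamma_4;\qsgn)$). The set $S(\full_4,\Delta_1,\Delta_2)$ consists of the four splittings of $\full_4$ into a star and its complementary triangle; since $\full_4$ is invariant and $\W_4$ permutes these splittings transitively, \eqref{eq:action_b} forces all four signs $\sgn^s$ to coincide, so $a^s(\full_4,\Delta_1,\Delta_2)=\pm 4\neq 0$. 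On the other hand, any transposition $\sigma$ satisfies $\sigma(\full_4)=\full_4$, $\sgn_{\full_4}(\sigma)=1$, $\sgn(\sigma)=-1$, so the printed second line would give $a^s(\full_4,\Delta_1,\Delta_2)=-a^s(\full_4,\Delta_1,\Delta_2)=0$, a contradiction. In other words, what your argument actually proves is the corrected identity displayed above (the extra $\sgn(\sigma)$ belongs only to the $a^t$ case, where the product lands in the twisted module); a sound write-up must state this and flag the discrepancy with the lemma as printed, rather than claim agreement with it.
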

\begin{proof}
	This follows immediately from Lemma \ref{lem:sigma_action1}.
\end{proof}

\begin{definition}
We define the direct sum $\R := \Q \oplus \qsgn$ of the trivial $\W_n$-representation and the sign representation. We endow the $\W_n$-module $\R$ with the product structure given by $$(a,a') \cdot (b,b') \mapsto (ab+a'b',ab'+a'b).$$ 
\end{definition}
The $\W_n$-module $\R$ is a $\Z_2$-graded ring with a $\W_n$-covariant product.
We can consider the $\Z_2$-graded $\W_n$-representation $H^\bullet(\Z \Ar_n; \R) = H^\bullet(\Z \Ar_n; \Q) \oplus ( H^\bullet(\Z \Ar_n; \qsgn ))$. This is also naturally an algebra, with an $\W_n$-covariant product.


Taking the restriction to the $\W_n$-invariant submodule 
we get a product that restricts as follows:
\begin{align}
(H^i(\Z \Ar_n; \Q))^{\W_n} \otimes  (H^j(\Z \Ar_n; \Q))^{\W_n} &\mapsto (H^{i+j}(\Z \Ar_n; \Q))^{\W_n};\\
(H^i(\Z \Ar_n; \qsgn))^{\W_n} \otimes(H^j(\Z \Ar_n; \qsgn))^{\W_n}  &\mapsto ( H^{i+j}(\Z \Ar_n; \Q)^{\W_n};\\
(H^i(\Z \Ar_n; \qsgn))^{\W_n} \otimes(  H^j(\Z \Ar_n; \Q))^{\W_n}  &\mapsto ( H^{i+j}(\Z \Ar_n; \qsgn)^{\W_n};\\
( H^i(\Z \Ar_n; \Q))^{\W_n} \otimes(  H^j(\Z \Ar_n; \qsgn))^{\W_n}  &\mapsto ( H^{i+j}(\Z \Ar_n; \qsgn)^{\W_n}.
\end{align}
The cohomology groups described previously fit in the uniform setting of a cohomology ring:
$$H^\bullet(\Gamma_n;\R) = H^\bullet(\Gamma_n;\Q)  \oplus H^\bullet(\Gamma_n;\qsgn).$$

\begin{definition}
We consider the projection 
$\pi_{\W_n}: H^{\bullet}(\Z \Ar_n; \Q) \to H^{\bullet}(\Z \Ar_n; \Q)^{\W_n}$
defined by $$\pi_{\W_n}(\mu_\Delta) := \dfrac{1}{|\W_n|}\sum_{\sigma \in \W_n} \sigma(\mu_\Delta) = 
\dfrac{1}{|\W_n|}\sum_{\sigma \in \W_n} \sgn_\Delta(\sigma)\mu_{\sigma(\Delta)} =
\dfrac{|\StabWn(\Delta)|}{n!}  \alpha_{\Delta}$$ and the projection
$\pi_{\W_n}:  H^{\bullet}(\Z \Ar_n; \qsgn) \to ( H^{\bullet}(\Z \Ar_n; \qsgn))^{\W_n}$
defined by $$\pi_{\W_n}(\mu_\Delta) := \dfrac{1}{|\W_n|}\sum_{\sigma \in \W_n} \sigma(\mu_\Delta) =
\dfrac{1}{|\W_n|}\sum_{\sigma \in \W_n} \sgn(\sigma)\sgn_\Delta(\sigma)\mu_{\sigma(\Delta)} =
\dfrac{|\StabWn(\Delta)|}{n!}  \alpha_{\Delta}^\sgn.$$ 
\end{definition}
\begin{remark} For $\mu_\Delta \in H^{\bullet}(\Z \Ar_n; \Q)$ the projection 
	$\pi_{\W_n}(\mu_\Delta)$
	 is zero if and only if $\Delta \notin \Graphs_n$ (see the proof of Theorem \ref{thm:generators}), while for $\mu_\Delta \in H^{\bullet}(\Z \Ar_n; \qsgn)$ the projection
	 $\pi_{\W_n}(\mu_\Delta)$ is zero if and only if $\StabWn(\Delta)$ contains a permutation $\sigma$ such that $\sgn(\sigma)\sgn_\Delta(\sigma)=-1$, that is equivalent to say that $\Delta \notin \Graphs_n^\sgn$.
\end{remark}


\begin{remark}
	Since the multiplicative structure of $H^{\bullet}(\Z \Ar_n; \Q)$ is $\W_n$-covariant,  given two classes $\omega, \lambda \in H^{\bullet}(\Z \Ar_n; \Q)$ we have that $\pi_{\W_n}(\omega \cup \pi_{\W_n}(\lambda)) = \pi_{\W_n}(\omega)\cup  \pi_{\W_n}(\lambda)$. In particular the product $\pi_{\W_n}(\omega)\cup  \pi_{\W_n}(\lambda)$ is $\W_n$-invariant.
\end{remark}

We can rewrite the definition of $\alpha_\Delta$ given in Theorem \ref{thm:generators}, Equation \eqref{eq:alpha_graph} as follows:
\begin{equation}\label{eq:alpha_delta}
\alpha_\Delta= \sum_{[\sigma] \in \W_n/\StabWn(\Delta)} \sigma(\mu_\Delta) = \frac{n!}{|\StabWn(\Delta)|} \pi_{\W_n}(\mu_\Delta)
\end{equation}
where for any coset $[\sigma] \in \W_n/\StabWn(\Delta)$ the permutation $\sigma$ is a representative of $[\sigma]$ and since $\Delta \in \Graphs_n$ the element $\sigma(\mu_\Delta) \in H^{\bullet}(\Z \Ar_n; \Q)$ does not depend on the choice of the representative.

In a similar way we can rewrite the definition of $\alpha_\Delta^\sgn$ given in Theorem \ref{thm:skewgenerators}, Equation \eqref{eq:alpha_sgn}:
\begin{equation}\label{eq:alpha_delta_sgn}
\alpha_\Delta^\sgn= \sum_{[\sigma] \in \W_n/\StabWn(\Delta)} \sigma(\mu_\Delta) = \frac{n!}{|\StabWn(\Delta)|} \pi_{\W_n}(\mu_\Delta)
\end{equation}
where we consider $\mu_\Delta \in H^{\bullet}(\Z \Ar_n; \qsgn)$. 


\begin{theorem} \label{theo:product}
The multiplicative structure of $H^\bullet(\Z \Ar_n; \R)$ can be computed as follows.
\begin{align}
\intertext{\begin{enumerate}\item[a)]
For $\graph_1, \graph_2 \in \Graphs_n$, the usual cup product in $H^\bullet(\Gamma_n; \Q)$ is given by the formula:\end{enumerate}}
\alpha_{\graph_1} \cup \alpha_{\graph_2} 
&=  \sum_{\graph \in \Graphs_n} a({\Delta_\graph}, \Delta_{\graph_1}, \Delta_{\graph_2}) \alpha_{\graph}
\in H^\bullet(\Gamma_n; \Q);&&
\label{eq:triv_triv}
\\
\intertext{\begin{enumerate}\item[b)] for $\graph_1 \in \Graphs_n, \graph_2 \in \Graphs_n^{\sgn}$, 
the natural bilinear form
$H^{\bullet}(\Gamma_n;\Q) \otimes H^\bullet(\Gamma_n;\qsgn) \to H^{\bullet}(\Gamma_n;\qsgn)$
is given by the formula:\end{enumerate}}
\alpha_{\graph_1} \cup^t \alpha_{\graph_2}^{\sgn} 
& =  \sum_{\graph \in \widetilde{\Graphs}_n} a^t(\Delta_\graph, \Delta_{\graph_1}, \Delta_{\graph_2}) 
\alpha_{\graph}^\sgn
	\in H^\bullet(\Gamma_n; \qsgn);&&
\label{eq:triv_mixodd}\\
\intertext{\begin{enumerate}\item[c)] for $\graph_1, \graph_2 \in \Graphs_n^{\sgn}$, the natural bilinear form $H^\bullet(\Gamma_n;\qsgn) \otimes H^\bullet(\Gamma_n;\qsgn) \to H^\bullet(\Gamma_n;\Q)$ is given by the formula:\end{enumerate}}
\alpha_{\graph_1}^{\sgn} \cup^s \alpha_{\graph_2}^{\sgn} 
&=  \sum_{\graph \in \Graphs_n^\sgn} a^s(\Delta_\graph, \Delta_{\graph_1}, \Delta_{\graph_2}) \alpha_{\graph}
\in H^\bullet(\Gamma_n; \Q). 
\label{eq:twist_twist}
\end{align}
\end{theorem}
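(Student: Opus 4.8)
The plan is to treat the three formulas uniformly by expanding each invariant generator into its defining coset sum, multiplying out, and then reading off the coefficient of a single monomial. I would start from the expressions \eqref{eq:alpha_delta} and \eqref{eq:alpha_delta_sgn}, writing
$$\alpha_{\graph_1} = \sum_{[\sigma_1] \in \W_n/\StabWn(\Delta_{\graph_1})} \sigma_1(\mu_{\Delta_{\graph_1}}), \qquad \alpha_{\graph_2} = \sum_{[\sigma_2] \in \W_n/\StabWn(\Delta_{\graph_2})} \sigma_2(\mu_{\Delta_{\graph_2}}),$$
with $\mu$ taken in $H^\bullet(\Z\Ar_n;\qsgn)$ for the twisted factors. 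Denoting by $\cup^\star$ and $\sgn^\star$ (with $\star$ blank, $s$, or $t$) the product and sign relevant to each case, distributing the product turns $\alpha_{\graph_1}\cup\alpha_{\graph_2}$, $\alpha_{\graph_1}\cup^t\alpha_{\graph_2}^\sgn$, and $\alpha_{\graph_1}^\sgn\cup^s\alpha_{\graph_2}^\sgn$ into a double sum of terms $\sigma_1(\mu_{\Delta_{\graph_1}})\cup^\star\sigma_2(\mu_{\Delta_{\graph_2}})$. By the first equalities in \eqref{eq:mu_untwisted}, \eqref{eq:mu_skew}, \eqref{eq:mu_twisted}, which read $\sigma_1(\mu_{\Delta_{\graph_1}})\cup^\star\sigma_2(\mu_{\Delta_{\graph_2}})=\sgn^\star(\Delta,\Delta',\Delta'')\,\mu_\Delta$, each such term is zero when $\Delta':=\sigma_1(\Delta_{\graph_1})$ and $\Delta'':=\sigma_2(\Delta_{\graph_2})$ share an edge, and otherwise equals $\sgn^\star(\Delta,\Delta',\Delta'')\mu_\Delta$ with $\Delta:=\Delta'\cup\Delta''$, so that $(\Delta',\Delta'')\in S(\Delta,\Delta_{\graph_1},\Delta_{\graph_2})$.

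Next I would extract coefficients. Each product $\cup^\star$ is a morphism of $\W_n$-representations, so applied to invariant inputs it yields a $\W_n$-invariant class; by Theorem \ref{thm:generators} (resp.\ Theorem \ref{thm:skewgenerators}) it is therefore a combination $\sum_\graph c_\graph\,\alpha_\graph$ (resp.\ $\sum_\graph c_\graph\,\alpha_\graph^\sgn$) of basis elements. Since $\mu_{\Delta_\graph}$ occurs with coefficient $1$ in $\alpha_\graph$ (resp.\ $\alpha_\graph^\sgn$) and in no other basis vector — distinct isomorphism classes labelling disjoint $\W_n$-orbits of subgraphs of $\full_n$ — the scalar $c_\graph$ is exactly the coefficient of $\mu_{\Delta_\graph}$ in the expanded double sum. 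The assignment $([\sigma_1],[\sigma_2])\mapsto(\sigma_1(\Delta_{\graph_1}),\sigma_2(\Delta_{\graph_2}))$ is injective on coset pairs and restricts to a bijection between those pairs whose image satisfies $E\Delta'\sqcup E\Delta''=E\Delta_\graph$ and the set $S(\Delta_\graph,\Delta_{\graph_1},\Delta_{\graph_2})$. Summing the contributions $\sgn^\star(\Delta_\graph,\Delta',\Delta'')$ over this bijective image gives $c_\graph=a^\star(\Delta_\graph,\Delta_{\graph_1},\Delta_{\graph_2})$ by definition, which is the asserted formula.

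The step I expect to be the real obstacle is the sign bookkeeping that makes these manipulations legitimate. First, the coefficient attached to a coset must be independent of the chosen representative: this holds precisely because the source graphs lie in $\Graphs_n$ or $\Graphs_n^\sgn$, forcing $\sgn_{\Delta_{\graph_i}}$ (resp.\ $\sgn\cdot\sgn_{\Delta_{\graph_i}}$) to be trivial on $\StabWn(\Delta_{\graph_i})$, exactly the hypothesis under which Definitions \ref{def:coeff_plain}, \ref{def:coeff_skew}, \ref{def:coeff_mixed} render $\sgn^\star(\Delta,\Delta',\Delta'')$ unambiguous. Second, one must check that no spurious basis element survives: a class $\graph$ outside the index set appropriate to the target carries a stabilizer element of the wrong sign, so Lemma \ref{lem:sigma_action2} forces $a^\star(\Delta_\graph,\Delta_{\graph_1},\Delta_{\graph_2})=0$, consistently with the fact that such a $\mu_{\Delta_\graph}$ cannot appear in any basis vector of the target. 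Cases (b) and (c) then run word for word as (a): the only new feature is that the equivariance of $\sigma_i(\mu_{\Delta_{\graph_i}})$ in $H^\bullet(\Z\Ar_n;\qsgn)$ produces the additional factors $\sgn(\sigma_i)$, and these are exactly the factors absorbed into $\sgn^s$ and $\sgn^t$ through \eqref{eq:mu_skew} and \eqref{eq:mu_twisted}, so they cancel after substitution and leave the clean coefficient $\sgn^s(\Delta_\graph,\Delta',\Delta'')$ (resp.\ $\sgn^t$).
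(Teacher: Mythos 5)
Your proof is correct, and its first half coincides with the paper's: both expand the classes into coset sums via \eqref{eq:alpha_delta} and \eqref{eq:alpha_delta_sgn}, discard the terms where the two image graphs share an edge, and identify the surviving coset pairs with $S(\Delta_\graph,\Delta_{\graph_1},\Delta_{\graph_2})$; in both arguments the invariance (resp.\ skew-invariance) of the input graphs is exactly what makes each summand, and hence each sign $\sgn^\star$, independent of the choice of coset representatives. The endgames, however, differ. The paper carries out the full regrouping: it reindexes the double sum over \emph{all} isomorphism classes $\graph \in \mathcal{G}_n$ and all cosets $[\sigma] \in \W_n/\StabWn(\Delta_\graph)$, producing the inner coefficient $a^\star(\sigma(\Delta_\graph),\Delta_{\graph_1},\Delta_{\graph_2})$, and then uses Lemma \ref{lem:sigma_action2} as the central tool to rewrite this as a sign times $a^\star(\Delta_\graph,\Delta_{\graph_1},\Delta_{\graph_2})$, so that the sum over $[\sigma]$ reassembles into $\sigma(\mu_{\Delta_\graph})$ and then into $\alpha_\graph$ (resp.\ $\alpha_\graph^\sgn$), the index set shrinking to $\Graphs_n$ (resp.\ $\Graphs^\sgn_n$) because the coefficient vanishes otherwise. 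You instead invoke the $\W_n$-equivariance of $\cup$, $\cup^s$, $\cup^t$ together with the basis statements of Theorems \ref{thm:generators} and \ref{thm:skewgenerators} to know a priori that the product is a combination $\sum_\graph c_\graph \alpha_\graph$ (resp.\ $\alpha_\graph^\sgn$), and you pin down each $c_\graph$ by matching the coefficient of the single monomial $\mu_{\Delta_\graph}$, which occurs with coefficient $1$ in $\alpha_\graph$ and in no other basis vector. This is a genuine economy: Lemma \ref{lem:sigma_action2} is demoted to the peripheral role of checking that spurious classes receive coefficient zero (which also reconciles the index sets appearing in the statement), and you never need to track how the monomials $\mu_{\sigma(\Delta_\graph)}$ away from the distinguished representatives recombine. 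What the paper's longer chain of equalities buys in exchange is that it exhibits the identity monomial by monomial, re-deriving the invariance of the product rather than presupposing it, at the cost of the heavier orbit bookkeeping.
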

%
\begin{proof} The proof is analogous for the three cases. For conciseness we will write $\Delta_1$ for $\Delta_{\graph_1}$ and $\Delta_2$ for $\Delta_{\graph_2}$.
\begin{enumerate} 
\item[a)] For $\graph_1, \graph_2 \in \Graphs_n$ we have:
\begin{align*}
\alpha_{\graph_1} \cup \alpha_{\graph_2}   = & \left( 
\sum_{[\sigma'] \in \W_n/\StabWn(\Delta_1)} \sigma'(\mu_{\Delta_1}) \right) \cup \left( 
\sum_{[\sigma''] \in \W_n/\StabWn(\Delta_2)} \sigma''(\mu_{\Delta_2}) \right) =\\
 = & \left( 
\sum_{[\sigma'] \in \W_n/\StabWn(\Delta_1)} \!\!\! \sgn_{\Delta_1}(\sigma') \mu_{\sigma'(\Delta_1)} \right) \cup \left( 
\sum_{[\sigma''] \in \W_n/\StabWn(\Delta_2)}\!\!\! \sgn_{\Delta_2}(\sigma'') \mu_{\sigma''(\Delta_2)} \right) =\\
= & \sum_{\graph \in \mathcal{G}_n} \sum_{[\sigma] \in \W_n/\StabWn(\Delta_{\graph})} \!\!\! \sum_{\substack{
[\sigma'] \in \W_n/\StabWn(\Delta_1)\\
[\sigma''] \in \W_n/\StabWn(\Delta_2)\\
E\sigma'(\Delta_1) \sqcup E\sigma''(\Delta_2)=E\sigma(\Delta_{\graph})
}}
\!\!\!\!\!\!\sgn_{\Delta_1}(\sigma') \mu_{\sigma'(\Delta_1)} 
\sgn_{\Delta_2}(\sigma'') \mu_{\sigma''(\Delta_2)} =\\
= & \sum_{\graph \in \mathcal{G}_n} \sum_{[\sigma] \in \W_n/\StabWn(\Delta_{\graph})} \!\!\! \sum_{\substack{
		[\sigma'] \in \W_n/\StabWn(\Delta_1)\\
		[\sigma''] \in \W_n/\StabWn(\Delta_2)\\
		E\sigma'(\Delta_1) \sqcup E\sigma''(\Delta_2)=E\sigma(\Delta_{\graph})
}}
\!\!\!\!\!\! \sgn{(\sigma(\Delta_\graph),\sigma'(\Delta_1),\sigma''(\Delta_2))}\mu_{\sigma(\Delta_\graph)}
=\\
= & \sum_{\graph \in \mathcal{G}_n} \sum_{[\sigma] \in \W_n/\StabWn(\Delta_{\graph})} \!\!\! 
a(\sigma(\Delta_\graph),\Delta_1,\Delta_2) \mu_{\sigma(\Delta_\graph)}
 =\\
 = & \sum_{\graph \in \mathcal{G}_n} \sum_{[\sigma] \in \W_n/\StabWn(\Delta_{\graph})} \!\!\! 
 a(\Delta_\graph,\Delta_1,\Delta_2) \sgn_{\Delta_\graph}(\sigma) \mu_{\sigma(\Delta_\graph)}
 =\\
 = & \sum_{\graph \in \Graphs_n} \sum_{[\sigma] \in \W_n/\StabWn(\Delta_{\graph})} \!\!\! 
 a(\Delta_\graph,\Delta_1,\Delta_2) \sigma(\mu_{\Delta_\graph})
 =\\
 = & \sum_{\graph \in \Graphs_n}  a(\Delta_\graph,\Delta_1,\Delta_2) \alpha_{\graph}.
\end{align*}
\item[b)]
 For $\graph_1 \in \Graphs_n, \graph_2 \in \Graphs_n^{\sgn}$
 we have:
\begin{align*}
\alpha_{\graph_1} \cup^t \alpha^\sgn_{\graph_2}   = & \left( 
\sum_{[\sigma'] \in \W_n/\StabWn(\Delta_1)} \sigma'(\mu_{\Delta_1}) \right) \cup^t \left( 
\sum_{[\sigma''] \in \W_n/\StabWn(\Delta_2)} \sigma''(\mu_{\Delta_2}) \right) =\\
= & \left( 
\sum_{[\sigma'] \in \W_n/\StabWn(\Delta_1)} \!\!\!\! \sgn_{\Delta_1}(\sigma') \mu_{\sigma'(\Delta_1)} \right) \cup^t \left( 
\sum_{[\sigma''] \in \W_n/\StabWn(\Delta_2)}\!\!\!\!\! \sgn(\sigma'')\sgn_{\Delta_2}(\sigma'') \mu_{\sigma''(\Delta_2)} \right) =\\
= & \sum_{\graph \in \mathcal{G}_n} \sum_{[\sigma] \in \W_n/\StabWn(\Delta_{\graph})} \!\!\! \sum_{\substack{
		[\sigma'] \in \W_n/\StabWn(\Delta_1)\\
		[\sigma''] \in \W_n/\StabWn(\Delta_2)\\
		E\sigma'(\Delta_1) \sqcup E\sigma''(\Delta_2)=E\sigma(\Delta_{\graph})
}}
\!\!\!\!\!\!\!\!\!\!\sgn_{\Delta_1}(\sigma') \sgn(\sigma'')\sgn_{\Delta_2}(\sigma'')\mu_{\sigma'(\Delta_1)}\!\! \cup^t\!\! \mu_{\sigma''(\Delta_2)} =\\
= & \sum_{\graph \in \mathcal{G}_n} \sum_{[\sigma] \in \W_n/\StabWn(\Delta_{\graph})} \!\!\! \sum_{\substack{
		[\sigma'] \in \W_n/\StabWn(\Delta_1)\\
		[\sigma''] \in \W_n/\StabWn(\Delta_2)\\
		E\sigma'(\Delta_1) \sqcup E\sigma''(\Delta_2)=E\sigma(\Delta_{\graph})
}}
\!\!\!\!\!\!\!\!\! \sgn^t{(\sigma(\Delta_\graph),\sigma'(\Delta_1),\sigma''(\Delta_2))}\mu_{\sigma(\Delta_\graph)}
=\\
= & \sum_{\graph \in \mathcal{G}_n} \sum_{[\sigma] \in \W_n/\StabWn(\Delta_{\graph})} \!\!\! 
a^t(\sigma(\Delta_\graph),\Delta_1,\Delta_2) \mu_{\sigma(\Delta_\graph)}
=\\
= & \sum_{\graph \in \mathcal{G}_n} \sum_{[\sigma] \in \W_n/\StabWn(\Delta_{\graph})} \!\!\! 
a^t(\Delta_\graph,\Delta_1,\Delta_2) \sgn(\sigma) \sgn_{\Delta_\graph}(\sigma) \mu_{\sigma(\Delta_\graph)}
=\\
= & \sum_{\graph \in \widetilde{\Graphs}_n} \sum_{[\sigma] \in \W_n/\StabWn(\Delta_{\graph})} \!\!\! 
a^t(\Delta_\graph,\Delta_1,\Delta_2) \sigma(\mu_{\Delta_\graph})
=\\
= & \sum_{\graph \in \widetilde{\Graphs}_n}  a^t(\Delta_\graph,\Delta_1,\Delta_2) \alpha_{\graph}^\sgn.
\end{align*}
\item[c)] For $\graph_1, \graph_2 \in \Graphs_n^{\sgn}$ we have:
\begin{align*}
\alpha^\sgn_{\graph_1} \cup^s \alpha^\sgn_{\graph_2}   = & \left( 
\sum_{[\sigma'] \in \W_n/\StabWn(\Delta_1)} \sigma'(\mu_{\Delta_1}) \right) \cup^s \left( 
\sum_{[\sigma''] \in \W_n/\StabWn(\Delta_2)} \sigma''(\mu_{\Delta_2}) \right) =\\
= & \left( 
\sum_{[\sigma'] \in \W_n/\StabWn(\Delta_1)} \!\!\!\!\!\!\!\! \sgn(\sigma') \sgn_{\Delta_1}\!(\sigma') \mu_{\sigma'(\Delta_1)} \right) 
\! \cup^s \! \left( 
\sum_{[\sigma''] \in \W_n/\StabWn(\Delta_2)}\!\!\!\!\!\!\!\!\! \sgn(\sigma'')\sgn_{\Delta_2}\!(\sigma'') \mu_{\sigma''(\Delta_2)} \right) =\\
= & \!\!\!\sum_{\substack{
\graph \in \mathcal{G}_n\\
[\sigma] \in \W_n/\StabWn(\Delta_{\graph})
}}
\!\!\!\!\!\!\!\! \sum_{\substack{
		[\sigma'] \in \W_n/\StabWn(\Delta_1)\\
		[\sigma''] \in \W_n/\StabWn(\Delta_2)\\
		E\sigma'(\Delta_1) \sqcup E\sigma''(\Delta_2)=E\sigma(\Delta_{\graph})
}}
\!\!\!\!\!\!\!\!\!\!\!\!\!\!\!\!  \sgn(\sigma') \sgn_{\Delta_1}\!(\sigma')  
 \sgn(\sigma'')\sgn_{\Delta_2}\!(\sigma'') \mu_{\sigma'(\Delta_1)} \cup^s\mu_{\sigma''(\Delta_2)} =\\
= & \sum_{\graph \in \mathcal{G}_n} \sum_{[\sigma] \in \W_n/\StabWn(\Delta_{\graph})} \!\!\! \sum_{\substack{
		[\sigma'] \in \W_n/\StabWn(\Delta_1)\\
		[\sigma''] \in \W_n/\StabWn(\Delta_2)\\
		E\sigma'(\Delta_1) \sqcup E\sigma''(\Delta_2)=E\sigma(\Delta_{\graph})
}}
\!\!\!\!\!\!\!\!\! \sgn^s{(\sigma(\Delta_\graph),\sigma'(\Delta_1),\sigma''(\Delta_2))}\mu_{\sigma(\Delta_\graph)}
=\\
= & \sum_{\graph \in \mathcal{G}_n} \sum_{[\sigma] \in \W_n/\StabWn(\Delta_{\graph})} \!\!\! 
a^s(\sigma(\Delta_\graph),\Delta_1,\Delta_2) \mu_{\sigma(\Delta_\graph)}
=\\
= & \sum_{\graph \in \mathcal{G}_n} \sum_{[\sigma] \in \W_n/\StabWn(\Delta_{\graph})} \!\!\! 
a^s(\Delta_\graph,\Delta_1,\Delta_2) \sgn(\sigma) \sgn_{\Delta_\graph}(\sigma) \mu_{\sigma(\Delta_\graph)}
=\\
= & \sum_{\graph \in \widetilde{\Graphs}_n} \sum_{[\sigma] \in \W_n/\StabWn(\Delta_{\graph})} \!\!\! 
a^s(\Delta_\graph,\Delta_1,\Delta_2) \sigma(\mu_{\Delta_\graph})
=\\
= & \sum_{\graph \in \widetilde{\Graphs}_n}  a^s(\Delta_\graph,\Delta_1,\Delta_2) \alpha_{\graph}. \qedhere
\end{align*}
\end{enumerate}
\end{proof}
\subsection{Poincar\'e duality} Recall that we write $\full_n$ for the complete graph on $n$ vertices. It has $n(n-1)/2$ edges. It is easily checked that
any transposition acts with parity $(-1)^{n-2}$ on the set of edges, therefore every permutation acts with the same
parity on the vertices and on the edges when $n$ is odd, and acts evenly on the edges when $n$ is even (in other terms, $\sgn_{\full_n}(\sigma) = \sgn(\sigma)^n$ for all $\sigma \in \W_n$).
In particular we have the following result.
\begin{proposition}	Let $\full_n$ be the full graph on $n$ vertices. 
\begin{enumerate}
	\item $\full_n$ is invariant if and only if $n$ is even; 
	\item $\full_n$ is skew-invariant if and only if $n$ is odd.
\end{enumerate}
\end{proposition}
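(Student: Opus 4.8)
The plan is to verify both equivalences directly from the combinatorial criterion already recorded just before the statement, namely that $\sgn_{\full_n}(\sigma) = \sgn(\sigma)^n$ for every $\sigma \in \W_n$. This single identity is the workhorse; once it is in hand, both claims reduce to elementary parity bookkeeping, so the first thing I would do is confirm this identity carefully. Since $\W_n$ is generated by transpositions and both $\sigma \mapsto \sgn(\sigma)$ and $\sigma \mapsto \sgn_{\full_n}(\sigma)$ are homomorphisms $\W_n \to \{\pm 1\}$, it suffices to check the formula on a single transposition. A transposition $\tau = (a\,b)$ fixes every edge $(i,j)$ with $\{i,j\} \cap \{a,b\} = \emptyset$, swaps the two edges $(a,k)$ and $(b,k)$ for each of the $n-2$ vertices $k \notin \{a,b\}$, and fixes the edge $(a,b)$ itself. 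Thus $\tau$ induces exactly $n-2$ transpositions on the edge set, giving $\sgn_{\full_n}(\tau) = (-1)^{n-2} = (-1)^n = \sgn(\tau)^n$, and the homomorphism property extends this to all of $\W_n$.

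With this identity established, I would prove part (1) by unwinding the definition of an invariant graph. By Definition (of $\Graphs_n$), the class $[\full_n]$ is invariant precisely when every automorphism of $\full_n$ induces an \emph{even} permutation of the edges, i.e.\ when $\sgn_{\full_n}(\sigma) = 1$ for all $\sigma \in \W_n$ (noting that $\mathrm{Aut}(\full_n) = \W_n$ since the complete graph has full symmetry). Using $\sgn_{\full_n}(\sigma) = \sgn(\sigma)^n$, this holds for all $\sigma$ if and only if $\sgn(\sigma)^n = 1$ for all $\sigma$; taking any transposition $\sigma$ forces $(-1)^n = 1$, i.e.\ $n$ even, and conversely when $n$ is even the identity $\sgn(\sigma)^n = 1$ is automatic. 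Hence $\full_n$ is invariant if and only if $n$ is even.

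For part (2) I would argue analogously from the definition of $\Graphs_n^{\sgn}$: the class $[\full_n]$ is skew-invariant exactly when every automorphism $\sigma$ satisfies $\sgn(\sigma)\,\sgn_{\full_n}(\sigma) = 1$. Substituting the identity gives the condition $\sgn(\sigma)\cdot \sgn(\sigma)^n = \sgn(\sigma)^{n+1} = 1$ for all $\sigma \in \W_n$. Again testing on a transposition yields $(-1)^{n+1} = 1$, which holds precisely when $n$ is odd, and conversely for $n$ odd the relation $\sgn(\sigma)^{n+1} = 1$ holds for every $\sigma$. This proves that $\full_n$ is skew-invariant if and only if $n$ is odd.

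I do not anticipate a genuine obstacle here, since the statement is essentially a restatement of the parity computation $\sgn_{\full_n}(\sigma) = \sgn(\sigma)^n$ already performed in the paragraph preceding the proposition. The only point requiring mild care is the justification that it suffices to test the defining conditions on a single transposition rather than on all of $\W_n$, which follows from both $\sgn$ and $\sgn_{\full_n}$ being group homomorphisms into $\{\pm 1\}$ and from $\W_n$ being generated by transpositions; I would state this explicitly to keep the argument self-contained.
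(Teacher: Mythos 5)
Your proof is correct and follows essentially the same route as the paper, which establishes the identity $\sgn_{\full_n}(\sigma) = \sgn(\sigma)^n$ by the same transposition count ($n-2$ swapped pairs of edges) in the remark immediately preceding the proposition and then reads off both parts from the definitions. Your explicit justification that checking a single transposition suffices (both maps being homomorphisms to $\{\pm 1\}$) is a welcome bit of added rigor, but it is the same argument.
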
 

Since $\full_n$ is the only graph on $n$ vertices with $n(n-1)/2$ edges, we get the following application.
\begin{corollary} \label{cor:top} Let $N:= \frac{n(n-1)}{2}$.
\begin{enumerate}
\item $H^N(\Gamma_n;\Q) = \Q$ if and only if $n$ is even, and $H^N(\Gamma_n;\Q) = 0$ otherwise;
\item $H^N(\Gamma_n;\qsgn)= \Q$ if and only if $n$ is odd, and $H^N(\Gamma_n;\qsgn)= 0$ otherwise.
\end{enumerate}
\end{corollary}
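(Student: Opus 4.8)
The plan is to read off the top-degree cohomology directly from the combinatorial bases established in Theorems \ref{thm:generators} and \ref{thm:skewgenerators}, using the preceding Proposition only to resolve the parity condition. The key observation is that $N = \binom{n}{2}$ is the maximal number of edges a graph on $n$ vertices can have, and that the complete graph $\full_n$ is the \emph{unique} graph attaining this bound; this is immediate since $N$ counts all unordered pairs of vertices, so any graph with $N$ edges must contain every edge.

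First I would invoke Theorem \ref{thm:generators}, which gives that the classes $\alpha_\graph$ for $\graph \in \Graphs_n$ form a basis of $H^\bullet(\Gamma_n;\Q)$, with $\alpha_\graph$ lying in cohomological degree $|E\graph|$. Hence $\dim_\Q H^N(\Gamma_n;\Q)$ equals the number of invariant graphs on $n$ vertices with exactly $N$ edges. By the uniqueness just noted, this count is $1$ if $\full_n \in \Graphs_n$ and $0$ otherwise. The preceding Proposition identifies $\full_n \in \Graphs_n$ (invariance of $\full_n$) with $n$ being even, which yields assertion (1). The argument for assertion (2) is formally identical: Theorem \ref{thm:skewgenerators} provides a basis $\{\alpha_\graph^\sgn\}_{\graph \in \Graphs_n^\sgn}$ of $H^\bullet(\Gamma_n;\qsgn)$ with $\alpha_\graph^\sgn$ in degree $|E\graph|$, so $H^N(\Gamma_n;\qsgn)$ is spanned by $\alpha_{\full_n}^\sgn$ exactly when $\full_n$ is skew-invariant, which by the Proposition happens precisely when $n$ is odd, and is zero otherwise.

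I do not expect any genuine obstacle: the corollary is a direct consequence of the graph-indexed bases together with the parity computation $\sgn_{\full_n}(\sigma) = \sgn(\sigma)^n$ recorded before the Proposition. The only point meriting explicit mention is the uniqueness of $\full_n$ among graphs on $n$ vertices with $N$ edges, after which both statements reduce to quoting the Proposition.
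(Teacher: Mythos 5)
Your proposal is correct and matches the paper's own argument: the paper likewise deduces the corollary from the fact that $\full_n$ is the unique graph on $n$ vertices with $N$ edges, combined with the preceding Proposition (invariance of $\full_n$ iff $n$ even, skew-invariance iff $n$ odd) and the bases of Theorems \ref{thm:generators} and \ref{thm:skewgenerators}. Your explicit mention of the uniqueness of $\full_n$ and the identification $H^\bullet(\Gamma_n;\Q) = H^\bullet(\Z\Ar_n;\Q)^{\W_n}$ only spells out what the paper leaves implicit.
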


\begin{lemma}  \label{lem:pairing} Let $G$ be a finite group, $\kk$ a field of characteristic $0$ with trivial $G$-action, $U,V$ two $\kk G$-modules.
	If $\varphi : U \otimes_{\kk} V \to \kk$ is a non-degenerate $G$-equivariant pairing, then
	it induces a non-degenerate pairing $U^G \otimes_{\kk} V^G \to \kk$. 
\end{lemma}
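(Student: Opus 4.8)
The plan is to exploit the averaging (Reynolds) operator, which is available because $G$ is finite and $\kk$ has characteristic $0$, so that $|G|$ is invertible in $\kk$. First I would unwind the equivariance hypothesis: since $G$ acts trivially on $\kk$, the relation $\varphi(g\cdot u \otimes g\cdot v) = g\cdot\varphi(u\otimes v)$ simplifies to the invariance identity
\[
\varphi(gu \otimes gv) = \varphi(u \otimes v) \qquad \text{for all } g \in G,\ u \in U,\ v \in V.
\]
Next I would introduce the projection $\pi_V := \tfrac{1}{|G|}\sum_{g\in G} g \colon V \to V^G$, which is well defined and restricts to the identity on $V^G$, together with its analogue $\pi_U \colon U \to U^G$.

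The heart of the argument is the observation that, for a \emph{fixed} invariant vector $u \in U^G$ and an \emph{arbitrary} $w \in V$, averaging the second slot does not change the value of the pairing. Indeed, applying the invariance identity and then using $g^{-1}u = u$ gives $\varphi(u \otimes gw) = \varphi(g^{-1}u \otimes w) = \varphi(u \otimes w)$, and therefore
\[
\varphi\bigl(u \otimes \pi_V(w)\bigr) = \frac{1}{|G|}\sum_{g\in G} \varphi(u \otimes gw) = \varphi(u \otimes w),
\]
where crucially $\pi_V(w) \in V^G$. This is the only computation that requires genuine care, and it is where the whole statement turns.

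From here the conclusion is immediate. Given a nonzero $u \in U^G$, non-degeneracy of $\varphi$ on $U \otimes V$ supplies some $w \in V$ with $\varphi(u \otimes w) \neq 0$; setting $v := \pi_V(w) \in V^G$, the displayed identity yields $\varphi(u \otimes v) = \varphi(u \otimes w) \neq 0$, so $u$ is not in the left radical of the restricted pairing. Running the symmetric argument with $\pi_U$ in the first slot shows the right radical also vanishes, establishing that $\varphi$ restricts to a non-degenerate pairing $U^G \otimes_{\kk} V^G \to \kk$. I do not expect a real obstacle here: the argument is a clean averaging, and the single subtle point is simply to average the \emph{correct} slot, so that the averaged vector lands in the invariants while the pairing value against the invariant argument is left unchanged.
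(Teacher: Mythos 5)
Your proof is correct and follows essentially the same route as the paper: fix a nonzero invariant $u$, pick $w$ with $\varphi(u \otimes w) \neq 0$ by non-degeneracy, and use equivariance together with invariance of $u$ to show that averaging $w$ over $G$ leaves the pairing value unchanged, so the average $\pi_V(w) \in V^G$ still pairs nontrivially with $u$. The only cosmetic difference is that you spell out the symmetric argument for the other radical, which the paper leaves implicit.
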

\begin{proof}
	From the inclusion $U^G \otimes V^G \subset U \otimes V$ we deduce from $\varphi$ a linear map $\varphi_G : U^G \otimes V^G \to \kk$.
	Let $x \in U^G$ with $x \neq 0$. By assumption there exists $y\in V$ with $\varphi(x \otimes y) \neq 0$. Then, for
	every $g \in G$ we have $\varphi(x \otimes g.y) = \varphi(g.(x \otimes y)) = g.\varphi(x \otimes y) = \varphi(x \otimes y)$
	hence $\varphi(x \otimes \hat{y}) = \varphi(x\otimes y) \neq 0$ if $\hat{y} = (1/|G|)\sum_{g \in G} g.y$. Since $\hat{y} \in V^G \setminus \{ 0 \}$
	and $\varphi_G(x \otimes \hat{y}) \neq 0$,
	this proves the claim.
\end{proof}

\begin{proposition}
	If $n$ is even there are nondegenerate pairings
	\begin{align}
	H^r(\Gamma_n;\Q) \otimes H^{N-r}(\Gamma_n;\Q) & \to H^N(\Gamma_n;\Q) \simeq \Q;\\
	H^r(\Gamma_n;\qsgn) \otimes H^{N-r}(\Gamma_n;\qsgn) & \to H^N(\Gamma_n;\Q) \simeq \Q;\\
\intertext{while if $n$ is odd there is a nondegenerate pairing}
		H^r(\Gamma_n;\Q) \otimes H^{N-r}(\Gamma_n;\qsgn) & \to H^N(\Gamma_n;\qsgn) \simeq \Q.
	\end{align}
\end{proposition}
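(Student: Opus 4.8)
The plan is to lift each of the three pairings to the level of the exterior algebra $H^\bullet(\Z\Ar_n;\Q) \simeq \Lambda^\bullet(\Q\Ar_n)$, where Poincaré duality is classical, and then descend to $\W_n$-invariants by means of Lemma \ref{lem:pairing}. By Proposition \ref{prop:main} we have $H^\bullet(\Gamma_n;M) = H^\bullet(\Z\Ar_n;M)^{\W_n}$ for $M \in \{\Q,\qsgn\}$, so each target group $H^N(\Gamma_n;\Q)$ or $H^N(\Gamma_n;\qsgn)$ in the statement is exactly the $\W_n$-invariant part of the corresponding top-degree group on $\Z\Ar_n$, and Corollary \ref{cor:top} already guarantees these invariants are one-dimensional in precisely the asserted parities of $n$.

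First I would record the duality on the exterior algebra itself. Writing $W := \Q\Ar_n$, which has dimension $N = n(n-1)/2$, the wedge product $\Lambda^r W \otimes \Lambda^{N-r} W \to \Lambda^N W \simeq \Q$ is a non-degenerate pairing, and it is $\W_n$-equivariant since $\W_n$ acts on $\Lambda^\bullet W$ by graded algebra automorphisms. Composing this wedge product with the coefficient identifications \eqref{eq:untwisted}, \eqref{eq:skew} and \eqref{eq:twisted} shows that, in complementary degrees, each of $\cup$, $\cup^s$ and $\cup^t$ is nothing but this wedge pairing followed by an isomorphism of the coefficient module. In particular the non-degeneracy of $\cup^s$ and $\cup^t$ is inherited directly from that of $\cup$, because by \eqref{eq:skew_cohom} the groups $H^\bullet(\Z\Ar_n;\qsgn)$ and $H^\bullet(\Z\Ar_n;\Q)$ agree as (ungraded) vector spaces and the products differ only by the sign twists on the coefficients.

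The crucial step is then to determine the $\W_n$-module structure of the target, since Lemma \ref{lem:pairing} can be applied only when the codomain is the \emph{trivial} module $\Q$. Here I would invoke the parity computation $\sgn_{\full_n}(\sigma) = \sgn(\sigma)^n$: the top power $\Lambda^N W = H^N(\Z\Ar_n;\Q)$ carries the character $\sgn^n$. Consequently, for $\cup$ the target $H^N(\Z\Ar_n;\Q) \simeq \Lambda^N W$ is trivial exactly when $n$ is even; for $\cup^s$ the target is again $H^N(\Z\Ar_n;\Q) \simeq \qsgn^{\otimes 2} \otimes \Lambda^N W \simeq \Lambda^N W$, trivial exactly when $n$ is even; and for $\cup^t$ the target is $H^N(\Z\Ar_n;\qsgn) \simeq \qsgn \otimes \Lambda^N W$, whose character is $\sgn^{n+1}$ and which is therefore trivial exactly when $n$ is odd. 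These three conditions reproduce precisely the case division of the statement.

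Finally, in each of these three cases I would apply Lemma \ref{lem:pairing} with $G = \W_n$, $\kk = \Q$, and $(U,V)$ the relevant pair of complementary-degree cohomology groups, equipped with the $\W_n$-equivariant non-degenerate pairing $\varphi$ constructed above whose target is the trivial module $\Q$. The lemma then yields a non-degenerate pairing on the $\W_n$-invariants, that is, on the cohomology of $\Gamma_n$, which is exactly the claimed duality. The main obstacle is not the non-degeneracy, which is automatic from exterior-algebra duality together with Lemma \ref{lem:pairing}, but rather the representation-theoretic bookkeeping of the coefficient twists: one must check carefully that, after composing the wedge product with the identifications \eqref{eq:untwisted}--\eqref{eq:twisted}, the codomain is genuinely the trivial $\W_n$-module in exactly the stated parity, which is where the identity $\sgn_{\full_n}(\sigma) = \sgn(\sigma)^n$ does all the work.
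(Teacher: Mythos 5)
Your proposal is correct and follows essentially the same route as the paper: both arguments apply Lemma \ref{lem:pairing} to the non-degenerate $\W_n$-equivariant wedge pairings \eqref{pairing:norm}--\eqref{pairing:twist} on $\Lambda^\bullet(\Q\Ar_n)$, restricted to the parities in which the target top-degree module is trivial. Your explicit character computation via $\sgn_{\full_n}(\sigma)=\sgn(\sigma)^n$ is exactly the content the paper packages into Corollary \ref{cor:top} and the proposition preceding it.
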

\begin{proof}
The result is a direct application of Lemma \ref{lem:pairing} to the case considered in Corollary \ref{cor:top}.
When the groups  $H^N(\Gamma_n;\Q)$ and  $H^N(\Gamma_n;\qsgn)$ 
are non-trivial, the non-degenerate duality pairings
\begin{align}
	\Lambda^r(\Q\Ar_n) &\otimes \Lambda^{N-r}(\Q \Ar_n) && \to \Lambda^N(\Q \Ar_n) \simeq \Q \label{pairing:norm}\\
	(\qsgn\otimes \Lambda^r(\Q\Ar_n)) &\otimes (\qsgn \otimes \Lambda^{N-r}(\Q \Ar_n)) &&\to  \Lambda^N(\Q \Ar_n) \simeq \Q\label{pairing:skew}\\
	\Lambda^r(\Q\Ar_n) &\otimes (\qsgn \otimes \Lambda^{N-r}(\Q \Ar_n)) &&\to \qsgn \otimes \Lambda^N(\Q \Ar_n) \simeq \Q \label{pairing:twist}
\end{align}
induces non-degenerate Poincar\'e duality pairings on the cohomology of $\Gamma_n$.
\end{proof}

For $\Delta$ a graph on $n$ vertices, we denote by $\Delta^c$ its complement inside $\full_n$. This provides a combinatorial
description of the corresponding Poincar\'e duality pairings, which can be checked directly.

\begin{theorem} \label{thm:poincare} Let $N = n(n-1)/2$. Then the map $\Delta \mapsto \Delta^c$
induces the following isomorphisms:
\begin{enumerate}
\item  \label{itm:even_untwisted} If $n$ is even, $H^r(\Gamma_n;\Q) \to H^{N-r}(\Gamma_n;\Q)$;
 \item If $n$ is even  $H^r(\Gamma_n;\qsgn) \to H^{N-r}(\Gamma_n;\qsgn)$;
\item \label{itm:odd}  If $n$ is odd $H^r(\Gamma_n;\qsgn) \to H^{N-r}(\Gamma_n;\Q)$.
\end{enumerate}

\end{theorem}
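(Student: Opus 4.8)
The plan is to reduce everything to a single sign identity and then match it against the pairings already constructed in the preceding Proposition. Write $N = n(n-1)/2$ and recall that $\sgn_{\full_n}(\sigma) = \sgn(\sigma)^n$. First I would record that complementation behaves well under the $\W_n$-action: for any $\Delta \subset \full_n$ one has $\StabWn(\Delta) = \StabWn(\Delta^c)$, the assignment $\Delta \mapsto \Delta^c$ descends to isomorphism classes (since $\tau(\Delta)^c = \tau(\Delta^c)$), it preserves the vertex set, and it sends a graph with $r$ edges to one with $N - r$ edges. The crucial observation is that if $\sigma \in \StabWn(\Delta)$ then $\sigma$ permutes $E\Delta$ and $E\Delta^c$ separately inside $E\full_n = E\Delta \sqcup E\Delta^c$, so the edge-sign is multiplicative across this decomposition:
\[
\sgn_\Delta(\sigma)\,\sgn_{\Delta^c}(\sigma) = \sgn_{\full_n}(\sigma) = \sgn(\sigma)^n.
\]

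From this identity I would read off the three combinatorial correspondences. When $n$ is even the right-hand side is $1$, so $\sgn_\Delta(\sigma) = \sgn_{\Delta^c}(\sigma)$ for every $\sigma \in \StabWn(\Delta)$; hence $\Delta$ is invariant iff $\Delta^c$ is invariant, and comparing $\sgn(\sigma)\sgn_\Delta(\sigma)$ with $\sgn(\sigma)\sgn_{\Delta^c}(\sigma)$ shows that $\Delta$ is skew-invariant iff $\Delta^c$ is. When $n$ is odd the identity reads $\sgn_{\Delta^c}(\sigma) = \sgn(\sigma)\sgn_\Delta(\sigma)$, so $\Delta^c$ is invariant iff $\Delta$ is skew-invariant. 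Consequently $\graph \mapsto \graph^c$ restricts to bijections $\Graphs_n \to \Graphs_n$ and $\Graphs_n^\sgn \to \Graphs_n^\sgn$ for $n$ even, and to a bijection $\Graphs_n^\sgn \to \Graphs_n$ for $n$ odd, in each case shifting the number of edges from $r$ to $N - r$. Since these index sets label the bases of Theorems \ref{thm:generators} and \ref{thm:skewgenerators}, the assignments $\alpha_\graph \mapsto \alpha_{\graph^c}$ (and the twisted variants) send basis to basis bijectively and are therefore linear isomorphisms in the three displayed degrees.

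It remains to verify that these isomorphisms are the Poincaré duality maps, i.e. that complementation implements the nondegenerate pairings \eqref{pairing:norm}--\eqref{pairing:twist}. On the monomial basis of $\Lambda^\bullet(\Q\Ar_n)$ the wedge pairing $\mu_{\Delta'} \cup \mu_{\Delta''}$ (and its $\cup^s$, $\cup^t$ variants) lands in the top exterior power $\simeq \Q$ and vanishes unless $E\Delta' \sqcup E\Delta'' = E\full_n$, that is unless $\Delta'' = (\Delta')^c$, in which case it equals $\pm \mu_{\full_n}$. Expanding $\alpha_\graph$ (resp. $\alpha_\graph^\sgn$) as a signed sum of such monomials over the isomorphism class $\graph$, I conclude that the induced pairing on $\W_n$-invariants satisfies $\langle \alpha_\graph, \alpha_{\graph'} \rangle = 0$ unless $\graph' = \graph^c$. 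Reindexing the second basis by complementation then makes the Gram matrix diagonal; since the pairings on invariants are nondegenerate by Lemma \ref{lem:pairing} applied as in the preceding Proposition, every diagonal entry $\langle \alpha_\graph, \alpha_{\graph^c}\rangle$ is forced to be nonzero. Hence $\alpha_{\graph^c}$ is, up to a nonzero scalar, the Poincaré dual of $\alpha_\graph$, which identifies $\Delta \mapsto \Delta^c$ with the duality isomorphism.

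I expect the only real obstacle to be sign bookkeeping: establishing the multiplicativity $\sgn_\Delta(\sigma)\sgn_{\Delta^c}(\sigma) = \sgn(\sigma)^n$ cleanly, and ruling out cancellation in the diagonal entries $\langle \alpha_\graph, \alpha_{\graph^c}\rangle$. I would handle the latter not by an explicit computation but by invoking nondegeneracy of the invariant pairing, which forces nonvanishing on the diagonal once the off-diagonal vanishing is in hand; this sidesteps tracking the shuffle signs of Definitions \ref{def:coeff_plain}--\ref{def:coeff_mixed} by hand.
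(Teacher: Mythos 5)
Your proposal is correct and follows essentially the same route as the paper: the multiplicative sign identity $\sgn_\Delta(\sigma)\sgn_{\Delta^c}(\sigma) = \sgn_{\full_n}(\sigma) = \sgn(\sigma)^n$ yielding the three bijections between (skew-)invariant graph classes, combined with Lemma \ref{lem:pairing} and the nondegenerate pairings of the preceding Proposition. Your Gram-matrix argument (off-diagonal vanishing of $\langle \alpha_\graph, \alpha_{\graph'}\rangle$ unless $\graph' = \graph^c$, with nonzero diagonal forced by nondegeneracy) simply makes explicit what the paper compresses into the remark that the pairing is a special case of Theorem \ref{theo:product}.
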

\begin{proof} The results follows applying Lemma \ref{lem:pairing} to the pairings \eqref{pairing:norm}, \eqref{pairing:skew}, \eqref{pairing:twist} in the cases where Corollary \ref{cor:top} gives non-trivial groups. The pairing turns out to be a special case of Theorem \ref{theo:product}. In particular we have the following cases:
\begin{enumerate}
\item 	Let $\Delta$ be a subgraph of $\full_n$ with $n$ vertices, and $\sigma$ an automorphism of $\Delta$.
Then $\sigma$ is also an automorphism of $\Delta^c$, and we have $\sgn_\Delta(\sigma)\sgn_{\Delta^c}(\sigma)
= \sgn_{\full_n}(\sigma) = \sgn(\sigma)^{n-2}$.
If $n$ is even, we get $\sgn_\Delta(\sigma) = \sgn_{\Delta^c}(\sigma)$ and this yields that $\Delta \mapsto \Delta^c$ maps invariant graphs to invariant graphs, whence (1).
\item 
If $n$ is even, we get $\sgn_\Delta(\sigma) \sgn(\sigma)  \sgn_{\Delta^c}(\sigma)\sgn(\sigma)= \sgn_{\full_n}(\sigma) \sgn(\sigma)^2=1$. This implies $\sgn_\Delta(\sigma) \sgn(\sigma)= \sgn_{\Delta^c}(\sigma)\sgn(\sigma)$ and
hence $\Delta \mapsto \Delta^c$ maps skew-invariant graphs to skew-invariant graphs, whence (2).
\item 
If $n$ is odd, $\sgn(\sigma)\sgn_{\Delta}(\sigma) \sgn_{\Delta^c}(\sigma) = \sgn(\sigma)\sgn_{\full_n}(\sigma) = \sgn(\sigma)^{1+n-2}=1$. This imples that $\sgn(\sigma)\sgn_{\Delta}(\sigma) = \sgn_{\Delta^c} $ 
and hence $\Delta \mapsto \Delta^c$ maps skew-invariant graphs to invariant graphs, whence (3). \qedhere
\end{enumerate}	
\end{proof}

\begin{remark}
We notice that 
for a skew-invariant graph $\Delta \subset \full_n$ with $n$ odd and 
we can consider the graph $\Sigma\Delta$ given by embedding $\Delta$ as a subgraph of $\full_{n+2}$ and then adding all possible edges $(i,j)$ with $i \in \{1, \ldots, n\}$ and $j \in \{n+1, n+2\}$. The graph $\Sigma\Delta$ is skew-invariant.
\end{remark}

\subsection{Representation-theoretic description}

\begin{proposition}\label{prop:dim_not_skew}
	Let us assume $n \geq 4$ and let $b_{n,r} = \dim H^{r}(\Z \Ar_n; \Q)^{\W_n}$.   We have:
\begin{equation}\label{eq:dim_not_skew}
	b_{n,r}=
	\left(\sum_{p+q = r} \left\langle  [n-p,p] , \bigwedge^q [n-2,2] \right\rangle \right)
	\oplus \left(
	\sum_{p+q = r-1}\left\langle  [n-p,p], \bigwedge^q [n-2,2] \right\rangle \right)
\end{equation}
where $\langle \lambda, \mu \rangle$ denotes the scalar product of characters for $\W_n$.
\end{proposition}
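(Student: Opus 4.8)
The plan is to identify the invariant space $H^r(\Z\Ar_n;\Q)^{\W_n}$ as a multiplicity space and to compute that multiplicity via the standard $\W_n$-module structure on $H^\bullet(\Z\Ar_n;\Q)$. Recall that as a $\W_n$-representation one has $H^r(\Z\Ar_n;\Q) \simeq \bigwedge^r(\Q\Ar_n)$, where $\Q\Ar_n$ is the permutation module on the $\binom{n}{2}$ unordered pairs $\{i,j\}$, i.e.\ the permutation representation on $2$-element subsets of $\{1,\dots,n\}$. The dimension $b_{n,r} = \dim H^r(\Z\Ar_n;\Q)^{\W_n}$ is exactly the multiplicity of the trivial representation in $\bigwedge^r(\Q\Ar_n)$, which by Frobenius reciprocity equals $\langle \bigwedge^r(\Q\Ar_n), [n] \rangle$, the scalar product of the character of $\bigwedge^r(\Q\Ar_n)$ with the trivial character $[n]$.

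First I would decompose the permutation module $\Q\Ar_n$ into irreducibles. For $n \geq 4$ it is classical that the permutation representation on $2$-subsets decomposes as $\Q\Ar_n \simeq [n] \oplus [n-1,1] \oplus [n-2,2]$. This is the key structural input: it lets me write $\Q\Ar_n = \Q \oplus [n-1,1] \oplus [n-2,2]$ and hence split off the trivial and standard summands from the reduced part $[n-2,2]$. Using the exterior-power identity $\bigwedge^r(A \oplus B) \simeq \bigoplus_{p+q=r} \bigwedge^p(A) \otimes \bigwedge^q(B)$, and noting $\bigwedge^\bullet([n]) = [n] \oplus ([n]\text{ in degree }1)$ contributes only a degree-$0$ and degree-$1$ piece while the standard representation $[n-1,1]$ likewise needs to be handled, I would aim to reduce the exterior algebra computation to one involving only $\bigwedge^q[n-2,2]$.

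The cleaner route, which explains the specific shape of \eqref{eq:dim_not_skew}, is to treat the one-dimensional summand $\Q = [n]$ separately: $\bigwedge^\bullet(\Q\Ar_n) \simeq \bigwedge^\bullet(\Q) \otimes \bigwedge^\bullet([n-1,1]\oplus[n-2,2])$, and $\bigwedge^\bullet(\Q) = \Q \oplus \Q$ (degrees $0$ and $1$) accounts for the direct-sum of two terms with the index shift $r$ versus $r-1$ in the formula. Then for the remaining factor I would use that $\bigwedge^p[n-1,1] \simeq [n-p,1^p]$ (a standard fact about exterior powers of the standard representation), and compute $\langle [n-p,1^p] \otimes \bigwedge^q[n-2,2],\, [n]\rangle = \langle [n-p,1^p],\, (\bigwedge^q[n-2,2])^* \rangle = \langle [n-p,p'], \bigwedge^q[n-2,2]\rangle$ after using self-duality of real $\W_n$-representations and identifying the transpose/conjugate partition. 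The bookkeeping that turns $[n-p,1^p]$ into the two-row partition $[n-p,p]$ appearing in the statement is where I would need to be most careful.

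The main obstacle is precisely this last identification: justifying why the scalar product $\langle \bigwedge^r(\Q\Ar_n), [n]\rangle$ collapses to the advertised sum $\sum_{p+q=r}\langle [n-p,p], \bigwedge^q[n-2,2]\rangle$ (plus its $r-1$ shift), rather than a sum indexed by hook partitions $[n-p,1^p]$. I expect the resolution to come from a Pieri- or branching-type argument, or from directly computing $\langle \bigwedge^p[n-1,1] \otimes \bigwedge^q[n-2,2],[n]\rangle = \langle \bigwedge^p[n-1,1],(\bigwedge^q[n-2,2])^*\rangle$ and recognizing that pairing a hook $[n-p,1^p]$ against $\bigwedge^q[n-2,2]$ gives the same multiplicities as pairing the two-row shape $[n-p,p]$ against it, by a symmetry (e.g.\ transposition of diagrams combined with self-conjugacy in the relevant range, or an explicit character identity). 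Verifying this carefully for $n \geq 4$, where the decomposition $\Q\Ar_n = [n]\oplus[n-1,1]\oplus[n-2,2]$ is valid with the stated three distinct constituents, is the technical heart of the argument; the rest is formal manipulation of exterior powers and Frobenius reciprocity.
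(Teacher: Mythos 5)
Your derivation follows, step for step, the paper's own proof: decompose $\Q\Ar_n \simeq [n]\oplus[n-1,1]\oplus[n-2,2]$ for $n\geq 4$, expand $\bigwedge^r$ of the direct sum, split off the trivial summand to produce the $r$ versus $r-1$ shift, use $\bigwedge^p[n-1,1]\simeq [n-p,1^p]$, and extract the multiplicity of the trivial representation via self-duality. This lands you (and the paper) on
\[
b_{n,r} \;=\; \sum_{p+q=r}\left\langle [n-p,1^p],\ \bigwedge\nolimits^q [n-2,2]\right\rangle \;+\; \sum_{p+q=r-1}\left\langle [n-p,1^p],\ \bigwedge\nolimits^q [n-2,2]\right\rangle ,
\]
with the convention $[n-p,1^p]=0$ for $p\geq n$. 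The paper's proof ends exactly here, with hooks.

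The step you single out as ``the technical heart'' --- converting the hooks $[n-p,1^p]$ into the two-row shapes $[n-p,p]$ of the printed statement --- is not a step you failed to find; it is a step that cannot exist, because the printed formula is false and $[n-p,p]$ is a typo for $[n-p,1^p]$. Concretely, for $n=4$, $r=4$ the two-row formula contains the term $(p,q)=(2,1)$ in the $r-1$ sum, contributing $\langle [2,2],[2,2]\rangle = 1$, so it predicts $b_{4,4}\geq 1$; but $b_{4,4}=0$ (the only graphs with $4$ edges on $4$ vertices are the $4$-cycle and the triangle with a pendant edge, and both admit automorphisms acting oddly on edges; this agrees with the paper's own Table 1, which lists $H^4(\Gamma_4;\Q)=0$). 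Conversely, for $n=4$, $r=5$ the hook formula gives $\langle [1^4],\bigwedge^2[2,2]\rangle = \langle [1^4],[1^4]\rangle = 1$, matching $b_{4,5}=1$, while the two-row formula gives $0$. The symmetry you hoped would bridge the gap also fails structurally: since all $\W_n$-representations are self-dual, dualizing introduces no conjugation of partitions at all (your ``$(\bigwedge^q[n-2,2])^*$'' step should simply drop the star), and transposition of Young diagrams is not available either, as it corresponds to tensoring with the sign character (which changes the pairing) and moreover sends the hook $[n-p,1^p]$ to another hook, not to $[n-p,p]$. So your proof is already complete, and identical to the paper's, once the statement is read with $[n-p,1^p]$ in place of $[n-p,p]$; do not try to prove the formula as printed.
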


\begin{proof}
As a representation of $\W_n$, $\Q \Ar_n$ can be decomposed into irreducibles as
$[n-2,2] + [n-1,1]
+[n]
$ (see for example \cite{church_farb}). We have
$$
H^{r}(\Z \Ar_n; \Q) \simeq \bigwedge^r \Q \Ar_n \simeq \bigwedge^r\left(
[n-2,2] + [n-1,1]+[n] \right) \simeq \bigoplus_{a+b+c = r} \bigwedge^a [n-2,2] \otimes  \bigwedge^b [n-1,1] \otimes
\bigwedge^c [n]
$$
Since $\bigwedge^{\bullet} [n] = \bigwedge^{\bullet} \Q
= \bigwedge^{0} \Q + \bigwedge^1 \Q$
and $\bigwedge^p [n-1,1] = [n-p,1^p]$
this yields
$$
H^{r}(\Z \Ar_n; \Q) \simeq 
\left(\bigoplus_{p+q = r} [n-p,1^p] \otimes \bigwedge^q [n-2,2] 
\right) \oplus \left(
\bigoplus_{p+q = r-1}[n-p,1^p] \otimes \bigwedge^q [n-2,2] \right)
$$
with the convention $[n-p,1^p] = 0$ if $p \geq n$ or $p<0$. The result follows from the last equation.
\end{proof}

The Proposition above provides a reasonably fast algorithm to compute the first Betti numbers (see Table \ref{table:firstbetti}),
and the first terms of the Poincar\'e series for $\Gamma_{\infty}$. Computer computations of the formula in Equation \eqref{eq:dim_not_skew} give the first terms of the stable Poincar\'e series:
\begin{theorem} The Poincar\'e series for $\Gamma_{\infty}$ starts with
\begin{equation}
\begin{split}
1+ t+ t^4 +7t^5 +17t^6 +30t^7+ 88t^8+ 335t^9+ 1143t^{10}+ 3866t^{11} +  
 14289t^{12} + 56557t^{13}+\\+ 231012t^{14}+ 971537t^{15}+ 4238570t^{16}+ \ldots 
\end{split}	
\end{equation}
\end{theorem}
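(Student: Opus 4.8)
The plan is to read off each coefficient from the representation-theoretic formula of Proposition~\ref{prop:dim_not_skew}, evaluated far enough out that it already computes the stable Betti number. First I would reduce every coefficient to a single finite computation. By Proposition~\ref{prop:stab_untwisted} the restriction map $H^r(\Gamma_{n+1};\Q)\to H^r(\Gamma_n;\Q)$ is an isomorphism as soon as there is no invariant graph with $r$ edges and no isolated vertex on $n+1$ vertices; since such a graph has at most $2r$ vertices, this happens whenever $n\ge 2r$. Hence $\dim H^r(\Gamma_\infty;\Q)=b_{n,r}$ for every $n\ge 2r$, with $b_{n,r}=\dim H^r(\Z\Ar_n;\Q)^{\W_n}$, and to obtain all coefficients up to $t^{16}$ it suffices to fix $n=32$ and compute $b_{32,r}$ for $0\le r\le 16$.

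Next I would evaluate $b_{n,r}$ via Equation~\eqref{eq:dim_not_skew}; the only substantial ingredient is the family of scalar products $\langle [n-p,p],\bigwedge^q[n-2,2]\rangle$ in the character ring of $\W_n$. I would compute the character of $\bigwedge^q[n-2,2]$ from the power-sum characters $g\mapsto\chi_{[n-2,2]}(g^k)$ through Newton's identities, using that the value of $\bigwedge^q V$ at $g$ is the $q$-th elementary symmetric function of the eigenvalues of $g$ on $V$; the numbers $\chi_{[n-2,2]}(g^k)$ are themselves furnished by the Murnaghan--Nakayama rule, the class of $g^k$ being determined by the cycle type of $g$ together with $k$. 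Each scalar product is then the inner product $\frac{1}{n!}\sum_g \chi_{[n-p,p]}(g)\,\chi_{\bigwedge^q[n-2,2]}(g)$, summed over conjugacy classes of $\W_n$ weighted by their sizes, and summing over $p+q=r$ and $p+q=r-1$ as in \eqref{eq:dim_not_skew} returns $b_{n,r}$ and hence the asserted coefficient of $t^r$.

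The main obstacle is one of scale rather than of principle: for $n=32$ the irreducible $[n-2,2]$ has dimension $n(n-3)/2=464$, one must reach its sixteenth exterior power, and $\W_{32}$ has several thousand conjugacy classes, so a naive class-by-class evaluation is heavy. I would therefore work in the ring of symmetric functions, where exterior powers correspond to elementary symmetric functions: forming the generating series $\sum_q(\bigwedge^q[n-2,2])\,t^q$ once via the $e$--$p$ transition and pairing it against the Schur functions attached to the partitions $[n-p,p]$ produces all the needed multiplicities at once. As an independent check I would recompute the same coefficients from the purely combinatorial product formula \eqref{eq:poinc}, by enumerating connected invariant graphs ordered by number of edges; agreement of the two methods certifies the listed values.
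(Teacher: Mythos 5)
Your proposal is essentially the paper's own proof: the theorem is obtained there precisely by computer evaluation of the formula in Equation \eqref{eq:dim_not_skew}, with the stabilization range $n\ge 2r$ from Proposition \ref{prop:stab_untwisted} guaranteeing that the computed $b_{n,r}$ are the stable Betti numbers, and your symmetric-function implementation plus the graph-enumeration cross-check via \eqref{eq:poinc} are just concrete ways of carrying out that computation.

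One warning before you execute it: you copied the scalar products from the printed statement of Proposition \ref{prop:dim_not_skew}, which reads $\left\langle [n-p,p],\bigwedge^q[n-2,2]\right\rangle$, but the two-row partition there is a typo --- the proof of that proposition derives the hooks $[n-p,1^p]=\bigwedge^p[n-1,1]$, and the two choices give different numbers. For instance, with hooks one gets $b_{5,5}=4$, matching Table \ref{table:firstbetti}, whereas the two-row partitions give $b_{5,5}=1$. So your pairing must be against the Schur functions of the hook partitions $[n-p,1^p]$; as written, your primary computation would produce wrong coefficients, and only your independent check against \eqref{eq:poinc} would reveal the discrepancy.
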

One also observes the existence of a factor $(1+t)^m$ of the Poincar\'e polynomial
of $H^\bullet(\Gamma_{n};\Q)$ for small $n$, in Table \ref{table:gamfactpol}. For $n = 9,10$, the value of $m$ is $4,5$. 

\afterpage{%
	\begin{landscape}
		\centering 
$$
\begin{array}{|c||c|c|c|c|c|c|c|c|c|c|c|c|c|c|c|c|c|c|c|c|c|}
\hline
H^r(\Gamma_n;\Q) & 0 & 1 & 2 & 3 & 4 & 5 & 6 & 7 & 8 & 9 & 10 & 11 & 12 & 13 & 14 & 15 & 16 & 17 & 18 & 19 & 20 \\
\hline
\hline
\Gamma_4 & 1 & 1 & 0 & 0 & 0 & 1 & 1 & & & & & & & & & &  & & & & \\ 
\hline
\Gamma_5 & 1 & 1 & 0 & 0 & 1 & 4 & 4 & 1 & 0 & 0 & 0 &  & & & & & & & & &  \\ 
\hline
\Gamma_6 & 1 & 1 & 0 & 0 & 1 & 6 & 10 & 9 & 9 & 10 & 6 & 1 & 0 & 0 & 1 & 1 & & & & &  \\ 
\hline
\Gamma_7 & 1 & 1 & 0 & 0 & 1 & 7 & 15 & 20 & 37 & 72 & 88 & 71 & 48 & 35 & 32 & 22 & 6 & 0 & 0 &0 & \ldots  \\ 
\hline
\Gamma_8 & 1 & 1 & 0 & 0 & 1 & 7 & 17 & 28 & 67 & 182 & 364 & 566 & 767 & 936 & 1006 & 936 & 767 & 566 & 364 & 182 & \dots \\
\hline
 \Gamma_9\footnotemark[1] & 1 &  1 & 0& 0 & 1 & 7 & 17 & 30&  84&  278&  738 &1673& 3499& 6619& 10855& 15464& 19862& 23572& 25458 & 24285 & \dots \\ 
 \hline
\end{array}
$$
\captionof{table}{First Betti numbers for $\Gamma_n$}
\label{table:firstbetti}

	$$
	\begin{array}{|c|l|}
		\hline
		\Gamma_3 & t+1  \\ 
		\hline
		\Gamma_4 & (t+1)^2(t^4 - t^3 + t^2 - t + 1) \\ 
		\hline
		\Gamma_5 & (t+1)^2( t^5 + 2t^4 - t^3 + t^2 - t + 1) \\
		\hline
		\Gamma_6 &  (t+1)^3(t^{12} - 2t^{11} + 3t^{10} - 4t^9 + 6t^8 - 3t^7 + 5t^6 - 
		3t^5 + 6t^4 - 4t^3 + 3t^2 - 2t + 1)\\
		\hline
		\Gamma_7 &  (t+1)^3(6t^{13} + 4t^{12} + 2t^{11} + 11t^{10} + 5t^9 + 21t^8 - t^7 + 7t^6 - 2t^5 + 
		6t^4 - 4t^3 + 3t^2 - 2t + 1)\\
		\hline
		\Gamma_8 &  (t+1)^4(t^{24} - 3t^{23} + 6t^{22} - 10t^{21} + 16t^{20} - 18t^{19} + 27t^{18} - 26t^
		{17} + 65t^{16} - 12t^{15} + 99t^{14} 
		+ 8t^{13} + 124t^{12} + \dots + 16t^4 - 10t^3 + 
		6t^2 - 3t + 1) \\
		\hline        
	\end{array}
	$$
	\captionof{table}{Factorized Poincar\'e polynomials for $\Gamma_n$}
	\label{table:gamfactpol}

	$$
	\begin{array}{|c||c|c|c|c|c|c|c|c|c|c|c|c|c|c|c|c|c|c|c|c|c|}
		\hline
		H^r(\Gamma_n;\qsgn) & 0 & 1 & 2 & 3 & 4 & 5 & 6 & 7 & 8 & 9 & 10 & 11 & 12 & 13 & 14 & 15 & 16 & 17 & 18 & 19 & 20 \\
		\hline
		\Gamma_4 & 0 & 0 & 1 & 2 & 1 & 0 & 0 & & & & & & & & & &   & & &   &\\ 
		\hline 
		\Gamma_5 & 0 & 0 & 0 & 1 & 4 & 4 & 1 & 0 & 0 & 1 & 1 & & & & & & & & &   &\\ 
		\hline 
		\Gamma_6 & 0 & 0 & 0 & 0 & 3 & 9 & 10 & 6 & 6 & 10 & 9 & 3 & 0 & 0 & 0 & 0 & && &   & \\ 
		\hline 
		\Gamma_7 & 0 & 0 & 0 & 0 & 0 & 6 & 22 & 32 & 35 & 48 & 71 & 88 & 72 & 37 & 20 & 15 & 7 & 1 & 0 & 0  & \ldots \\ 
		\hline 
		\Gamma_8 & 0 & 0 & 0 & 0 & 0 & 1 & 16 & 53 & 97 & 160 & 301 & 551 & 815 & 955 & 982 & 955 & 815 &551& 301 & 160  &  \dots  \\
		\hline
		\Gamma_9\footnotemark[2] & 0 & 0& 0& 0& 0& 0& 3& 34& 135& 327& 716& 1637& 3525& 6559& 10549& 15282& 20325& 
		 24285 & 25458 & 23572 & \dots \\ 
		\hline
	\end{array}
	$$
	\captionof{table}{First skew Betti numbers for $\Gamma_n$}
	\label{table:firstskewbetti}
\footnotetext[1]{The full list of Betti numbers is:$ 1 $, $  1 $, $ 0$, $ 0 $, $ 1 $, $ 7 $, $ 17 $, $ 30$, $  84$, $  278$, $  738 $, $1673$, $ 3499$, $ 6619$, $ 10855$, $ 15464$, $ 19862$, $ 23572$, $ 25458 $, $ 24285 $, $  20325$, $ 15282$, $ 10549$, $ 6559$, $ 3525$, $ 1637$, $ 716$, $ 327$, $ 135$, $ 34$, $ 3$, $ 0$, $ 0$, $ 0$, $ 0$, $ 0$, $ 0 $}
\footnotetext[2]{The full list of Betti numbers is:$ 0 $, $ 0$, $ 0$, $ 0$, $ 0$, $ 0$, $ 3$, $ 34$, $ 135$, $ 327$, $ 716$, $ 1637$, $ 3525$, $ 6559$, $ 10549$, $ 15282$, $ 20325$, $  24285 $, $ 25458 $, $ 23572 $, $ 19862$, $ 15464$, $ 10855$, $ 6619$, $ 3499$, $ 1673$, $ 738$, $  278$, $ 84$, $ 30$, $ 17$, $ 7$, $ 1$, $ 0$, $ 0$, $ 1$, $ 0$.}
\end{landscape}
\clearpage
}

From the isomorphism $H^{r}(\Z \Ar_n; \qsgn) \simeq \qsgn \otimes H^{r}(\Z \Ar_n; \Q)$
one gets, using the same argument, the analogous of Proposition \ref{prop:dim_not_skew} for the case of skew coefficients.
\begin{proposition}
Let us assume $n \geq 4$ and let $b_{n,r}^\sgn = \dim H^{r}(\Z \Ar_n; \qsgn)^{\W_n}$.   We have:

\begin{equation}\label{eq:skew_betti}
	b_{n,r}^\sgn =
\left(\sum_{p+q = r} \!\left\langle\!  \qsgn \otimes [n-p,1^p] , \bigwedge^q [n-2,2] \!\right\rangle\! \right)\!
 \oplus \!\left(
\sum_{p+q = r-1}\!\left\langle\! \qsgn \otimes  [n-p,1^p], \bigwedge^q [n-2,2] \!\right\rangle \right).
\end{equation}
\end{proposition}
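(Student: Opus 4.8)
The plan is to run the argument of Proposition~\ref{prop:dim_not_skew} essentially verbatim, the only extra ingredient being the twist by the sign representation recorded in the isomorphism~\eqref{eq:skew_cohom}. Concretely, I would start from the $\W_n$-equivariant identification $H^{r}(\Z \Ar_n; \qsgn) \simeq \qsgn \otimes H^{r}(\Z \Ar_n; \Q)$ and feed into it the decomposition of $H^{r}(\Z \Ar_n; \Q)$ already established in the proof of Proposition~\ref{prop:dim_not_skew}.

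First I would recall that in that proof one writes $\Q\Ar_n \simeq [n-2,2]+[n-1,1]+[n]$, takes exterior powers, and uses $\bigwedge^{\bullet}[n] = \bigwedge^0 \Q \oplus \bigwedge^1 \Q$ together with $\bigwedge^p[n-1,1] = [n-p,1^p]$ to arrive at
\[
H^{r}(\Z \Ar_n; \Q) \simeq \left(\bigoplus_{p+q=r}[n-p,1^p]\otimes\bigwedge^q[n-2,2]\right) \oplus \left(\bigoplus_{p+q=r-1}[n-p,1^p]\otimes\bigwedge^q[n-2,2]\right).
\]
Tensoring this isomorphism with the one-dimensional module $\qsgn$, and using that the twist commutes with direct sums and may be absorbed into a single tensor factor, I would obtain
\[
H^{r}(\Z \Ar_n; \qsgn) \simeq \left(\bigoplus_{p+q=r}(\qsgn\otimes[n-p,1^p])\otimes\bigwedge^q[n-2,2]\right) \oplus \left(\bigoplus_{p+q=r-1}(\qsgn\otimes[n-p,1^p])\otimes\bigwedge^q[n-2,2]\right).
\]

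Next I would pass to invariants. Since $\W_n$ is finite and the coefficients are rational, $\dim V^{\W_n}$ equals the multiplicity $\langle \mathbf{1}, V\rangle$ of the trivial character in $V$. For each summand $V = (\qsgn\otimes[n-p,1^p])\otimes\bigwedge^q[n-2,2]$, the adjunction $\langle \mathbf{1}, A\otimes B\rangle = \langle A^{*}, B\rangle$, together with the fact that every irreducible $\W_n$-representation is self-dual (its character is real), yields $\dim V^{\W_n} = \langle \qsgn\otimes[n-p,1^p], \bigwedge^q[n-2,2]\rangle$. Summing over the two indexing ranges $p+q=r$ and $p+q=r-1$ then reproduces exactly~\eqref{eq:skew_betti}.

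I do not expect any genuine obstacle: the statement is a formal consequence of Proposition~\ref{prop:dim_not_skew} and~\eqref{eq:skew_cohom}, and the only point deserving a word of justification is that tensoring by the invertible one-dimensional $\qsgn$ is exact and passes through the exterior-power bookkeeping unchanged, so the entire Schur-functor decomposition is simply twisted factorwise. Everything else is the same character-theoretic computation as before, which is why the proof may be left to the reader.
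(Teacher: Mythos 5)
Your proposal is correct and follows essentially the same route as the paper: the paper's proof of this proposition is precisely the observation that the isomorphism \eqref{eq:skew_cohom} lets one twist the decomposition obtained in the proof of Proposition \ref{prop:dim_not_skew} by $\qsgn$ and then repeat the same invariant/character computation, which is exactly what you do. No gap to report.
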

As before this enables us to compute the first skew Betti numbers (see Table \ref{table:firstskewbetti}).

\begin{remark}
The vanishing of $H^{r}(\Z \Ar_n; \qsgn)^{\W_n}$ for $n$ large can also be deduced from Equation \eqref{eq:skew_betti},
as $[n-2,2]$ is a constituent of $U^{\otimes 2}$ where $U$ is the permutation representation $\W_n < \GL_n(\Q)$, hence any constituent of 
$\bigwedge^q [n-2,2] \subset [n-2,2]^{\otimes q}$ is a constituent of $U^{\otimes 2q} = (\mathrm{Ind}_{S_{n-1}}^{\W_n} \mathbbm{1})^{\otimes 2q}$,
and the Young diagram of such a constituent has, by Young's rule, at most $2q$ rows. Since $\qsgn \otimes  [n-p,1^p]$ has $n-p$ rows, this cohomology group
can be nonzero only if $n \leq p+2q$ whenever $p+q \in \{ r-1,r\}$.
\end{remark}

\subsection{Low-dimensional cohomology}

With trivial coefficients, natural bases (up to a sign for each basis vector) are thus given,
for $H^1$ by the only simple graph with 1 edge, for $H^4$ by the linear graph on 5 vertices
and for $H^5$ by the following 7 graphs 

\begin{center}
\begin{tikzpicture}[scale=.4]
\coordinate (A1) at (90:2);
\draw (A1) node {$\bullet$};
\coordinate (A2) at (180:2);
\draw (A2) node {$\bullet$};
\coordinate (A3) at (270:2);
\draw (A3) node {$\bullet$};
\coordinate (A4) at (360:2);
\draw (A4) node {$\bullet$};
\draw (A1) -- (A3);
\draw (A1) -- (A2);
\draw (A1) -- (A4);
\draw (A3) -- (A2);
\draw (A3) -- (A4);
\end{tikzpicture}
\begin{tikzpicture}[scale=.4]
\coordinate (A1) at (72:2);
\draw (A1) node {$\bullet$};
\coordinate (A2) at (144:2);
\draw (A2) node {$\bullet$};
\coordinate (A3) at (216:2);
\draw (A3) node {$\bullet$};
\coordinate (A4) at (288:2);
\draw (A4) node {$\bullet$};
\coordinate (A5) at (360:2);
\draw (A5) node {$\bullet$};
\draw (A1) -- (A2);
\draw (A1) -- (A4);
\draw (A1) -- (A5);
\draw (A2) -- (A4);
\draw (A2) -- (A3);
\end{tikzpicture}\begin{tikzpicture}[scale=.4]
\coordinate (A1) at (72:2);
\draw (A1) node {$\bullet$};
\coordinate (A2) at (144:2);
\draw (A2) node {$\bullet$};
\coordinate (A3) at (216:2);
\draw (A3) node {$\bullet$};
\coordinate (A4) at (288:2);
\draw (A4) node {$\bullet$};
\coordinate (A5) at (360:2);
\draw (A5) node {$\bullet$};
\draw (A1) -- (A2);
\draw (A1) -- (A4);
\draw (A1) -- (A5);
\draw (A2) -- (A3);
\draw (A4) -- (A3);
\end{tikzpicture}
\begin{tikzpicture}[scale=.4]
\coordinate (A1) at (60:2);
\draw (A1) node {$\bullet$};
\coordinate (A2) at (120:2);
\draw (A2) node {$\bullet$};
\coordinate (A3) at (180:2);
\draw (A3) node {$\bullet$};
\coordinate (A4) at (240:2);
\draw (A4) node {$\bullet$};
\coordinate (A5) at (300:2);
\draw (A5) node {$\bullet$};
\coordinate (A6) at (360:2);
\draw (A6) node {$\bullet$};
\draw (A1) -- (A2);
\draw (A1) -- (A6);
\draw (A1) -- (A4);
\draw (A2) -- (A3);
\draw (A6) -- (A5);
\end{tikzpicture}
\begin{tikzpicture}[scale=.4]
\coordinate (A1) at (72:2);
\draw (A1) node {$\bullet$};
\coordinate (A2) at (144:2);
\draw (A2) node {$\bullet$};
\coordinate (A3) at (216:2);
\draw (A3) node {$\bullet$};
\coordinate (A4) at (288:2);
\draw (A4) node {$\bullet$};
\coordinate (A5) at (360:2);
\draw (A5) node {$\bullet$};
\draw (A1) -- (A2);
\draw (A1) -- (A5);
\draw (A2) -- (A3);
\draw (A5) -- (A4);
\draw (A3) -- (A4);
\end{tikzpicture}
\begin{tikzpicture}[scale=.4]
\coordinate (A1) at (60:2);
\draw (A1) node {$\bullet$};
\coordinate (A2) at (120:2);
\draw (A2) node {$\bullet$};
\coordinate (A3) at (180:2);
\draw (A3) node {$\bullet$};
\coordinate (A4) at (240:2);
\draw (A4) node {$\bullet$};
\coordinate (A5) at (300:2);
\draw (A5) node {$\bullet$};
\coordinate (A6) at (360:2);
\draw (A6) node {$\bullet$};
\draw (A1) -- (A2);
\draw (A1) -- (A6);
\draw (A2) -- (A3);
\draw (A6) -- (A5);
\draw (A3) -- (A4);
\end{tikzpicture}
\begin{tikzpicture}[scale=.4]
\coordinate (A1) at (51.999074699:2);
\draw (A1) node {$\bullet$};
\coordinate (A2) at (102.8571428571:2);
\draw (A2) node {$\bullet$};
\coordinate (A3) at (154.9635917385:2);
\draw (A3) node {$\bullet$};
\coordinate (A4) at (205.7142857143:2);
\draw (A4) node {$\bullet$};
\coordinate (A5) at (257.1428571429:2);
\draw (A5) node {$\bullet$};
\coordinate (A6) at (308.5714285714:2);
\draw (A6) node {$\bullet$};
\coordinate (A7) at (360:2);
\draw (A7) node {$\bullet$};
\draw (A1) -- (A2);
\draw (A1) -- (A7);
\draw (A2) -- (A3);
\draw (A7) -- (A6);
\draw (A4) -- (A5);
\end{tikzpicture}
\end{center}

As an example of the cup-product formula, we get the following

\begin{center}
\begin{tikzpicture}
\begin{scope}[scale=.4]
\coordinate (A1) at (51.999074699:2);
\draw (A1) node {$\bullet$};
\coordinate (A2) at (102.8571428571:2);
\draw (A2) node {$\bullet$};
\coordinate (A3) at (154.9635917385:2);
\draw (A3) node {$\bullet$};
\coordinate (A4) at (205.7142857143:2);
\coordinate (A5) at (257.1428571429:2);
\coordinate (A6) at (308.5714285714:2);
\draw (A6) node {$\bullet$};
\coordinate (A7) at (360:2);
\draw (A7) node {$\bullet$};
\draw (A1) -- (A2);
\draw (A1) -- (A7);
\draw (A2) -- (A3);
\draw (A7) -- (A6);
\end{scope}
\begin{scope}[shift={(1.1,0)}]
\draw (0,0) node {$\cup$};
\end{scope}
\begin{scope}[scale=.4,shift={(5,2)}]
\coordinate (A1) at (51.999074699:2);
\coordinate (A2) at (102.8571428571:2);
\coordinate (A3) at (154.9635917385:2);
\coordinate (A4) at (205.7142857143:2);
\draw (A4) node {$\bullet$};
\coordinate (A5) at (257.1428571429:2);
\draw (A5) node {$\bullet$};
\coordinate (A6) at (308.5714285714:2);
\coordinate (A7) at (360:2);
\draw (A4) -- (A5);
\end{scope}
\begin{scope}[shift={(2.3,0)}]
\draw (0,0) node {$= \pm$};
\end{scope}
\begin{scope}[scale=.4,shift={(9,0)}]
\coordinate (A1) at (72:2);
\draw (A1) node {$\bullet$};
\coordinate (A2) at (144:2);
\draw (A2) node {$\bullet$};
\coordinate (A3) at (216:2);
\draw (A3) node {$\bullet$};
\coordinate (A4) at (288:2);
\draw (A4) node {$\bullet$};
\coordinate (A5) at (360:2);
\draw (A5) node {$\bullet$};
\draw (A1) -- (A2);
\draw (A1) -- (A4);
\draw (A1) -- (A5);
\draw (A2) -- (A4);
\draw (A2) -- (A3);
\end{scope}
\begin{scope}[shift={(4.8,0)}]
\draw (0,0) node {$\pm$};
\end{scope}
\begin{scope}[scale=.4,shift={(15,0)}]
\coordinate (A1) at (60:2);
\draw (A1) node {$\bullet$};
\coordinate (A2) at (120:2);
\draw (A2) node {$\bullet$};
\coordinate (A3) at (180:2);
\draw (A3) node {$\bullet$};
\coordinate (A4) at (240:2);
\draw (A4) node {$\bullet$};
\coordinate (A5) at (300:2);
\draw (A5) node {$\bullet$};
\coordinate (A6) at (360:2);
\draw (A6) node {$\bullet$};
\draw (A1) -- (A2);
\draw (A1) -- (A6);
\draw (A1) -- (A4);
\draw (A2) -- (A3);
\draw (A6) -- (A5);
\end{scope}
\begin{scope}[shift={(7.1,0)}]
\draw (0,0) node {$\pm$};
\end{scope}
\begin{scope}[scale=.4,shift={(20,0)}]
\coordinate (A1) at (72:2);
\draw (A1) node {$\bullet$};
\coordinate (A2) at (144:2);
\draw (A2) node {$\bullet$};
\coordinate (A3) at (216:2);
\draw (A3) node {$\bullet$};
\coordinate (A4) at (288:2);
\draw (A4) node {$\bullet$};
\coordinate (A5) at (360:2);
\draw (A5) node {$\bullet$};
\draw (A1) -- (A2);
\draw (A1) -- (A5);
\draw (A2) -- (A3);
\draw (A5) -- (A4);
\draw (A3) -- (A4);
\end{scope}
\begin{scope}[shift={(9.2,0)}]
\draw (0,0) node {$\pm$};
\end{scope}
\begin{scope}[scale=.4,shift={(26,0)}]
\coordinate (A1) at (51.999074699:2);
\draw (A1) node {$\bullet$};
\coordinate (A2) at (102.8571428571:2);
\draw (A2) node {$\bullet$};
\coordinate (A3) at (154.9635917385:2);
\draw (A3) node {$\bullet$};
\coordinate (A4) at (205.7142857143:2);
\draw (A4) node {$\bullet$};
\coordinate (A5) at (257.1428571429:2);
\draw (A5) node {$\bullet$};
\coordinate (A6) at (308.5714285714:2);
\draw (A6) node {$\bullet$};
\coordinate (A7) at (360:2);
\draw (A7) node {$\bullet$};
\draw (A1) -- (A2);
\draw (A1) -- (A7);
\draw (A2) -- (A3);
\draw (A7) -- (A6);
\draw (A4) -- (A5);
\end{scope}
\end{tikzpicture}
\end{center}
We recall that the coefficients in the formula above are defined up to a sign, that can be determined according to the choice of the representative of each isomorphism class of graphs as subgraph of the full graph as in Section \ref{sect:combHGammanQ}. In the following formula we explicitly choose the representatives  providing an ordering of the vertices of the graphs by means of the labeling and we compute the sign of the coefficients accordingly.

\begin{tikzpicture}
\begin{scope}[scale=.4]
\coordinate (A1) at (51.999074699:2);
\draw (A1) node {$\bune3$};
\coordinate (A2) at (102.8571428571:2);
\draw (A2) node {$\buno2$};
\coordinate (A3) at (154.9635917385:2);
\draw (A3) node {$\buno1$};
\coordinate (A4) at (205.7142857143:2);
\coordinate (A5) at (257.1428571429:2);
\coordinate (A6) at (308.5714285714:2);
\draw (A6) node {$\buno5$};
\coordinate (A7) at (360:2);
\draw (A7) node {$\buno4$};
\draw (A1) -- (A2);
\draw (A1) -- (A7);
\draw (A2) -- (A3);
\draw (A7) -- (A6);
\end{scope}

\begin{scope}[shift={(1.1,0)}]
\draw (0,0) node {$\cup$};
\end{scope}
\begin{scope}[scale=.4,shift={(5,2)}]
\coordinate (A1) at (51.999074699:2);
\coordinate (A2) at (102.8571428571:2);
\coordinate (A3) at (154.9635917385:2);
\coordinate (A4) at (205.7142857143:2);
\draw (A4) node {$\bune1$};
\coordinate (A5) at (257.1428571429:2);
\draw (A5) node {$\buso2$};
\coordinate (A6) at (308.5714285714:2);
\coordinate (A7) at (360:2);
\draw (A4) -- (A5);
\end{scope}
\begin{scope}[shift={(2.3,0)}]
\draw (0,0) node {$= $};
\end{scope}
\begin{scope}[scale=.4,shift={(9,0)}]
\coordinate (A1) at (72:2);
\draw (A1) node {$\bune4$};
\coordinate (A2) at (144:2);
\draw (A2) node {$\buno2$};
\coordinate (A3) at (216:2);
\draw (A3) node {$\buno1$};
\coordinate (A4) at (288:2);
\draw (A4) node {$\bune3$};
\coordinate (A5) at (360:2);
\draw (A5) node {$\bune5$};
\draw (A1) -- (A2);
\draw (A1) -- (A4);
\draw (A1) -- (A5);
\draw (A2) -- (A4);
\draw (A2) -- (A3);
\end{scope}
	\begin{scope}[shift={(4.8,0)}]
	\draw (0,0) node {$-$};
	\end{scope}
	\begin{scope}[scale=.4,shift={(15,0)}]
	\coordinate (A1) at (60:2);
	\draw (A1) node {$\bune3$};
	\coordinate (A2) at (120:2);
	\draw (A2) node {$\buno2$};
	\coordinate (A3) at (180:2);
	\draw (A3) node {$\buse1$};
	\coordinate (A4) at (240:2);
	\draw (A4) node {$\buno6$};
	\coordinate (A5) at (300:2);
	\draw (A5) node {$\buno5$};
	\coordinate (A6) at (360:2);
	\draw (A6) node {$\buso4$};
	\draw (A1) -- (A2);
	\draw (A1) -- (A6);
	\draw (A1) -- (A4);
	\draw (A2) -- (A3);
	\draw (A6) -- (A5);
	\end{scope}
	\begin{scope}[shift={(7.1,0)}]
	\draw (0,0) node {$-$};
	\end{scope}
	\begin{scope}[scale=.4,shift={(20,0)}]
	\coordinate (A1) at (72:2);
	\draw (A1) node {$\bune3$};
	\coordinate (A2) at (144:2);
	\draw (A2) node {$\buno2$};
	\coordinate (A3) at (216:2);
	\draw (A3) node {$\buso1$};
	\coordinate (A4) at (288:2);
	\draw (A4) node {$\buse5$};
	\coordinate (A5) at (360:2);
	\draw (A5) node {$\bune4$};
	\draw (A1) -- (A2);
	\draw (A1) -- (A5);
	\draw (A2) -- (A3);
	\draw (A5) -- (A4);
	\draw (A3) -- (A4);
	\end{scope}
	\begin{scope}[shift={(9.2,0)}]
	\draw (0,0) node {$+$};
	\end{scope}
	\begin{scope}[scale=.4,shift={(26,0)}]
	\coordinate (A1) at (51.999074699:2);
	\draw (A1) node {$\bune3$};
	\coordinate (A2) at (102.8571428571:2);
	\draw (A2) node {$\buno2$};
	\coordinate (A3) at (154.9635917385:2);
	\draw (A3) node {$\buno1$};
	\coordinate (A4) at (205.7142857143:2);
	\draw (A4) node {$\bune6$};
	\coordinate (A5) at (257.1428571429:2);
	\draw (A5) node {$\bune7$};
	\coordinate (A6) at (308.5714285714:2);
	\draw (A6) node {$\buse5$};
	\coordinate (A7) at (360:2);
	\draw (A7) node {$\bune4$};
	\draw (A1) -- (A2);
	\draw (A1) -- (A7);
	\draw (A2) -- (A3);
	\draw (A7) -- (A6);
	\draw (A4) -- (A5);
	\end{scope}

	\end{tikzpicture}

With skew coefficients, a basis for $H^3(\Gamma_4;\qsgn)$ is given by the following two graphs, the pink one
on 4 vertices surviving inside $H^3(\Gamma_5;\qsgn)$ and providing a basis there.

\begin{center}

\begin{tikzpicture}[scale=.19]
\coordinate (A1) at (120:3);
\draw (A1) node {$\bullet$};
\coordinate (A2) at (240:3);
\draw (A2) node {$\bullet$};
\coordinate (A3) at (360:3);
\draw (A3) node {$\bullet$};
\draw (A1) -- (A2);
\draw (A1) -- (A3);
\draw (A2) -- (A3);
\end{tikzpicture}
\begin{tikzpicture}[scale=.19,color=pink]
\coordinate (A1) at (90:3);
\draw (A1) node {$\bullet$};
\coordinate (A2) at (180:3);
\draw (A2) node {$\bullet$};
\coordinate (A3) at (270:3);
\draw (A3) node {$\bullet$};
\coordinate (A4) at (360:3);
\draw (A4) node {$\bullet$};
\draw (A1) -- (A2);
\draw (A1) -- (A3);
\draw (A1) -- (A4);
\end{tikzpicture}

\end{center}
Bases for $H^5(\Gamma_n;\qsgn)$ are provided in Table \ref{tab:H5GQeps}, with similar color conventions.

\begin{table}
$$
\begin{array}{|ll|}
\hline
\Gamma_5  &
\begin{tikzpicture}[scale=.19,color=pink]
\coordinate (A1) at (72:3);
\draw (A1) node {$\bullet$};
\coordinate (A2) at (144:3);
\draw (A2) node {$\bullet$};
\coordinate (A3) at (216:3);
\draw (A3) node {$\bullet$};
\coordinate (A4) at (288:3);
\draw (A4) node {$\bullet$};
\coordinate (A5) at (360:3);
\draw (A5) node {$\bullet$};
\draw (A1) -- (A2);
\draw (A1) -- (A3);
\draw (A1) -- (A4);
\draw (A1) -- (A5);
\draw (A2) -- (A3);
\end{tikzpicture}
\begin{tikzpicture}[scale=.19,color=pink]
\coordinate (A1) at (72:3);
\draw (A1) node {$\bullet$};
\coordinate (A2) at (144:3);
\draw (A2) node {$\bullet$};
\coordinate (A3) at (216:3);
\draw (A3) node {$\bullet$};
\coordinate (A4) at (288:3);
\draw (A4) node {$\bullet$};
\coordinate (A5) at (360:3);
\draw (A5) node {$\bullet$};
\draw (A1) -- (A3);
\draw (A1) -- (A2);
\draw (A1) -- (A5);
\draw (A2) -- (A3);
\draw (A3) -- (A4);
\end{tikzpicture}
\begin{tikzpicture}[scale=.19,color=pink]
\coordinate (A1) at (72:3);
\draw (A1) node {$\bullet$};
\coordinate (A2) at (144:3);
\draw (A2) node {$\bullet$};
\coordinate (A3) at (216:3);
\draw (A3) node {$\bullet$};
\coordinate (A4) at (288:3);
\draw (A4) node {$\bullet$};
\coordinate (A5) at (360:3);
\draw (A5) node {$\bullet$};
\draw (A1) -- (A2);
\draw (A1) -- (A3);
\draw (A1) -- (A4);
\draw (A2) -- (A3);
\draw (A4) -- (A5);
\end{tikzpicture}
\begin{tikzpicture}[scale=.19,color=pink]
\coordinate (A1) at (72:3);
\draw (A1) node {$\bullet$};
\coordinate (A2) at (144:3);
\draw (A2) node {$\bullet$};
\coordinate (A3) at (216:3);
\draw (A3) node {$\bullet$};
\coordinate (A4) at (288:3);
\draw (A4) node {$\bullet$};
\coordinate (A5) at (360:3);
\draw (A5) node {$\bullet$};
\draw (A1) -- (A2);
\draw (A2) -- (A3);
\draw (A1) -- (A5);
\draw (A3) -- (A4);
\draw (A4) -- (A5);
\end{tikzpicture}
 \\
\hline
\Gamma_6 & 
\begin{tikzpicture}[scale=.19]
\coordinate (A1) at (72:3);
\draw (A1) node {$\bullet$};
\coordinate (A2) at (144:3);
\draw (A2) node {$\bullet$};
\coordinate (A3) at (216:3);
\draw (A3) node {$\bullet$};
\coordinate (A4) at (288:3);
\draw (A4) node {$\bullet$};
\coordinate (A5) at (360:3);
\draw (A5) node {$\bullet$};
\draw (A1) -- (A2);
\draw (A1) -- (A3);
\draw (A1) -- (A4);
\draw (A1) -- (A5);
\draw (A2) -- (A3);
\end{tikzpicture}
\begin{tikzpicture}[scale=.19]
\coordinate (A1) at (72:3);
\draw (A1) node {$\bullet$};
\coordinate (A2) at (144:3);
\draw (A2) node {$\bullet$};
\coordinate (A3) at (216:3);
\draw (A3) node {$\bullet$};
\coordinate (A4) at (288:3);
\draw (A4) node {$\bullet$};
\coordinate (A5) at (360:3);
\draw (A5) node {$\bullet$};
\draw (A1) -- (A3);
\draw (A1) -- (A2);
\draw (A1) -- (A5);
\draw (A2) -- (A3);
\draw (A3) -- (A4);
\end{tikzpicture} 
\begin{tikzpicture}[scale=.19]
\coordinate (A1) at (72:3);
\draw (A1) node {$\bullet$};
\coordinate (A2) at (144:3);
\draw (A2) node {$\bullet$};
\coordinate (A3) at (216:3);
\draw (A3) node {$\bullet$};
\coordinate (A4) at (288:3);
\draw (A4) node {$\bullet$};
\coordinate (A5) at (360:3);
\draw (A5) node {$\bullet$};
\draw (A1) -- (A2);
\draw (A1) -- (A3);
\draw (A1) -- (A4);
\draw (A2) -- (A3);
\draw (A4) -- (A5);
\end{tikzpicture}
\begin{tikzpicture}[scale=.19]
\coordinate (A1) at (72:3);
\draw (A1) node {$\bullet$};
\coordinate (A2) at (144:3);
\draw (A2) node {$\bullet$};
\coordinate (A3) at (216:3);
\draw (A3) node {$\bullet$};
\coordinate (A4) at (288:3);
\draw (A4) node {$\bullet$};
\coordinate (A5) at (360:3);
\draw (A5) node {$\bullet$};
\draw (A1) -- (A2);
\draw (A1) -- (A5);
\draw (A2) -- (A3);
\draw (A4) -- (A5);
\draw (A4) -- (A3);
\end{tikzpicture}
\begin{tikzpicture}[scale=.19,color=pink]
\coordinate (A1) at (60:3);
\draw (A1) node {$\bullet$};
\coordinate (A2) at (120:3);
\draw (A2) node {$\bullet$};
\coordinate (A3) at (180:3);
\draw (A3) node {$\bullet$};
\coordinate (A4) at (240:3);
\draw (A4) node {$\bullet$};
\coordinate (A5) at (300:3);
\draw (A5) node {$\bullet$};
\coordinate (A6) at (360:3);
\draw (A6) node {$\bullet$};
\draw (A1) -- (A2);
\draw (A1) -- (A3);
\draw (A1) -- (A4);
\draw (A1) -- (A5);
\draw (A1) -- (A6);
\end{tikzpicture}
\begin{tikzpicture}[scale=.19,color=pink]
\coordinate (A1) at (60:3);
\draw (A1) node {$\bullet$};
\coordinate (A2) at (120:3);
\draw (A2) node {$\bullet$};
\coordinate (A3) at (180:3);
\draw (A3) node {$\bullet$};
\coordinate (A4) at (240:3);
\draw (A4) node {$\bullet$};
\coordinate (A5) at (300:3);
\draw (A5) node {$\bullet$};
\coordinate (A6) at (360:3);
\draw (A6) node {$\bullet$};
\draw (A1) -- (A2);
\draw (A2) -- (A3);
\draw (A2) -- (A4);
\draw (A2) -- (A5);
\draw (A1) -- (A6);
\end{tikzpicture}
\begin{tikzpicture}[scale=.19,color=pink]
\coordinate (A1) at (60:3);
\draw (A1) node {$\bullet$};
\coordinate (A2) at (120:3);
\draw (A2) node {$\bullet$};
\coordinate (A3) at (180:3);
\draw (A3) node {$\bullet$};
\coordinate (A4) at (240:3);
\draw (A4) node {$\bullet$};
\coordinate (A5) at (300:3);
\draw (A5) node {$\bullet$};
\coordinate (A6) at (360:3);
\draw (A6) node {$\bullet$};
\draw (A1) -- (A2);
\draw (A2) -- (A3);
\draw (A2) -- (A5);
\draw (A1) -- (A6);
\draw (A3) -- (A4);
\end{tikzpicture}
\begin{tikzpicture}[scale=.19,color=pink]
\coordinate (A1) at (60:3);
\draw (A1) node {$\bullet$};
\coordinate (A2) at (120:3);
\draw (A2) node {$\bullet$};
\coordinate (A3) at (180:3);
\draw (A3) node {$\bullet$};
\coordinate (A4) at (240:3);
\draw (A4) node {$\bullet$};
\coordinate (A5) at (300:3);
\draw (A5) node {$\bullet$};
\coordinate (A6) at (360:3);
\draw (A6) node {$\bullet$};
\draw (A1) -- (A2);
\draw (A2) -- (A3);
\draw (A2) -- (A4);
\draw (A1) -- (A5);
\draw (A5) -- (A6);
\end{tikzpicture}
\begin{tikzpicture}[scale=.19,color=pink]
\coordinate (A1) at (60:3);
\draw (A1) node {$\bullet$};
\coordinate (A2) at (120:3);
\draw (A2) node {$\bullet$};
\coordinate (A3) at (180:3);
\draw (A3) node {$\bullet$};
\coordinate (A4) at (240:3);
\draw (A4) node {$\bullet$};
\coordinate (A5) at (300:3);
\draw (A5) node {$\bullet$};
\coordinate (A6) at (360:3);
\draw (A6) node {$\bullet$};
\draw (A1) -- (A2);
\draw (A1) -- (A3);
\draw (A2) -- (A3);
\draw (A4) -- (A5);
\draw (A4) -- (A6);
\end{tikzpicture}
\\
\hline
\Gamma_7&
\begin{tikzpicture}[scale=.19]
\coordinate (A1) at (60:3);
\draw (A1) node {$\bullet$};
\coordinate (A2) at (120:3);
\draw (A2) node {$\bullet$};
\coordinate (A3) at (180:3);
\draw (A3) node {$\bullet$};
\coordinate (A4) at (240:3);
\draw (A4) node {$\bullet$};
\coordinate (A5) at (300:3);
\draw (A5) node {$\bullet$};
\coordinate (A6) at (360:3);
\draw (A6) node {$\bullet$};
\draw (A1) -- (A2);
\draw (A1) -- (A3);
\draw (A1) -- (A4);
\draw (A1) -- (A5);
\draw (A1) -- (A6);
\end{tikzpicture}
\begin{tikzpicture}[scale=.19]
\coordinate (A1) at (60:3);
\draw (A1) node {$\bullet$};
\coordinate (A2) at (120:3);
\draw (A2) node {$\bullet$};
\coordinate (A3) at (180:3);
\draw (A3) node {$\bullet$};
\coordinate (A4) at (240:3);
\draw (A4) node {$\bullet$};
\coordinate (A5) at (300:3);
\draw (A5) node {$\bullet$};
\coordinate (A6) at (360:3);
\draw (A6) node {$\bullet$};
\draw (A1) -- (A2);
\draw (A2) -- (A3);
\draw (A2) -- (A4);
\draw (A2) -- (A5);
\draw (A1) -- (A6);
\end{tikzpicture}
\begin{tikzpicture}[scale=.19]
\coordinate (A1) at (60:3);
\draw (A1) node {$\bullet$};
\coordinate (A2) at (120:3);
\draw (A2) node {$\bullet$};
\coordinate (A3) at (180:3);
\draw (A3) node {$\bullet$};
\coordinate (A4) at (240:3);
\draw (A4) node {$\bullet$};
\coordinate (A5) at (300:3);
\draw (A5) node {$\bullet$};
\coordinate (A6) at (360:3);
\draw (A6) node {$\bullet$};
\draw (A1) -- (A2);
\draw (A2) -- (A3);
\draw (A2) -- (A5);
\draw (A1) -- (A6);
\draw (A3) -- (A4);
\end{tikzpicture}
\begin{tikzpicture}[scale=.19]
\coordinate (A1) at (60:3);
\draw (A1) node {$\bullet$};
\coordinate (A2) at (120:3);
\draw (A2) node {$\bullet$};
\coordinate (A3) at (180:3);
\draw (A3) node {$\bullet$};
\coordinate (A4) at (240:3);
\draw (A4) node {$\bullet$};
\coordinate (A5) at (300:3);
\draw (A5) node {$\bullet$};
\coordinate (A6) at (360:3);
\draw (A6) node {$\bullet$};
\draw (A1) -- (A2);
\draw (A2) -- (A3);
\draw (A2) -- (A4);
\draw (A1) -- (A6);
\draw (A5) -- (A6);
\end{tikzpicture} 
\begin{tikzpicture}[scale=.19]
\coordinate (A1) at (60:3);
\draw (A1) node {$\bullet$};
\coordinate (A2) at (120:3);
\draw (A2) node {$\bullet$};
\coordinate (A3) at (180:3);
\draw (A3) node {$\bullet$};
\coordinate (A4) at (240:3);
\draw (A4) node {$\bullet$};
\coordinate (A5) at (300:3);
\draw (A5) node {$\bullet$};
\coordinate (A6) at (360:3);
\draw (A6) node {$\bullet$};
\draw (A1) -- (A2);
\draw (A1) -- (A3);
\draw (A2) -- (A3);
\draw (A4) -- (A5);
\draw (A4) -- (A6);
\end{tikzpicture}
\begin{tikzpicture}[scale=.19,color=pink]
\coordinate (A1) at (51.999074699:3);
\draw (A1) node {$\bullet$};
\coordinate (A2) at (102.8571428571:3);
\draw (A2) node {$\bullet$};
\coordinate (A3) at (154.9635917385:3);
\draw (A3) node {$\bullet$};
\coordinate (A4) at (205.7142857143:3);
\draw (A4) node {$\bullet$};
\coordinate (A5) at (257.1428571429:3);
\draw (A5) node {$\bullet$};
\coordinate (A6) at (308.5714285714:3);
\draw (A6) node {$\bullet$};
\coordinate (A7) at (360:3);
\draw (A7) node {$\bullet$};
\draw (A1) -- (A2);
\draw (A1) -- (A3);
\draw (A1) -- (A4);
\draw (A5) -- (A6);
\draw (A5) -- (A7);
\end{tikzpicture}
 \\
\hline
\end{array}
$$
\caption{Bases for $H^5(\Gamma_n;\qsgn)$}
\label{tab:H5GQeps}
\end{table}

\section{Generalized braid groups}
\label{sect:generalbraidgroups}
\subsection{General setting}
Let $W < \GL_{\ell}(\C)$ be a complex reflection group, $\Ar$ the corresponding hyperplane arrangement. The pure braid group is $P = \pi_1(X)$ with $X = \C^{\ell} \setminus \bigcup \Ar$. The Hurewicz identification $P^{ab} \simeq H_1(X;\Z)$ induces a $W$-equivariant isomorphism
$P^{ab} \simeq \Z \Ar$, where $\Z \Ar$
denotes the free $\Z$-module on $\Ar$ endowed
with the permutation $\Z W$-module structure arising from the natural action of $W$ on $\Ar$.
Let $B = \pi_1(X/W)$.
\begin{definition}
We consider the \emph{quasi-abelianized complex braid group} $\Gamma = B/[P,P]$ associated to the complex reflection group $W$.
\end{definition} 
If $W$ is not irreducible, $\Gamma$ will be the direct product of the quasi-abelianized braid group
associated to its irreducible factors. Therefore, we can
assume without loss of generality in the sequel that $W$ is irreducible.

From the previous considerations we get a short sequence $1 \to \Z \Ar \to \Gamma \to W \to 1$,
and from the Lyndon-Hochschild-Serre spectral sequence one gets the following straightforward generalization of Proposition \ref{prop:main}.
\begin{proposition} For any $\Q W$-module $M$ the following isomorphisms hold:
	$$H^{\bullet}(\Gamma;\Q) = H^{\bullet}(\Z \Ar;\Q)^W \simeq \Lambda^{\bullet}(\Q \Ar)^W.$$
\end{proposition}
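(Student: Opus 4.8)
The plan is to run, essentially verbatim, the argument of Proposition~\ref{prop:main}, now built on the short exact sequence $1 \to \Z\Ar \to \Gamma \to W \to 1$ recorded just above (coming from $P^{ab}\simeq \Z\Ar$ and $\Gamma = B/[P,P]$) in place of the braid-group extension. First I would feed this extension into the Lyndon--Hochschild--Serre spectral sequence
$$E_2^{p,q} = H^p\!\left(W; H^q(\Z\Ar; M)\right) \Rightarrow H^{p+q}(\Gamma; M),$$
which is available precisely because $\Z\Ar$ is normal in $\Gamma$ with quotient $W$.

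The key step is the collapse at $E_2$. Since $W$ is finite and $M$ is a $\Q W$-module, every coefficient group $H^q(\Z\Ar; M)$ is a $\Q$-vector space, so the order of $W$ acts invertibly and the higher group cohomology $H^p(W; H^q(\Z\Ar; M))$ vanishes for $p>0$ (see \cite[Cor.~II,~5.4]{ADEMMILGRAM}). Only the column $p=0$ survives, and $H^0(W; H^q(\Z\Ar;M)) = H^q(\Z\Ar;M)^W$, which yields the isomorphism $H^\bullet(\Gamma; M) = H^\bullet(\Z\Ar; M)^W$; the displayed formula is the special case $M = \Q$.

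For the second isomorphism I would use that $\Z\Ar$ is a finitely generated free abelian group of rank $|\Ar|$, so its classifying space is a torus and $H^\bullet(\Z\Ar;\Q)$ is the exterior algebra $\Lambda^\bullet H^1(\Z\Ar;\Q)$ on its degree-one part $H^1(\Z\Ar;\Q) = \Hom(\Z\Ar, \Q)$. Since $\Q\Ar$ is the permutation $\Q W$-module on $\Ar$, it is self-dual (the basis $\Ar$ provides a $W$-invariant inner product), so $\Hom(\Z\Ar,\Q) \simeq \Q\Ar$ as $\Q W$-modules, and this identification is $W$-equivariant and compatible with the cup product. Hence $H^\bullet(\Z\Ar;\Q) \simeq \Lambda^\bullet(\Q\Ar)$ as graded $\Q W$-algebras, and passing to $W$-invariants (an exact operation in characteristic zero) gives $H^\bullet(\Z\Ar;\Q)^W \simeq \Lambda^\bullet(\Q\Ar)^W$.

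I do not expect a genuine obstacle here: the existence of the spectral sequence, the vanishing of higher cohomology of a finite group over $\Q$, and the exterior-algebra description of the cohomology of a torus are all standard, and the passage from the braid case to an arbitrary complex reflection group changes nothing in the formal structure of the argument. The only points deserving a word of care are the self-duality of the permutation module $\Q\Ar$ and the multiplicativity of the torus identification, both of which are routine.
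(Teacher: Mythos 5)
Your proposal is correct and follows essentially the same route as the paper: the paper proves this proposition by citing the short exact sequence $1 \to \Z\Ar \to \Gamma \to W \to 1$ and declaring the result a straightforward generalization of Proposition~\ref{prop:main}, whose proof is exactly your Lyndon--Hochschild--Serre argument (collapse at $E_2$ since $W$ is finite and the coefficients are $\Q$-vector spaces, then $H^0(W;-)=(-)^W$). Your additional details on the second isomorphism --- torus cohomology as an exterior algebra and the $W$-equivariant self-duality $\Hom(\Z\Ar,\Q)\simeq \Q\Ar$ of the permutation module --- are precisely what the paper leaves implicit, so there is no substantive difference.
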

  This provides a way to compute explicitely the Poincar\'e polynomial for $\Gamma$ in each
of the exceptional cases. We tabulate them in Tables \ref{tab:PoinExc1} to \ref{tab:PoinExc5}.
When there are dots inside the description of the Poincar\'e polynomial, this
implies this is a reciprocal polynomial, so we need to provide only half of the coefficients.
The reason why this reciprocity (often) happens is explained in the next section.

Another general fact is the following.

\begin{proposition} The Poincar\'e polynomial of $\Gamma$ admits $(1+t)^{|\Ar/W|}$
as a factor.
\end{proposition}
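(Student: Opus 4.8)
The plan is to leverage the isomorphism $H^\bullet(\Gamma;\Q) \simeq \Lambda^\bullet(\Q\Ar)^W$ established in the Proposition just above, and to exhibit an exterior subalgebra on $|\Ar/W|$ degree-one generators that splits off multiplicatively. Write $V := \Q\Ar$, which by construction is the permutation $\Q W$-module on the set of hyperplanes $\Ar$. I would first decompose $\Ar$ into its $W$-orbits $\Ar_1 \sqcup \cdots \sqcup \Ar_k$, where $k = |\Ar/W|$, and observe that, since $\Q$ has characteristic $0$, the invariant subspace $V^W$ has dimension exactly $k$ and is spanned by the orbit sums $s_i := \sum_{H \in \Ar_i} \omega_H$, each of which lives in degree $1$ of $\Lambda^\bullet(V)$. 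As the $s_i$ are supported on disjoint orbits they are linearly independent, so they generate inside $\Lambda^\bullet(V)^W$ an exterior subalgebra $\Lambda^\bullet(V^W) = \Lambda(s_1,\dots,s_k)$, whose Poincar\'e polynomial is $(1+t)^k$.

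Next I would obtain the desired factorization by splitting off this invariant piece $W$-equivariantly. Since $W$ is finite and we work over $\Q$, Maschke's theorem furnishes a $W$-stable complement $U$ with $V = V^W \oplus U$. The natural isomorphism of graded algebras $\Lambda^\bullet(V) \simeq \Lambda^\bullet(V^W) \otimes \Lambda^\bullet(U)$ is $W$-equivariant, and $W$ acts trivially on the first tensor factor. Passing to $W$-invariants then yields
$$\Lambda^\bullet(V)^W \simeq \Lambda^\bullet(V^W) \otimes \Lambda^\bullet(U)^W$$
as graded vector spaces, using that for a trivial module $A$ and an arbitrary module $B$ one has $(A \otimes B)^W = A \otimes B^W$.

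Finally, because the Poincar\'e polynomial of a tensor product of graded vector spaces is the product of the Poincar\'e polynomials, I would conclude
$$\sum_r \dim H^r(\Gamma;\Q)\, t^r = (1+t)^k \cdot Q(t),$$
where $Q(t)$ is the Poincar\'e polynomial of $\Lambda^\bullet(U)^W$, a polynomial with non-negative integer coefficients; hence $(1+t)^{|\Ar/W|}$ divides the Poincar\'e polynomial of $\Gamma$. The only points requiring care — and the nearest thing here to an obstacle — are purely formal: that the permutation-module invariants are exactly the orbit sums and sit in degree $1$, and that the tensor splitting of $\Lambda^\bullet(V)$ is compatible with both the grading and the $W$-action, so that taking invariants preserves the factorization. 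Both follow from characteristic-$0$ semisimplicity, so no genuine difficulty arises.
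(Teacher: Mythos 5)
Your proof is correct and follows essentially the same route as the paper: both arguments isolate the trivial $W$-submodule spanned by the orbit sums (one copy of $\Q$ per orbit, giving dimension $|\Ar/W|$), invoke characteristic-zero semisimplicity to split it off, factor the exterior algebra as a tensor product, and pull the trivial factor of Poincar\'e polynomial $(1+t)^{|\Ar/W|}$ out of the $W$-invariants. The only cosmetic difference is that the paper performs the splitting orbit by orbit on the integral lattices $\Z\Ar_i$ and dualizes via $H^1(\bullet;\Q)=\Hom(\bullet;\Q)$ before applying K\"unneth, whereas you split all of $V^W$ off of $\Q\Ar$ at once via Maschke; the substance is identical.
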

\begin{proof}
Let $\Ar= \Ar_1\sqcup \dots \sqcup \Ar_r$ be the decomposition of $\Ar$
in $W$-orbits, with $r = |\Ar/W|$. For each $i \in [1,r]$ the sum of the hyperplanes inside $\Ar_i$ spans
a $W$-invariant submodule 
$\Z \into \Z\Ar_i$. Considering the short exact sequence $0 \to \Z \to \Z \Ar_i \to
\Z \Ar_i/\Z \to 0$, by injectivity of $\Q$ as a $\Z$-module we can apply the functor
$H^1(\bullet;\Q) = \Hom(\bullet;\Q)$ to it and get a short exact sequence of $\Q W$-modules
$0 \to H^1(\Z \Ar_i/\Z;\Q) \to H^1(\Z \Ar_i;\Q) \to H^1(\Z;\Q) \to 0$.
By semisimplicity of $\Q W$ this provides a direct sum decomposition as $\Q W$-modules
of the form
$H^1(\Z \Ar_i;\Q) \simeq  M_i \oplus H^1(\Z;\Q)$ for some $\Q W$-module $M_i$.
Then, by K\"unneth formula $H^{\bullet}(\Z \Ar;\Q)$ is isomorphic to
$H^{\bullet}(\Z ;\Q)^{\otimes r} \otimes \Lambda^{\bullet}(M_1) \otimes \dots \otimes \Lambda^{\bullet}(M_r)$  
whence
$$
H^{\bullet}(\Gamma;\Q) \simeq H^{\bullet}(\Z ;\Q)^{\otimes r} \otimes \left(
\Lambda^{\bullet}(M_1) \otimes \dots \otimes \Lambda^{\bullet}(M_r) \right)^W
$$
which proves the claim, as $1+t$ is the Poincar\'e polynomial of $H^{\bullet}(\Z;\Q)$.
\end{proof}

As in the case of $W = \mathfrak{S}_n$, it is actually quite often the case that the Poincar\'e polynomial is divisible by a higher power of $1+t$ but, as the quotient polynomial has then negative
coefficients, we do not expect any topological reason for this.

\subsection{The $W$-module $H^N(\Z \Ar;\Q)$}

We set $N = |\Ar|$.
We have $H^N(\Gamma;\Q) \simeq H^N(\Z \Ar;\Q)^W \simeq \Lambda^N(\Q \Ar)^{W}$,
and $H^N(\Z \Ar;\Q) \simeq \Lambda^N(\Q \Ar)$ as a $W$-module. The $W$-module structure
on the 1-dimensional vector space $\Lambda^N(\Q \Ar)$ is given by the determinant of the
permutation representation $\Q \Ar$, that is the sign of the permutation action of $W$ on $\Ar$.
Since this abelian representation factorizes through $W^{ab}$, it is sufficient to
determine it on a collection of representatives of the conjugacy classes of reflections.

\begin{table}
$$
\begin{array}{cl}
\hline
G_{4} & 1+t^{1}+t^{3}+t^{4}\\ 
\hline
G_{5} & 1+2t^{1}+2t^{2}+6t^{3}+10t^{4}+6t^{5}+2t^{6}+2t^{7}+t^{8}\\ 
\hline
G_{6} & 1+2t^{1}+3t^{2}+10t^{3}+20t^{4}+24t^{5}+20t^{6}+10t^{7}+3t^{8}+2t^{9}+t^{10}\\ 
\hline
G_{7} & 1+3t^{1}+7t^{2}+30t^{3}+91t^{4}+177t^{5}+253t^{6}+284t^{7}+253t^{8}+\dots+t^{14}\\ 
\hline
G_{8} & 1+t^{1}+t^{3}+t^{4}\\ 
\hline
G_{9} & 1+2t^{1}+4t^{2}+30t^{3}+134t^{4}+376t^{5}+767t^{6}+1284t^{7}+1830t^{8}+2088t^{9}+1830t^{10} \\ 
 & +\dots+t^{18}\\
\hline
G_{10} & 1+2t^{1}+2t^{2}+15t^{3}+50t^{4}+87t^{5}+117t^{6}+142t^{7}+136t^{8}+90t^{9}+41t^{10} \\ 
 & +15t^{11}+5t^{12}+t^{13}\\ 
\hline
G_{11} &1+3t^{1}+10t^{2}+102t^{3}+647t^{4}+2790t^{5}+9543t^{6}+27262t^{7}+65211t^{8}+130587t^{9} \\ 
 & +221252t^{10}+321348t^{11}+402472t^{12}+434088t^{13}+402472t^{14}  +\dots+t^{26}\\ 
\hline
G_{12} & 1+t^{1}+t^{2}+8t^{3}+23t^{4}+38t^{5}+38t^{6}+28t^{7}+22t^{8}+13t^{9}+3t^{10}\\ 
\hline
G_{13} & 1+2t^{1}+4t^{2}+30t^{3}+134t^{4}+376t^{5}+767t^{6}+1284t^{7}+1830t^{8}+2088t^{9}+1830t^{10}
\\ & +1284t^{11}+\dots+t^{18}\\ 
\hline
G_{14} & 1+2t^{1}+5t^{2}+45t^{3}+217t^{4}+666t^{5}+1593t^{6}+3198t^{7}+5293t^{8}+7068t^{9}+7680t^{10}\\ & +6942t^{11}+5267t^{12}+3282t^{13}+1617t^{14}+630t^{15}+206t^{16}+54t^{17}+9t^{18}+t^{19}\\ 
\hline
G_{15} & 1+3t^{1}+10t^{2}+102t^{3}+647t^{4}+2790t^{5}+9543t^{6}+27262t^{7}+65211t^{8}+130587t^{9} \\ 
 & +221252t^{10}+321348t^{11}+402472t^{12}+434088t^{13}+402472t^{14} +\dots +t^{26}\\ 
\hline
G_{16} & 1+t^{1}+5t^{3}+12t^{4}+14t^{5}+14t^{6}+14t^{7}+12t^{8}+5t^{9}+t^{11}+t^{12}\\ 
\hline
\end{array}
$$
\caption{Poincar\'e polynomial of $\Gamma$ (1)}
\label{tab:PoinExc1}
\end{table}

\begin{table}
$$
\begin{array}{cl}
\hline
G_{17} & 1+2t^{1}+10t^{2}+186t^{3}+1908t^{4}+14276t^{5}+87229t^{6}+449072t^{7}+1968096t^{8}+7434074t^{9}\\ &  +24521396t^{10}+71334960t^{11}+184308107t^{12}+425331568t^{13}+880994128t^{14}\\ & +1644502390t^{15}+2775174184t^{16}+4244428460t^{17}+5894942521t^{18}+
7446171200t^{19}\\ & +8563197684t^{20}+8971058152t^{21}+8563197684t^{22}+\dots+t^{42} \\
\hline
G_{18} & 1+2t^{1}+5t^{2}+86t^{3}+636t^{4}+3362t^{5}+14983t^{6}+56130t^{7}+175775t^{8}+467520t^{9}\\ & +1074198t^{10}+2150460t^{11}+3765292t^{12}+5789700t^{13}+7854470t^{14}+9428804t^{15}\\ & +10021408t^{16}+9428804t^{17}+\dots+t^{32}
\\
\hline
G_{19}& 1+3t^{1}+25t^{2}+622t^{3}+9412t^{4}+108079t^{5}+1023742t^{6}+8194704t^{7} \\ 
 & +56357542t^{8}+338123937t^{9}+1791954240t^{10}+8471117898t^{11} \\ 
 & +36002666511t^{12}+138471654165t^{13}+484649380275t^{14}+1550878152688t^{15} \\ 
 & +4555708512956t^{16}+12327211591453t^{17}+30818019548700t^{18} \\ 
 & +71368043007570t^{19}+153441311738490t^{20}+306882630630555t^{21} \\ 
 & +571917595672410t^{22}+994639280131500t^{23} \\ 
 & +1616288882564850t^{24}+2456759132656605t^{25}+3496157157304842t^{26} \\ 
 & +4661542827614878t^{27}+5826928615825804t^{28}+6831571545737539t^{29} \\ 
 & +7514728617905480t^{30}+7757139143486168t^{31} \\ 
 & +7514728617905480t^{32}+\dots+t^{62} \\
\hline
G_{20} & 1+t^{1}+t^{2}+21t^{3}+96t^{4}+262t^{5}+621t^{6}+1302t^{7}+2157t^{
8}+2806t^{9}+3032t^{10}+2806t^{11}\\ & +2157t^{12}+1302t^{13}+621t^{14}+262t^{15}+
96t^{16}+21t^{17}+t^{18}+t^{19}+t^{20}\\
\hline
G_{21} & 1+2t^{1}+15t^{2}+320t^{3}+3912t^{4}+35460t^{5}+264448t^{6}+1663808t^{7}+8950168t^{8} \\ 
 & +41762728t^{9}+171197058t^{10}+622541248t^{11}+2023351148t^{12}+5914410488t^{13} \\ 
 & +15630708668t^{14}+37513654704t^{15}+82061595180t^{16}+164123359044t^{17} \\ 
 & +300892007724t^{18}+506765074640t^{19}+785487036924t^{20}+1122125085488t^{21} \\ 
 & +1479163461648t^{22}+1800719645168t^{23}+2025811056898t^{24} \\ 
 & +2106844801388t^{25}+2025811056898t^{26}+\dots+t^{50}\\ 

\hline
G_{22} & 1+t^{1}+4t^{2}+64t^{3}+476t^{4}+2423t^{5}+9843t^{6}+33748t^{7}+97708t^{8}+238993t^{9} \\ 
 & +500506t^{10}+909454t^{11}+1441874t^{12}+1997499t^{13}+2423604t^{14} \\ 
 & +2583668t^{15}+2423604t^{16}+\dots+t^{30}\\ 
\hline
G_{23} & 1+t^{1}+t^{2}+5t^{3}+22t^{4}+61t^{5}+93t^{6}+96t^{7}+96t^{8}+93t^{9}+\dots+t^{15}\\ 
\hline
G_{24} & 1+t^{1}+t^{2}+6t^{3}+29t^{4}+128t^{5}+355t^{6}+694t^{7}+1168t^{8}+1739t^{9}+2142t^{10} \\ 
& +2142t^{11}+1739t^{12}+\dots+t^{21}\\ 
\hline
G_{25} &1+t^{1}+2t^{3}+3t^{4}+5t^{5}+8t^{6}+5t^{7}+3t^{8}+2t^{9}+t^{11}+t^{12}\\ 
\hline
G_{26} & 1+2t^{1}+2t^{2}+8t^{3}+31t^{4}+106t^{5}+277t^{6}+555t^{7}+951t^{8}+1389t^{9}+1670t^{10} \\ 
& +1670t^{11}+1389t^{12}+\dots+t^{21}\\ 
\hline
G_{27} &1+t^{1}+t^{2}+31t^{3}+411t^{4}+3489t^{5}+22736t^{6}+125582t^{7}+598067t^{8}+2463301t^{9} \\ 
& +8865029t^{10}+28192301t^{11}+79877727t^{12}+202800457t^{13}+463560616t^{14} \\ 
& +957963894t^{15}+1796127602t^{16}+3064066154t^{17}+4766434844t^{18} \\ 
& +6773270962t^{19}+8805088666t^{20}+10482299772t^{21}+11435425556t^{22} \\ 
& +11435425556t^{23}+10482299772t^{24}+\dots+t^{45}\\ 
\hline
\end{array}
$$
\caption{Poincar\'e polynomial of $\Gamma$ (2)}
\label{tab:PoinExc2}
\end{table}

\begin{table}
$$
\begin{array}{cl}
\hline
G_{28} &1+2t^{1}+2t^{2}+4t^{3}+14t^{4}+64t^{5}+232t^{6}+626t^{7}+1329t^{8}+2308t^{9}+3370t^{10} \\ 
 & +4240t^{11}+4668t^{12}+4442t^{13}+3520t^{14}+2264t^{15}+1208t^{16} \\ 
 & +576t^{17}+260t^{18}+98t^{19}+20t^{20}\\ 
\hline
G_{29} &1+t^{1}+4t^{3}+40t^{4}+350t^{5}+2060t^{6}+9746t^{7}+39959t^{8}+142328t^{9}+441562t^{10} \\ 
 & +1204304t^{11}+2910343t^{12}+6267601t^{13}+12085830t^{14}+20949016t^{15} \\ 
 & +32736850t^{16}+46219612t^{17}+59053222t^{18}+68369460t^{19}+71790852t^{20} \\ 
 & +68383196t^{21}+59059542t^{22}+46211024t^{23}+32728563t^{24}+20951525t^{25} \\ 
 & +12091834t^{26}+6268424t^{27}+2907626t^{28}+1203008t^{29}+442248t^{30} \\ 
 & +143010t^{31}+39957t^{32}+9558t^{33}+1998t^{34}+372t^{35}+57t^{36} \\ 
 & +5t^{37}\\ 
\hline
G_{30} & 1+t^{1}+5t^{3}+74t^{4}+771t^{5}+6872t^{6}+53477t^{7}+356220t^{8}+2055627t^{9}+10468466t^{10} \\ 
 & +47582221t^{11}+194357924t^{12}+717683124t^{13}+2409187294t^{14} \\ 
 & +7387893762t^{15}+20778746068t^{16}+53781125768t^{17}+128476950008t^{18} \\ 
 & +283999914842t^{19}+582199298729t^{20}+1108953689280t^{21}+1965874330196t^{22} \\ 
 & +3247963438620t^{23}+5007273674460t^{24}+7210476160546t^{25}+9706413965476t^{26} \\ 
 & +12222890689810t^{27}+14405546870106t^{28}+15895776460298t^{29} \\ 
 & +16425637072516t^{30}+15895776460298t^{31}+\dots+t^{60} \\
\hline
G_{31} & 
1+t^{1}+5t^{3}+49t^{4}+476t^{5}+4276t^{6}+33492t^{7}+222835t^{8}+1284373t^{9}+6541542t^{10} \\ &  +29740777t^{11}+121480400t^{12}+448546734t^{13}+1505717800t^{14}
+4617446330t^{15} \\ & +12986790192t^{16}+33613184414t^{17}+80297915692t^{18}+
177499968070t^{19} \\ & +363874927936t^{20}+693096056917t^{21}+1228670832132t^{22}+2029977118521t^{23} \\ & +3129546983443t^{24}+4506547671508t^{25}
+6066507514016t^{26}+7639306616740t^{27} \\ & +9003468218232t^{28}+9934860323578t^{29}+10266021686780t^{30}
+9934860323578t^{31} \\ & +9003468218232t^{32}+7639306616740t^{33}+6066507514016t^{34}+4506547671508t^{35}
 \\ &  +3129546983443t^{36} + \dots + t^{60} \\
\hline
G_{32} & 
1+t^{1}+2t^{3}+8t^{4}+34t^{5}+156t^{6}+757t^{7}+3074t^{8}+10633t^{9}+32728t^{10}+89486t^{11} \\ 
 & +216246t^{12}+464548t^{13}+895038t^{14}+1552859t^{15}+2427296t^{16} \\ 
 & +3424118t^{17}+4373170t^{18}+5066138t^{19}+5321718t^{20}+5066138t^{21} \\ 
 & +\dots+t^{40}\\
\hline
G_{33} & 
1+t^{1}+t^{3}+7t^{4}+57t^{5}+330t^{6}+1744t^{7}+8320t^{8}+34358t^{9}+123300t^{10}+391292t^{11}\\ & +1109046t^{12}+2817848t^{13}+6440572t^{14}+13304050t^{15}+24941988t^{16}+42557847t^{17}\\ & +66209744t^{18}+94076221t^{19}+122282437t^{20}+145581450t^{
	21}+158837210t^{22}\\ & +158837210t^{23}+145581450t^{24}+122282437t^{25}+94076221t^{26}+66209744t^{27}\\ & 
+42557847t^{28} + \dots + t^{45} \\
\hline
\end{array}
$$
\caption{Poincar\'e polynomial of $\Gamma$ (3)}
\label{tab:PoinExc3}
\end{table}

\begin{table}
$$
\begin{array}{cl}
\hline
G_{34} &
1+t^{1}+t^{3}+6t^{4}+74t^{5}+884t^{6}+13116t^{7}+192326t^{8}+2520671t^{9}+2948\
9979t^{10} \\ 
 & +311014671t^{11}+2980794710t^{12}+26139611410t^{13}+210982251075t^{14} \\ 
 & +1575326752678t^{15}+10928823475928t^{16}+70715940374002t^{17}+428224373356\
294t^{18} \\ 
 & +2434117544074941t^{19}+13022528746334731t^{20}+65732763512909595t^{21} \\ 
 & +313724551754562496t^{22}+1418580582253661206t^{23}+6088075007493453094t^{2\
4} \\ 
 & +24839346043086172191t^{25}+96491305763321611190t^{26} \\ 
 & +357375206458603492081t^{27}+1263576622816720686820t^{28}\\
 & +42700175531539862\
63806t^{29} +13806390088757090593696t^{30}\\ 
 & +42755272532737758859696t^{31}+126929715331036645718996t^{32} \\ 
 & +361557370942693474774788t^{33}+988965749931371371764614t^{34} \\ 
 & +2599567114105643084316213t^{35}+6571127982881893678259444t^{36} \\ 
 & +15983824823231350150241083t^{37}+37435800243874514436429769t^{38} \\ 
 & +84470523627176973820173870t^{39}+183723388889113441013664706t^{40} \\ 
 & +385371010840651984891616378t^{41}+779917521939460133077025458t^{42} \\ 
 & +1523559810300218919356528819t^{43}+2873987823975247126613136662t^{44} \\ 
 & +5237044479243898579045032533t^{45}+9221752235190675504468316867t^{46}\\
 & +15696599549260728754834382628t^{47} +25833986758157842496665978731t^{48} \\ 
 &+41123489125230657930680765396t^{49}+63330173252855596855076193586t^{50} \\\
 
 & +94374375827785073487122176287t^{51}+136116888213151322355118222331t^{52} \\
  & +190049994863645226542541566541t^{53}+256919437500854002357709427236t^{54} \
\\ 
 & +336330900001117489347178011796t^{55}+426419533929987401697017185243t^{56} \
\\ 
 & +523673111843844950655299053043t^{57}+622990426159058822214608054411t^{58} \
\\ 
 & +718022864047728454391354376763t^{59}+801792198186627247766309064228t^{60} \
\\ 
 & +867512870169136852322333424120t^{61}+909489299370872294431572844472t^{62} \
\\ 
 & +923925637456126412476131859172t^{63}+909489299370872294431572844472t^{64} \
\\ 
 & +\dots+t^{126} \\
\hline
\end{array}
$$
\caption{Poincar\'e polynomial of $\Gamma$ (4)}
\label{tab:PoinExc4}
\end{table}

\begin{table}
$$
\begin{array}{cl}
\hline
G_{35} &
1+t^{1}+t^{4}+11t^{5}+44t^{6}+152t^{7}+566t^{8}+1860t^{9}+5004t^{10}+11572t^{11}+23972t^{12} \\ 
 & +44543t^{13}+73478t^{14}+107582t^{15}+140873t^{16}+165803t^{17} \\ 
 & +175170t^{18}+165803t^{19}+\dots+t^{36} \\
 \hline
G_{36} &
1+t^{1}+t^{4}+13t^{5}+78t^{6}+425t^{7}+2660t^{8}+16243t^{9}+87925t^{10}+423770t^{11} \\ 
 & +1838688t^{12}+7218311t^{13}+25769943t^{14}+84136890t^{15}+252379507t^{16} \\ 
 & +697845851t^{17}+1783538069t^{18}+4224088466t^{19}+9292654887t^{20} \\ 
 & +19027776867t^{21}+36326201973t^{22}+64755423636t^{23}+107924917344t^{24} \\ 
 & +168362842542t^{25}+246070368091t^{26}+337208215816t^{27}+433551008742t^{28} \\ 
 & +523248859376t^{29}+593017264581t^{30}+631280251719t^{31}+631280251719t^{32} \\ 
 & +593017264581t^{33}+\dots+t^{63} \\
 \hline
 G_{37} &
 1+t^{1}+t^{4}+11t^{5}+57t^{6}+374t^{7}+3475t^{8}+35474t^{9}+356059t^{10}+3406667t^{11} \\ 
 & +30454784t^{12}+251776769t^{13}+1921850842t^{14}+13577988054t^{15}+89107284374t^{16} \\ 
 & +545150699683t^{17}+3119546807020t^{18}+16747159056781t^{19} \\ 
 & +84573155228165t^{20}\!+\!402728709982771t^{21}\!+\!1812277301989416t^{22}\!+\!7721874254027453t^{23} \\ 
 & +31209241157562083t^{24}+119843494614454207t^{25}+437889713047953430t^{26} \\ 
 & +1524504947000485263t^{27}+5063534269228031737t^{28} \\ 
 & +16063625853649334969t^{29}+48726331566332592976t^{30}+141463543120705887706t^{31} \\ 
 & +393445479558917203986t^{32}\!+\!1049187946405641407641t^{33}\!+\!2684686805077200281859t^{34} \\ 
 & +6596659006231560771532t^{35}+15575444872422490488161t^{36} \\ 
 & +35360469435701311971865t^{37}+77234709556712473520133t^{38} \\ 
 & +162390927793545381417225t^{39}+328841628794206613232301t^{40}\\
 & +641642202531676306651320t^{41} +1206898428559421087980171t^{42} \\ 
 &+2189257614562845036307157t^{43} +3831200825453652719183382t^{44}\\ 
 & +6470472505226135626407762t^{45} +10549683432511669927601215t^{46} \\ 
 &+16610139872539671530393175t^{47} +25261254389459174519874612t^{48} \\ 
 &+37118577878253099665912691t^{49}+52708380587001828382184630t^{50} \\ 
 & +72344836099829736528394025t^{51}+95996032517233887861254109t^{52} \\ 
 & +123164720965672023324202120t^{53}+152815487124127820970155769t^{54} \\ 
 & +183378584548819686523360133t^{55}+212850142779608827070753059t^{56} \\ 
 & +238989633997983323341287019t^{57}+259592188653137826945477851t^{58} \\ 
 & +272791791466392609895802552t^{59}+277338321324370157484233484t^{60} \\ 
 & +272791791466392609895802552t^{61}+\dots+t^{120}\\
\hline
\end{array}
$$
\caption{Poincar\'e polynomial of $\Gamma$ (5)}
\label{tab:PoinExc5}
\end{table}

A uniform description of this module, for instance involving the invariant/coinvariant
theory of complex reflection groups, is missing for now. Therefore we explore it by
direct computation in each case. Using the computer package CHEVIE (see \cite{CHEVIE}) one 
checks the following, which refers to the Shephard-Todd classification of irreducible complex
reflection groups (\cite{shephard_todd}).
\begin{proposition} \label{prop:excexc}
If $W$ is an irreducible reflection group of exceptional type, then the action of $W$ on
$H^N(\Z \Ar;\Q)$ is trivial except in the following cases
$G_8$, $G_{10}$, $G_{12}$, $G_{14}$, $G_{28} = F_4$, $G_{29}$.   
\end{proposition}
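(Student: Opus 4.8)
The plan is to identify the one-dimensional $W$-representation $H^N(\Z\Ar;\Q)$ explicitly and then reduce its triviality to a finite check on reflections. As recalled above, $\Lambda^N(\Q\Ar)$ carries the determinant of the permutation module $\Q\Ar$, so the action of $w\in W$ is multiplication by the scalar $\chi(w):=\mathrm{sgn}(w|_{\Ar})$, the signature of the permutation that $w$ induces on the finite set $\Ar$. This $\chi\colon W\to\{\pm1\}$ is a group homomorphism, hence a linear character; since reflections generate $W$ and $\chi$ is a class function, the action is trivial \emph{if and only if} $\chi(s)=1$ for one representative reflection $s$ in each conjugacy class of reflections of $W$.

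Next I would make $\chi(s)$ explicit for a reflection $s$ of order $d$. Decomposing $\Ar$ into orbits under the cyclic group $\langle s\rangle$, on an orbit of size $k$ the element $s$ acts as a single $k$-cycle, of signature $(-1)^{k-1}$; writing $o_s$ for the number of such orbits one obtains
$$\chi(s)=\prod_{\text{orbits}}(-1)^{k_i-1}=(-1)^{N-o_s}.$$
The key structural observation is that every orbit size $k_i$ divides $d$, so when $d$ is \emph{odd} each factor $(-1)^{k_i-1}$ equals $1$ and hence $\chi(s)=1$ automatically. Thus only reflections of even order can contribute a sign $-1$, and in particular the groups all of whose reflections have odd order---for instance $G_4,G_5,G_{16},G_{18},G_{20},G_{25},G_{32}$---have trivial action at once. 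For an order-$2$ reflection the formula specializes to $\chi(s)=(-1)^{(N-f_s)/2}$, where $f_s$ is the number of hyperplanes of $\Ar$ stabilized by $s$.

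For the reflections of even order the signature depends on the fine combinatorics of the orbit decomposition, for which no uniform invariant appears to govern the answer; this is precisely the main obstacle, and it is the reason the determination must proceed type by type rather than through a single conceptual argument. Concretely I would, for each exceptional $W$ in the Shephard--Todd list, load an explicit model of $W$ and of its reflection arrangement $\Ar$ in CHEVIE, choose a representative reflection in each class (only the even-order classes remaining to be examined), and evaluate the signature of its permutation on $\Ar$ via $\chi(s)=(-1)^{N-o_s}$. The representation is nontrivial exactly when some reflection class returns $-1$; carrying this out shows that this happens only for $G_8,G_{10},G_{12},G_{14},G_{28}=F_4$ and $G_{29}$, while all remaining exceptional groups---including the real exceptional groups $H_3,H_4,E_6,E_7,E_8$ and many groups possessing even-order reflections such as $G_9,G_{11},G_{31}$---yield $\chi\equiv 1$. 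The substance of the argument therefore lies in the exhaustiveness of this verification, the reductions above serving only to cut the work down to the even-order reflection classes.
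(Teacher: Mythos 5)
Your proposal is correct and follows essentially the same route as the paper: identify $H^N(\Z\Ar;\Q)$ with the sign character of the permutation action of $W$ on $\Ar$, reduce triviality to checking representatives of the conjugacy classes of reflections (the paper phrases this as factoring through $W^{ab}$), and settle the question by a case-by-case CHEVIE computation over the Shephard--Todd list. Your extra observation that odd-order reflections automatically act trivially (all $\langle s\rangle$-orbits on $\Ar$ have size dividing the odd order of $s$, hence odd, hence even cycles never occur) is a pleasant refinement that shrinks the machine check, but the substance of both arguments is the same computational verification.
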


\begin{definition}
Let $\eta(w)$ denote the sign of the permutation
action of $w$ on the set of reflecting hyperplanes and $\Q_{\eta}$ the corresponding $\Q W$-module.
\end{definition}
We have $H^N(\Z \Ar;\Q) \simeq \Q_{\eta}$.

Hence we have a Poincar\'e duality result in most of the
cases, as can be observed in the tables.
\begin{proposition}
If $W$ is and irreducible reflection group of exceptional type different from those listed in Proposition \ref{prop:excexc}, then $H^r(\Gamma;\Q) \simeq 
H^{N-r}(\Gamma;\Q)$.

If $W$ is and irreducible reflection group of exceptional type listed in Proposition \ref{prop:excexc}, then $H^r(\Gamma;\Q_\eta) \simeq H^{N-r}(\Gamma;\Q)$.
\end{proposition}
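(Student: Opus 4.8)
The plan is to realize the two isomorphisms as a single orientation-twisted Poincaré duality for the torus $K(\Z\Ar,1)$, pushed down to $W$-invariants by Lemma \ref{lem:pairing}, in exact parallel with the proof of Theorem \ref{thm:poincare} for the symmetric group. Recall that $\Z\Ar \simeq \Z^N$, so $H^{\bullet}(\Z\Ar;\Q) \simeq \Lambda^{\bullet}(\Q\Ar)$, and that cup product gives a $W$-equivariant, non-degenerate pairing
\[
\Lambda^r(\Q\Ar) \otimes \Lambda^{N-r}(\Q\Ar) \longrightarrow \Lambda^N(\Q\Ar) \simeq \Q_{\eta},
\]
the target being identified with $\Q_\eta$ as a $W$-module as recalled just above. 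Geometrically this is the orientation-twisted duality of the $N$-torus: $W$ permutes the circle factors and acts on the top cohomology through the sign $\eta$ of the permutation action on $\Ar$.

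First I would untwist the target so that Lemma \ref{lem:pairing}, which demands trivial coefficients, becomes applicable. Since $\eta$ takes values in $\{\pm 1\}$ we have $\Q_\eta \otimes \Q_\eta \simeq \Q$, so tensoring the left factor of the pairing with $\Q_\eta$ produces a $W$-equivariant non-degenerate pairing
\[
\bigl(\Q_\eta \otimes \Lambda^r(\Q\Ar)\bigr) \otimes \Lambda^{N-r}(\Q\Ar) \longrightarrow \Q_\eta \otimes \Q_\eta \simeq \Q
\]
now landing in the trivial module; non-degeneracy survives because we have merely tensored one factor with a one-dimensional module. Next I would identify the invariant spaces: as $\Q_\eta$ is a trivial $\Z\Ar$-module, $H^r(\Z\Ar;\Q_\eta) \simeq \Q_\eta \otimes \Lambda^r(\Q\Ar)$ as $W$-modules, whence $H^r(\Gamma;\Q_\eta) \simeq (\Q_\eta \otimes \Lambda^r(\Q\Ar))^W$, while $H^{N-r}(\Gamma;\Q) \simeq \Lambda^{N-r}(\Q\Ar)^W$. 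Applying Lemma \ref{lem:pairing} to the pairing above then yields a non-degenerate pairing
\[
H^r(\Gamma;\Q_\eta) \otimes H^{N-r}(\Gamma;\Q) \longrightarrow \Q,
\]
and, all these being finite-dimensional $\Q$-vector spaces, non-degeneracy gives $H^r(\Gamma;\Q_\eta) \simeq H^{N-r}(\Gamma;\Q)$ for every $r$.

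Finally I would specialize using Proposition \ref{prop:excexc}. When $W$ is not among the listed groups, $\eta$ is trivial, so $\Q_\eta \simeq \Q$ and the isomorphism reads $H^r(\Gamma;\Q) \simeq H^{N-r}(\Gamma;\Q)$; when $W$ is one of $G_8, G_{10}, G_{12}, G_{14}, G_{28}, G_{29}$ the character $\eta$ is non-trivial and the isomorphism is the stated $H^r(\Gamma;\Q_\eta) \simeq H^{N-r}(\Gamma;\Q)$. The argument is essentially formal once the twisted-duality pairing is in place: the only genuine external input is the CHEVIE computation recorded in Proposition \ref{prop:excexc} that pins down $\eta$, and the one point requiring care is bookkeeping of the orientation twist, so that the pairing is arranged to land in trivial coefficients \emph{before} $W$-invariants are taken. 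This is exactly where the dichotomy on $\eta$ enters, and it is the only place where the two cases of the statement genuinely diverge.
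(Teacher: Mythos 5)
Your proof is correct and follows essentially the same route as the paper: both identify $\Lambda^N(\Q\Ar)\simeq\Q_\eta$, arrange the wedge-product duality pairing to land in trivial coefficients by inserting a $\Q_\eta$-twist on one factor, and then apply Lemma \ref{lem:pairing} exactly as in Theorem \ref{thm:poincare}. The only cosmetic difference is that you treat both cases uniformly with a single twisted pairing and specialize at the end, whereas the paper writes the untwisted pairing \eqref{eq:pairing_general_untwist} and the twisted pairing \eqref{eq:pairing_general_twist} separately for the two families of groups.
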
 
\begin{proof}
The result follows from Lemma \ref{lem:pairing} applying the same arguments of Theorem \ref{thm:poincare}, cases \ref{itm:even_untwisted} and \ref{itm:odd}.  
Here we are considering the pairings
\begin{align}
	\Lambda^r(\Q\Ar) &\otimes \Lambda^{N-r}(\Q \Ar) && \to \Lambda^N(\Q \Ar) \simeq \Q  \label{eq:pairing_general_untwist}\\
	\Lambda^r(\Q\Ar) &\otimes (\Q_\eta \otimes \Lambda^{N-r}(\Q \Ar)) &&\to \Q_\eta \otimes \Lambda^N(\Q \Ar) \simeq \Q \label{eq:pairing_general_twist}
\end{align}
where the pairing in Equation \eqref{eq:pairing_general_untwist} applies to the cases not listed in Proposition \ref{prop:excexc}, while the pairing in Equation \eqref{eq:pairing_general_twist} applies to the cases listed in Proposition \ref{prop:excexc}.
\end{proof}

We note that in the case of the exceptions of Proposition \ref{prop:excexc}, $N$ is even and $\eta$ maps every distinguished reflection of odd order to $1$ and every one of even order to $-1$.

It is also possible to develop `partial' Poincar\'e duality when the action of $W$ on $\Ar$ has several orbits.
When $W$ is an irreducible reflection group, its action on $\Ar$ has at most 3 orbits,
and in rank at least $3$ it has at most 2, so let us write $\Ar = \Ar_1 \sqcup \dots \sqcup
\Ar_s$ with $s \leq 3$. 
\begin{definition}
Let  $\eta_i : W \to \GL_1(\Q) \simeq \GL(\Lambda^{N_i} \Q \Ar_i)$ with $N_i = |\Ar_i|$ denote the sign of the permutation action on the orbit $\Ar_i$ of reflecting hyperplanes 
and let $\Q_{\eta_i}$ be the associated $\Q \W_n$-module.
\end{definition}
Hence we have $\Q_\eta = \Q_{\eta_1} \otimes \cdots \otimes \Q_{\eta_s}$. 

\begin{proposition}\label{prop:partial_poincare}
We can
write $H^r(\Z \Ar;\Q) = \Lambda^r(\Q \Ar)
= \bigoplus_{i_1+\dots + i_s = r} \Lambda^{i_1}(\Q \Ar_1) \otimes \dots \otimes \Lambda^{i_s}(\Q \Ar_s)$.
Denoting $H^{i_1,\dots,i_s} =  \Lambda^{i_1}(\Q \Ar_1) \otimes \dots \otimes \Lambda^{i_s}(\Q \Ar_s)$,
whenever $\Q_{\eta_k} = 1$ we get a $W$-equivariant isomorphism
$$
H^{i_1,\dots,i_k,\dots, i_s}  \simeq H^{i_1,\dots,N_k-i_k,\dots, i_s}
$$
and hence
$$(H^{i_1,\dots,i_k,\dots, i_s})^W \simeq (H^{i_1,\dots,N_k-i_k,\dots, i_s})^W.$$
\end{proposition}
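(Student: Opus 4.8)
The plan is to reduce the statement to a one-orbit computation and then exploit the self-duality of permutation modules. For a single orbit $\Ar_k$ the wedge product furnishes a $W$-equivariant perfect pairing
$$\Lambda^{i_k}(\Q\Ar_k) \otimes \Lambda^{N_k-i_k}(\Q\Ar_k) \to \Lambda^{N_k}(\Q\Ar_k) \simeq \Q_{\eta_k},$$
where the last isomorphism holds because $W$ acts on the top exterior power $\Lambda^{N_k}(\Q\Ar_k)$ through the determinant of its permutation action on $\Ar_k$, which is exactly $\eta_k$ by definition. This pairing identifies $\Lambda^{i_k}(\Q\Ar_k)$ with $(\Lambda^{N_k-i_k}(\Q\Ar_k))^* \otimes \Q_{\eta_k}$ as $\Q W$-modules.

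Next I would use that $\Q\Ar_k$ is a permutation $\Q W$-module, hence self-dual, so that each exterior power $\Lambda^m(\Q\Ar_k)$ is self-dual as well; equivalently, over a field of characteristic zero every rational representation has rational-valued, hence real, character and is therefore isomorphic to its dual. Combining this with the pairing above yields the $W$-equivariant isomorphism
$$\Lambda^{i_k}(\Q\Ar_k) \simeq \Lambda^{N_k-i_k}(\Q\Ar_k) \otimes \Q_{\eta_k}.$$
Under the hypothesis $\Q_{\eta_k} = 1$ the twist disappears, giving $\Lambda^{i_k}(\Q\Ar_k) \simeq \Lambda^{N_k-i_k}(\Q\Ar_k)$; tensoring with the untouched factors $\bigotimes_{j \neq k} \Lambda^{i_j}(\Q\Ar_j)$ produces the claimed $W$-equivariant isomorphism $H^{i_1,\dots,i_k,\dots,i_s} \simeq H^{i_1,\dots,N_k-i_k,\dots,i_s}$.

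Finally, to pass to invariants I would note that $(-)^W$ is exact on $\Q W$-modules by semisimplicity of $\Q W$, so any $W$-equivariant isomorphism restricts to an isomorphism of invariant subspaces, whence $(H^{i_1,\dots,i_k,\dots,i_s})^W \simeq (H^{i_1,\dots,N_k-i_k,\dots,i_s})^W$. Alternatively one can invoke Lemma \ref{lem:pairing} directly, applied to the non-degenerate $W$-equivariant pairing obtained by tensoring the wedge pairing on the $k$-th factor (which lands in $\Q_{\eta_k} = \Q$) with the permutation inner products $\Lambda^{i_j}(\Q\Ar_j) \otimes \Lambda^{i_j}(\Q\Ar_j) \to \Q$ on the remaining factors.

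The point requiring care---and where this differs from the full Poincar\'e duality of Theorem \ref{thm:poincare}---is that only one index is flipped, so the argument cannot use a single pairing into the one-dimensional top $\Lambda^N(\Q\Ar)$; it genuinely needs the self-duality of each individual permutation module $\Q\Ar_j$ to supply non-degenerate $W$-invariant forms in the unchanged degrees $j \neq k$. Once that self-duality is in hand, the only remaining verifications are that $W$ acts on $\Lambda^{N_k}(\Q\Ar_k)$ precisely through $\eta_k$ and that the several pairings assemble $W$-equivariantly, both of which are routine.
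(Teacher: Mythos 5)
Your proposal is correct and takes essentially the same route as the paper: the paper's proof is exactly the chain $\Lambda^{i_k}(\Q \Ar_k) \simeq (\Lambda^{N_k-i_k}(\Q \Ar_k))^* \simeq \Lambda^{N_k-i_k}(\Q \Ar_k)$ (wedge pairing into the top exterior power, which is the trivial module since $\Q_{\eta_k}=1$, followed by self-duality of rational representations), tensored with the untouched factors, after which the statement on invariants is immediate. Your alternative via Lemma \ref{lem:pairing} and the closing remark are extra commentary (and only that alternative, not the main argument, needs invariant forms on the factors $j \neq k$), but the core argument coincides with the paper's.
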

\begin{proof} The proof follows applying the duality argument to the $k$-th factor of the tensor product:
\begin{align*}
H^{i_1,\dots,i_k,\dots, i_s} &= \Lambda^{i_1}(\Q \Ar_1) \otimes \dots \otimes\Lambda^{i_k}(\Q \Ar_k)\otimes \dots \otimes \Lambda^{i_s}(\Q \Ar_s) \\
&\simeq  \Lambda^{i_1}(\Q \Ar_1) \otimes \dots \otimes (\Lambda^{N_k - i_k}(\Q \Ar_k))^*\otimes \dots \otimes \Lambda^{i_s}(\Q \Ar_s)\\
&\simeq  \Lambda^{i_1}(\Q \Ar_1) \otimes \dots \otimes (\Lambda^{N_k - i_k}(\Q \Ar_k)^*)\otimes \dots \otimes \Lambda^{i_s}(\Q \Ar_s)\\
&\simeq  \Lambda^{i_1}(\Q \Ar_1) \otimes \dots \otimes \Lambda^{N_k - i_k}(\Q \Ar_k)\otimes \dots \otimes \Lambda^{i_s}(\Q \Ar_s)\\
&\simeq  H^{i_1,\dots,N_k-i_k,\dots, i_s}. \qedhere \end{align*} 
\end{proof}

These operations will have a combinatorial interpretation for the groups $G(de,e,n)$.

\begin{table}
$$
\begin{array}{cl}
 \hline
G_8 & t^2+t^{3}+t^{5}+t^6 \\ 
 \hline
G_{10} & t^{1}\!+5t^{2}\!+15t^{3}\!+41t^{4}\!+90t^{5}\!+\!136t^{6}\!+\!142t^{7}\!+\!117t^{8}\!+87t^{9}\!+
50t^{10}\!+15t^{11}\!+2t^{12}\!+2t^{13}\! + t^{14} \\ 
\hline
G_{12} & 3t^{2}+13t^{3}+22t^{4}+28t^{5}+38t^{6}+38t^{7}+23t^{8}+8t^{9}+t^{10}+t^{
11} + t^{12} \\ 
\hline
G_{14} & t^{1}+9t^{2}+54t^{3}+206t^{4}+630t^{5}+1617t^{6}+3282t^{7}+5267t^{8}+
6942t^{9}+7680t^{10}\\ & +7068t^{11}+5293t^{12}+3198t^{13}+1593t^{14}+666t^{15}+
217t^{16}+45t^{17}+5t^{18}+2t^{19} + t^{20} \\ 
\hline
G_{28} & 20t^{4}+98t^{5}+260t^{6}+576t^{7}+1208t^{8}+2264t^{9}+3520t^{10}+
4442t^{11}+4668t^{12} +4240t^{13}\\ &+3370t^{14}+2308t^{15}+1329t^{16}+626t^{17}+
232t^{18}+64t^{19}+14t^{20}+4t^{21}+2t^{22}+ 2t^{23} + t^{24} \\ 
\hline
G_{29} & 5t^{3}+57t^{4}+372t^{5}+1998t^{6}+9558t^{7}+39957t^{8}+143010t^{9}+442248t^{
10}+1203008t^{11}\\ & +2907626t^{12}+6268424t^{13}+12091834t^{14}+20951525t^{15}+
32728563t^{16}+46211024t^{17}\\ & +59059542t^{18}+68383196t^{19}+71790852t^{20}+
68369460t^{21}+59053222t^{22}+46219612t^{23}\\ & +32736850t^{24}+20949016t^{25}+
12085830t^{26}+6267601t^{27}+2910343t^{28}+1204304t^{29}\\ & +441562t^{30}+
142328t^{31}+39959t^{32}+9746t^{33}+2060t^{34}+350t^{35}+40t^{36}+4t^{37}+t^{
39}\\ & + t^{40}\\ 
\hline
\end{array}
$$
\caption{Poincar\'e polynomials for $H^{\bullet}(\Gamma;\Q_{\eta})$}
\end{table}

\subsection{Groups $G(e,e,n)$}
We now focus on the case of the complex reflection group $W$ of type $(e,e,n)$ and the corresponding quasi-abelianized complex braid group $\Gamma(e,e,n)$. 
\begin{proposition} \label{prop:sign_eer}
Let us consider a reflection around some hyperplane $z_i = \zeta z_j$ for $i \neq j$, $\zeta \in \mu_e$.
The sign of its action on
the hyperplanes is $(-1)^{(n-2)e + (e-2)/2} = (-1)^{(e-2)/2}$ if $e$ is even,
and $(-1)^{(n-2)e + (e-1)/2} = (-1)^{n + (e-1)/2}$ if $e$ is odd.
\end{proposition}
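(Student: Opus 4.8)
The plan is to compute the permutation directly, by writing the reflection as an explicit involution of $\C^n$ and tracking its effect on each reflecting hyperplane. The reflecting hyperplanes of $W = G(e,e,n)$ are exactly the $z_i = \zeta z_j$ with $1 \le i < j \le n$ and $\zeta \in \mu_e$, so that $N = |\Ar| = e\binom{n}{2}$. The reflection $s$ associated with a chosen hyperplane $z_i = \zeta z_j$ is the involution fixing every coordinate $z_k$ with $k \notin \{i,j\}$ and satisfying $z_i \mapsto \zeta z_j$, $z_j \mapsto \zeta^{-1} z_i$; one checks it fixes $z_i = \zeta z_j$ pointwise and squares to the identity. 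Since $s^2 = 1$, the induced permutation of $\Ar$ is itself an involution, hence its sign is $(-1)^t$ where $t$ is the number of transpositions, i.e. half the number of hyperplanes it moves. The whole computation therefore reduces to counting these.

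First I would partition $\Ar$ according to how many of the two indices of a hyperplane lie in $\{i,j\}$. The hyperplanes $z_k = \omega z_\ell$ with $k,\ell \notin \{i,j\}$ are all fixed and contribute nothing. For the hyperplanes meeting $\{i,j\}$ in exactly one index, a short substitution ($v = s(w)$, read off the defining equation of the image) shows that $s$ carries $\{z_i = \omega z_k\}$ to $\{z_j = \zeta^{-1}\omega z_k\}$ for each $k \notin \{i,j\}$; since these are disjoint families (one involving $i$, the other $j$), $s$ has no fixed hyperplane among them and pairs them into exactly $e(n-2)$ transpositions.

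The only delicate case is that of the $e$ hyperplanes $z_i = \omega z_j$ meeting $\{i,j\}$ in both indices. The same substitution gives $s\colon \{z_i = \omega z_j\} \mapsto \{z_i = \zeta^2\omega^{-1} z_j\}$, that is, the involution $\omega \mapsto \zeta^2\omega^{-1}$ of $\mu_e$. Its fixed points are the solutions of $\omega^2 = \zeta^2$, namely $\omega = \pm\zeta$; as $\zeta \in \mu_e$ always while $-\zeta \in \mu_e$ iff $-1 \in \mu_e$ iff $e$ is even, there are $2$ fixed points (hence $(e-2)/2$ transpositions) when $e$ is even, and $1$ fixed point (hence $(e-1)/2$ transpositions) when $e$ is odd. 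This parity split is exactly where the two cases of the statement originate, and it is the one point that must be handled with care.

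Finally I would add the transposition counts, $e(n-2)$ from the mixed hyperplanes plus $(e-2)/2$ or $(e-1)/2$ from the last case, to obtain the sign $(-1)^{(n-2)e + (e-2)/2}$ for $e$ even and $(-1)^{(n-2)e + (e-1)/2}$ for $e$ odd. The quoted simplifications then follow by reducing the exponent modulo $2$: when $e$ is even $(n-2)e$ is even, and when $e$ is odd $(n-2)e \equiv n \pmod 2$. As a consistency check, the three families account for $e\binom{n-2}{2} + 2e(n-2) + e = e\binom{n}{2} = N$ hyperplanes, confirming that no hyperplane has been missed.
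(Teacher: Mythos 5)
Your proof is correct and follows essentially the same route as the paper's: both count the hyperplanes moved by the reflection (an involution), namely the $2e(n-2)$ hyperplanes meeting $\{i,j\}$ in one index and the hyperplanes $z_i=\omega z_j$ other than the fixed ones $\omega=\pm\zeta$ (the $-\zeta$ case occurring only for $e$ even), and take the sign as $(-1)^{t}$ with $t$ the number of resulting transpositions. Your version simply makes explicit the substitution computations and the fixed-point analysis $\omega^2=\zeta^2$ that the paper leaves implicit.
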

\begin{proof}
The non-fixed hyperplanes are the ones of the form $z_a = \xi z_b$ for $a \in \{ i,j \}$ and $b \not\in \{ i,j \}$,
and there are $2(n-2)e$ of them, and all the hyperplanes $z_i = \xi z_j$, except for the hyperplane $z_i = \zeta z_j$ itself,
and possibly the hyperplane $z_i = -\zeta z_j$ if $e$ is even. Therefore the claim follows.
\end{proof}

In this case, the cohomology classes can be described as follows.
\begin{definition}
We denote by $\full_n(e)$ the complete $e$-multigraph on the
$n$ vertices $\{ 1,\dots , n \}$
with $e$ edges between each pair of vertices, labelled by the set $\mu_e$ of roots of $1$. 
We fix an ordering of $\mu_e$. 
We call $e$-multigraph on $n$ vertices any subgraph of this labelled multigraph.
\end{definition} 
To an edge between $i$ and $j$ with $i < j$ and label
$\zeta$, we associate the 1-form $\omega_{i,j}^{\zeta} = \dlog (z_i - \zeta z_j)$, and to
a $e$-multigraph we associate the wedge product of the 1-forms associated to its edges, ordered lexicographically
from the tuple $(i,j,\zeta)$.

The $W$-action on such forms has a natural combinatorial translation on the collection of $e$-multigraphs. In particular,
the reflection $(z_1,z_2,\dots, z_n) \mapsto (\xi z_2,\xi^{-1}z_1,z_3,\dots,z_n)$ acts as follows.
\begin{center}

\begin{tikzpicture}
\begin{scope}
\draw[ultra thick] (0,0) -- (2,0);
\draw[ultra thick,red] (2,0) -- (4,0);
\draw[ultra thick,blue] (0,0) -- (-2,1);
\draw[ultra thick,green] (0,0) -- (-2,-1);
\draw[ultra thick] (0,0) -- (2,0);
\draw (0,0) node {$\bullet$};
\draw (2,0) node {$\bullet$};
\draw (-2,1) node {$\bullet$};
\draw (-2,-1) node {$\bullet$};
\draw (4,0) node {$\bullet$};
\draw (1,-0.25) node {$\zeta$};
\draw (0,-0.2) node {$\mathbf{1}$};
\draw (2,-0.2) node {$\mathbf{2}$};
\draw (-2,0.8) node {$\mathbf{i}$};
\draw (-2,-1.25) node {$\mathbf{j}$};
\draw (4,-0.2) node {$\mathbf{k}$};
\draw (3,-0.2) node {$\zeta_k$};
\draw (-1,0.3) node {$\zeta_i$};
\draw (-1,-0.75) node {$\zeta_j$};
\end{scope}
\draw (5,0) node {$\mapsto$};
\begin{scope}[shift={(8,0)}]
\draw[ultra thick] (0,0) -- (2,0);
\draw[ultra thick,red] (0,0) -- (-2,0);
\draw[ultra thick,blue] (2,0) -- (4,1);
\draw[ultra thick,green] (2,0) -- (4,-1);
\draw[ultra thick] (0,0) -- (2,0);
\draw (0,0) node {$\bullet$};
\draw (2,0) node {$\bullet$};
\draw (4,1) node {$\bullet$};
\draw (4,-1) node {$\bullet$};
\draw (-2,0) node {$\bullet$};
\draw (1,-0.25) node {$\zeta^{-1}\xi^2$};
\draw (0,-0.2) node {$\mathbf{1}$};
\draw (2,-0.2) node {$\mathbf{2}$};
\draw (4,0.8) node {$\mathbf{i}$};
\draw (4,-1.25) node {$\mathbf{j}$};
\draw (-2,-0.2) node {$\mathbf{k}$};
\draw (-1,-0.2) node {$\xi \zeta_k$};
\draw (3.2,0.3) node {$\xi^{-1} \zeta_i$};
\draw (3.2,-0.8) node {$\xi^{-1} \zeta_j$};
\end{scope}
\end{tikzpicture}

\end{center}

\begin{definition}
	Let $\Stab_W(\Delta)$ denote the subgroup of $W$ preserving the multigraph $\Delta$.
\end{definition} 
\begin{definition}
A $e$-multigraph $\Delta$ is called \emph{invariant} if every 
element of $\Stab_W(\Delta)$ induces an even permutation of the edges.
\end{definition}
As in section \ref{sect:combHGammanQ}, Theorem \ref{thm:generators} and \ref{thm:skewgenerators}, we have the following result. The proof uses the same argument.
\begin{theorem}
A basis for the cohomology group $H^\bullet(\Gamma(e,e,n);\Q)$ is naturally indexed by the orbits
under the action of $W$ of the collection of invariant $e$-multigraphs.
\end{theorem}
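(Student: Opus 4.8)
The plan is to mirror the argument of Theorem \ref{thm:generators}, replacing the complete graph $\full_n$ by the complete labelled multigraph $\full_n(e)$ and the symmetric group $\W_n$ by $W = G(e,e,n)$. First I would record the analogue of Proposition \ref{prop:main}: since $1 \to \Z\Ar \to \Gamma(e,e,n) \to W \to 1$ is exact and $W$ is finite while $\Q$ is divisible, the Lyndon--Hochschild--Serre spectral sequence collapses and gives $H^\bullet(\Gamma(e,e,n);\Q) \simeq H^\bullet(\Z\Ar;\Q)^W \simeq \Lambda^\bullet(\Q\Ar)^W$. Thus the task reduces to exhibiting a basis of the $W$-invariants of the exterior algebra on $\Q\Ar$, where $\Ar$ is now the set of hyperplanes $z_i = \zeta z_j$ indexed by the edges of $\full_n(e)$.

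Next I would set up the combinatorial dictionary. For an $e$-multigraph $\Delta \subset \full_n(e)$, define $\mu_\Delta \in \Lambda^{|E\Delta|}(\Q\Ar)$ as the wedge of the 1-forms $\omega_{ij}^\zeta$ associated to its edges, ordered lexicographically by $(i,j,\zeta)$, exactly as in the text preceding the statement. The $W$-action permutes these forms, so for $\sigma \in W$ one gets $\sigma(\mu_\Delta) = \sgn_\Delta(\sigma)\,\mu_{\sigma(\Delta)}$, where $\sgn_\Delta(\sigma)$ is the sign of the induced permutation on edges; this is the precise analogue of the untwisted Lemma in the braid case, and its proof is the same bookkeeping of a signed permutation of basis 1-forms. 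I would then form the averaging projector $\pi_W(\mu_\Delta) = \tfrac{1}{|W|}\sum_{\sigma \in W} \sigma(\mu_\Delta)$ onto $\Lambda^\bullet(\Q\Ar)^W$ and define, for each invariant $e$-multigraph $\Delta$ with stabilizer $\Stab_W(\Delta)$,
$$
\alpha_\Delta := \sum_{[\sigma] \in W/\Stab_W(\Delta)} \sigma(\mu_\Delta).
$$

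The heart of the argument is then identical to Theorem \ref{thm:generators}. If $\Delta$ is invariant, every element of $\Stab_W(\Delta)$ acts trivially on $\mu_\Delta$ (by the definition of invariance, $\sgn_\Delta(\sigma)=1$ on the stabilizer), so the summand $\sigma(\mu_\Delta)$ is independent of the chosen coset representative and $\pi_W(\mu_\Delta)$ is a nonzero multiple of $\alpha_\Delta$. If $\Delta$ is not invariant, there is $\sigma \in \Stab_W(\Delta)$ with $\sgn_\Delta(\sigma) = -1$, so $\sigma(\mu_\Delta) = -\mu_\Delta$ and the average vanishes. Since the $\mu_\Delta$ for distinct multigraphs are distinct basis elements of $\Lambda^\bullet(\Q\Ar)$, the nonzero averages $\alpha_\Delta$ are linearly independent; as $\pi_W$ is a surjective projection onto the invariants and the various $W$-orbits of multigraphs partition the monomial basis, the $\alpha_\Delta$ indexed by $W$-orbits of invariant $e$-multigraphs span and hence form a basis. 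I would present the classes as indexed by $W$-orbits of invariant multigraphs rather than by isomorphism classes, which is the clean formulation for $G(e,e,n)$.

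The main obstacle, and the one point where the $G(e,e,n)$ case genuinely differs from the symmetric-group case, is that the $W$-action on edge-labels of $\full_n(e)$ is subtler than a mere permutation of vertices: as the displayed picture shows, a reflection $(z_1,z_2,\dots) \mapsto (\xi z_2, \xi^{-1} z_1, z_3, \dots)$ not only swaps two vertices but also twists every incident edge-label by a power of $\xi$. So the key care is to verify that $\sgn_\Delta(\sigma)$ and the orbit structure are computed with respect to this twisted action on the labelled edge set, and in particular that "invariant" means invariance of $\Stab_W(\Delta)$ with this combinatorial action (this is exactly how \emph{invariant $e$-multigraph} was defined). Once the action is correctly translated into a signed permutation of the $\omega_{ij}^\zeta$, every step of the proof of Theorem \ref{thm:generators} transfers verbatim, which is why the authors can legitimately say the proof uses the same argument.
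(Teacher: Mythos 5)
Your proposal is correct and takes essentially the same approach as the paper: the paper's entire proof is the remark that ``the proof uses the same argument'' as Theorem \ref{thm:generators}, and your write-up is precisely that argument (reduction to $\Lambda^\bullet(\Q\Ar)^W$ via the Lyndon--Hochschild--Serre spectral sequence, then averaging: invariant multigraphs yield nonzero, linearly independent classes supported on disjoint orbits, non-invariant ones average to zero) transported to the twisted $W$-action on labelled edges. Your explicit verification that $\sgn_\Delta(\sigma)$ must be computed with respect to the label-twisting action is exactly the point the paper leaves implicit, and you resolve it as intended.
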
  
A similar formula for the cup-product holds as well.

A similar Poincar\'e duality holds.
\begin{proposition}
Let $\Gamma$ be the quasi-abelianized complex braid group of type $(e,e,n)$.
In the following cases:
\begin{enumerate}
	\item when 	$e$ even and $(e-2)/2$ is even,
	\item when $e$ odd and $n + (e-1)/2$ is even
\end{enumerate} 
we have an isomorphism
$H^r(\Gamma;\Q) \simeq H^{N-r}(\Gamma;\Q).$
\end{proposition}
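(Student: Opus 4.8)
The plan is to reduce the statement to the triviality of the one-dimensional $W$-representation $\Q_\eta \simeq H^N(\Z\Ar;\Q)$ and then to invoke the abstract pairing Lemma \ref{lem:pairing}, following the same scheme as in the proof of Theorem \ref{thm:poincare}, case \ref{itm:even_untwisted}. First I would record the duality pairing on the exterior algebra: the wedge product gives a non-degenerate $W$-equivariant pairing
$$
\Lambda^r(\Q\Ar) \otimes \Lambda^{N-r}(\Q\Ar) \to \Lambda^N(\Q\Ar) \simeq \Q_\eta,
$$
where the $W$-module structure on the top exterior power is given by $\eta$, the sign of the permutation action of $W$ on the set of reflecting hyperplanes.

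The key step is to observe that in the two listed cases the character $\eta$ is trivial. Since $W = G(e,e,n)$ is generated by its reflections and $\eta$ is a homomorphism into $\{\pm 1\}$ (hence factors through $W^{ab}$), it suffices to evaluate $\eta$ on a single reflection. By Proposition \ref{prop:sign_eer}, the sign of the action of a reflection about $z_i = \zeta z_j$ on the hyperplanes equals $(-1)^{(e-2)/2}$ when $e$ is even and $(-1)^{n+(e-1)/2}$ when $e$ is odd, and this value does not depend on the chosen hyperplane. Case (1) ($e$ even with $(e-2)/2$ even) and case (2) ($e$ odd with $n+(e-1)/2$ even) are precisely the conditions under which this common value equals $+1$; hence $\eta$ takes the value $1$ on every generator of $W$, so $\Q_\eta \simeq \Q$ as a $W$-module.

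With $\eta$ trivial, the pairing above becomes a non-degenerate $W$-equivariant pairing into the trivial module,
$$
\Lambda^r(\Q\Ar) \otimes \Lambda^{N-r}(\Q\Ar) \to \Q.
$$
Applying Lemma \ref{lem:pairing} with $U = \Lambda^r(\Q\Ar)$, $V = \Lambda^{N-r}(\Q\Ar)$, $G = W$ and $\kk = \Q$ yields a non-degenerate pairing on the invariants $\Lambda^r(\Q\Ar)^W \otimes \Lambda^{N-r}(\Q\Ar)^W \to \Q$. Finally, the generalized version of Proposition \ref{prop:main} identifies $H^r(\Gamma;\Q) \simeq \Lambda^r(\Q\Ar)^W$ and $H^{N-r}(\Gamma;\Q) \simeq \Lambda^{N-r}(\Q\Ar)^W$, so a non-degenerate pairing between finite-dimensional rational vector spaces gives $H^r(\Gamma;\Q) \simeq (H^{N-r}(\Gamma;\Q))^*\simeq H^{N-r}(\Gamma;\Q)$, as claimed.

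I do not anticipate a serious obstacle, as this is the $(e,e,n)$-analogue of Theorem \ref{thm:poincare}(\ref{itm:even_untwisted}). The only point requiring genuine care is matching the two parity conditions to the triviality of $\eta$: one must use that Proposition \ref{prop:sign_eer} delivers the \emph{same} sign for every reflecting hyperplane, so that a single evaluation pins down the character on all of $W$. Once this is in place the remaining verification is purely formal.
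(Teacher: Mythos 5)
Your proposal is correct and follows essentially the same route as the paper: the paper's own (very terse) proof likewise observes that by Proposition \ref{prop:sign_eer} the two listed cases are exactly those in which $\Q_\eta = 1$, and then applies Lemma \ref{lem:pairing} to the wedge pairing $\Lambda^r(\Q\Ar)\otimes\Lambda^{N-r}(\Q\Ar)\to\Lambda^N(\Q\Ar)$. Your write-up merely makes explicit the details the paper leaves implicit, in particular that all reflections of $G(e,e,n)$ have hyperplanes of the form $z_i=\zeta z_j$ and carry the same sign, so triviality of the character $\eta$ follows from a single evaluation.
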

\begin{proof}
Because of Proposition \ref{prop:sign_eer} the cases above are exactly those when $\Q_{\eta}=1$, so we can apply Lemma \ref{lem:pairing}. 
\end{proof}
Similar to the classical case, also here the Poincaré duality is combinatorially described by taking the complement $e$-multigraph.

The hyperplane arrangement in this case contains only one orbit, except possibly when $n=2$, that is when $W$ is a dihedral group. In this case the full multigraph can be identified with $\mu_e(\C)$. There are two orbits exactly when  
$e$ is even, and these are identified with the classes modulo $\mu_2(\C) \subset \mu_e(\C)$. We have $\Q_{\eta_1} = \Q_{\eta_2} = \Q$ if and only if $e \equiv 2 \mod 4$, and the partial Poincar\'e duality is obtained by taking the complement in each of the classes.

We tabulate in Table \ref{tab:BettiDn} the first Betti numbers in Coxeter type $D_n$ corresponding to the case $e = 2$. 
As an example,
the 2-multigraphs appearing for $n=4$ and $r =4$ are the ones pictured below, where a black edges means a $+1$ label, and a blue one a $-1$ label. 
Both are mapped to $0$ in the 1-dimensional cohomology group for $n = 5$, a basis of which being  
the linear graph on $5$ vertices with labels 1, which is inherited
from $\Gamma_n$. We shall prove later on that this class stabilizes.

\begin{center}

\begin{tikzpicture}[scale=.5]
\draw (-1,1) node {$\bullet$}; 
\draw (1,1) node {$\bullet$}; 
\draw (-1,-1) node {$\bullet$}; 
\draw (1,-1) node {$\bullet$};
\draw[blue,ultra thick] (-1,1+.1) -- (1,1+.1); 
\draw[ultra thick] (-1,1-.1) -- (1,1-.1); 
\draw[ultra thick] (-1,-1) -- (-1,1);
\draw[ultra thick] (1,-1) -- (1,1);
\end{tikzpicture}
\ 
\begin{tikzpicture}[scale=.5]
\draw (-1,1) node {$\bullet$}; 
\draw (1,1) node {$\bullet$}; 
\draw (-1,-1) node {$\bullet$}; 
\draw (1,-1) node {$\bullet$};
\draw[blue,ultra thick] (-1,1+.1) -- (1,1+.1); 
\draw[ultra thick] (-1,1-.1) -- (1,1-.1); 
\draw[blue,ultra thick] (-1,-1+.1) -- (1,-1+.1); 
\draw[ultra thick] (-1,-1-.1) -- (1,-1-.1); 
\end{tikzpicture}

\end{center}
\begin{table}
$$
\begin{array}{|c||c|c|c|c|c|c|c|c|c|c|c|c|c|c|c|}
\hline
n & 0 & 1 & 2 & 3 & 4 & 5 & 6 & 7 & 8 & 9 & 10 & 11 & 12 & 13  \\
\hline
\hline
2 & 1 & 2 & 1 &&&&&&&&&&\\ 
 \hline 
3 & 1 & 1 & 0 & 0 & 0 & 1 & 1&&&&&&&\\ 
 \hline 
4 & 1 & 1 & 0 & 1 & 2 & 10 & 18 & 10 & 2 & 1 & 0 & 1 & 1 &\\ 
 \hline 
5 & 1 & 1 & 0 & 0 & 1 & 11 & 27 & 38 & 55 & 90 & 112 & 90 & 55&\dots \\ 
 \hline 
6 & 1 & 1 & 0 & 0 & 1 & 17 & 64 & 171 & 473 & 1267 & 2758 & 4834 & 7322 &\dots \\ 
 \hline 
7 & 1 & 1 & 0 & 0 & 1 & 14 & 49 & 122 & 387 & 1440 & 4741 & 13401 & 33899 &\dots \\ 
 \hline 
8 & 1 & 1 & 0 & 0 & 1 & 14 & 53 & 158 & 630 & 3030 & 13848 & 57350 & 215531 &\dots \\ 
 \hline 
\end{array}
$$
\caption{First Betti numbers for $\Gamma$ in Coxeter type $D_n$}
\label{tab:BettiDn}
\end{table}

\subsection{Groups $G(de,e,n)$, $d>1$}
We now focus on the case of the complex reflection group $W$ of type $(e,e,n)$ and the corresponding quasi-abelianized complex braid group $\Gamma(de,e,n)$. 
In this case, the cohomology classes can be described as follows. 
\begin{definition}
We denote by $\tilde{K}_n(de)$ the full multigraph with loops on the
$n$ vertices $\{ 1,\dots , n \}$
with $de$ edges between each pair of vertices, labelled by the set $\mu_{de}$ of roots of $1$, and one unlabelled loop per vertex.
We fix an ordering of $\mu_{de}$. We $de$-multigraph with loops
on $n$ vertices any subgraph of $\tilde{K}_n(de)$.
\end{definition} 
To an edge between $i$ and $j$ with $i < j$ and label
$\zeta$, we associate the 1-form $\omega_{i,j}^{\zeta} = \mathrm{d}\log (z_i - \zeta z_j)$, to a loop around the vertex $i$ we
associate the 1-form $\mathrm{d}\log (z_i)$, and to
a $e$-multigraph we associate the wedge product of the 1-forms associated to its edges and loops, ordered lexicographically
from the tuple $(i,j,\zeta)$ for the edges, loops ordered by their label, and edges considered smaller than loops.

The $W$-action on such forms has a natural combinatorial translation on the collection of $e$-multigraphs with loops. The loops are
interverted according to the permutation associated to $w \in G(de,e,n)$ under the natural morphism $G(de,e,n) \to \W_n$.
The reflection $(z_1,z_2,\dots, z_n) \mapsto (\xi z_2,\xi^{-1}z_1,z_3,\dots,z_n)$ acts as before on the edges,
and the reflections which are not conjugates of this one are conjugates of a reflection of the
form $(z_1,z_2,\dots, z_n) \mapsto (\xi z_1,z_2,\dots,z_n)$, and this one acts on the edges as follows

\begin{center}
\begin{tikzpicture}
\begin{scope}
\draw[ultra thick] (0,0) -- (2,0);
\draw[ultra thick,red] (2,0) -- (4,0);
\draw[ultra thick,blue] (0,0) -- (-2,1);
\draw[ultra thick,green] (0,0) -- (-2,-1);
\draw[ultra thick] (0,0) -- (2,0);
\draw (0,0) node {$\bullet$};
\draw (2,0) node {$\bullet$};
\draw (-2,1) node {$\bullet$};
\draw (-2,-1) node {$\bullet$};
\draw (4,0) node {$\bullet$};
\draw (1,-0.25) node {$\zeta$};
\draw (0,-0.2) node {$\mathbf{1}$};
\draw (2,-0.2) node {$\mathbf{2}$};
\draw (-2,0.8) node {$\mathbf{i}$};
\draw (-2,-1.25) node {$\mathbf{j}$};
\draw (4,-0.2) node {$\mathbf{k}$};
\draw (3,-0.2) node {$\zeta_k$};
\draw (-1,0.3) node {$\zeta_i$};
\draw (-1,-0.75) node {$\zeta_j$};
\end{scope}
\draw (5,0) node {$\mapsto$};
\begin{scope}[shift={(8,0)}]
\draw[ultra thick] (0,0) -- (2,0);
\draw[ultra thick,red] (2,0) -- (4,0);
\draw[ultra thick,blue] (0,0) -- (-2,1);
\draw[ultra thick,green] (0,0) -- (-2,-1);
\draw[ultra thick] (0,0) -- (2,0);
\draw (0,0) node {$\bullet$};
\draw (2,0) node {$\bullet$};
\draw (-2,1) node {$\bullet$};
\draw (-2,-1) node {$\bullet$};
\draw (4,0) node {$\bullet$};
\draw (1,-0.25) node {$\xi \zeta$};
\draw (0,-0.2) node {$\mathbf{1}$};
\draw (2,-0.2) node {$\mathbf{2}$};
\draw (-2,0.8) node {$\mathbf{i}$};
\draw (-2,-1.25) node {$\mathbf{j}$};
\draw (4,-0.2) node {$\mathbf{k}$};
\draw (3,-0.2) node {$\zeta_k$};
\draw (-1,0.3) node {$\xi \zeta_i$};
\draw (-1,-0.75) node {$\xi \zeta_j$};
\end{scope}
\end{tikzpicture}
\end{center}

\begin{definition}
	Let $\Stab_W(\Delta)$ denote the subgroup of $W$ preserving the multigraph with loops $\Delta$.
\end{definition} 
\begin{definition}
A $de$-multigraph with loops $\Delta$ is called invariant if every 
	element of $\Stab_W(\Delta)$ induces an even permutation of the edges.
\end{definition}
As in section \ref{sect:combHGammanQ} we have a description an additive basis of the cohomology. The proof uses the same argument.
\begin{theorem}
A basis for the cohomology group $H^\bullet(\Gamma(de,e,n);\Q)$ is naturally indexed by the orbits
under the action of $W$ of the collection of invariant $de$-multigraphs with loops.
\end{theorem}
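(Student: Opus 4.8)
The plan is to mirror the proof of Theorem~\ref{thm:generators}, the only genuinely new ingredient being the loops coming from the coordinate hyperplanes $z_i=0$, which occur precisely because $d>1$. First I would invoke the isomorphism $H^{\bullet}(\Gamma(de,e,n);\Q)\simeq\Lambda^{\bullet}(\Q\Ar)^{W}$ (the analogue of Proposition~\ref{prop:main} proved in the general setting), and record that $\Lambda^{\bullet}(\Q\Ar)$ carries the $\Q$-basis $\{\mu_\Delta\}$ indexed by all $de$-multigraphs with loops $\Delta\subseteq\tilde K_n(de)$, where $\mu_\Delta$ is the lexicographically ordered wedge of the $1$-forms $\omega^{\zeta}_{i,j}$ and $\dlog(z_i)$ attached to the edges and loops of $\Delta$. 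This holds because $\Q\Ar$ has a basis indexed by the hyperplanes, i.e.\ by edges and loops, so exterior monomials correspond bijectively to sub-multigraphs.

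Next I would set up the sign dictionary: for $\sigma\in W$ one has $\sigma(\mu_\Delta)=\eps_\Delta(\sigma)\,\mu_{\sigma(\Delta)}$, where $\sigma(\Delta)$ is the multigraph obtained by the combinatorial action depicted above and $\eps_\Delta(\sigma)\in\{\pm1\}$ is the sign of the permutation that $\sigma$ induces on the ordered list of edges and loops of $\Delta$. This requires checking that $W$ genuinely permutes the $1$-forms up to sign: the reflections act on the edge-labels by the root-of-unity rescalings shown in the two diagrams, while a loop $\dlog(z_i)$ is sent to $\dlog(z_{\bar\sigma(i)})$ (the scalar disappears since $\dlog(cz)=\dlog(z)$), so loops are merely permuted by the image $\bar\sigma\in\W_n$ with no extra scalar. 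Thus $\Q\Ar$ is a permutation-type $\Q W$-module and all signs arise from reordering alone.

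With this in hand the argument is formally identical to Theorem~\ref{thm:generators}. Consider the Reynolds projection $\pi_W=\frac{1}{|W|}\sum_{\sigma\in W}\sigma$ onto $\Lambda^{\bullet}(\Q\Ar)^{W}$, which exists since $\Q W$ is semisimple. Grouping $\pi_W(\mu_\Delta)=\frac{1}{|W|}\sum_{\sigma}\eps_\Delta(\sigma)\mu_{\sigma(\Delta)}$ by cosets of $\Stab_W(\Delta)$, and using that $\tau\mapsto\eps_\Delta(\tau)$ is a homomorphism $\Stab_W(\Delta)\to\{\pm1\}$, the coefficient of each $\mu_{\Delta'}$ in the orbit is $\pm\frac{1}{|W|}\sum_{\tau\in\Stab_W(\Delta)}\eps_\Delta(\tau)$, which equals $0$ unless $\eps_\Delta$ is trivial on $\Stab_W(\Delta)$. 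Hence $\pi_W(\mu_\Delta)\neq0$ exactly when $\Delta$ is invariant; otherwise some $\tau$ gives $\tau(\mu_\Delta)=-\mu_\Delta$ and the signs cancel. For invariant $\Delta$ the class $\alpha_\Delta:=\pi_W(\mu_\Delta)$ is a nonzero invariant supported on the monomials of the single orbit $W\cdot\Delta$. Since distinct orbits yield disjoint supports, these $\alpha_\Delta$ are linearly independent, and they span $\Lambda^{\bullet}(\Q\Ar)^{W}$ because $\pi_W$ surjects onto the invariants and each $\pi_W(\mu_\Delta)$ is zero or proportional to one of them. This exhibits the $W$-orbits of invariant $de$-multigraphs with loops as an index set for a basis.

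The main obstacle is the sign bookkeeping for $\eps_\Delta$ in the presence of loops and nontrivial edge-labels: one must confirm that the combinatorial action on multigraphs-with-loops records the $W$-action on $\Lambda^{\bullet}(\Q\Ar)$ exactly, that the parity contributed by permuted loops is correctly folded into the invariance condition, and that the diagonal elements of $\Stab_W(\Delta)$ (which permute edge-labels while fixing loops) are accounted for by the same sign. Once this dictionary is verified, the averaging mechanism runs verbatim as in the type $A$ case.
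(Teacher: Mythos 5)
Your proposal is correct and follows essentially the same route as the paper: the authors' own proof is precisely ``the same argument as Theorem~\ref{thm:generators}'' transported to the multigraph-with-loops setting via the general isomorphism $H^{\bullet}(\Gamma;\Q)\simeq\Lambda^{\bullet}(\Q\Ar)^{W}$, which is exactly the averaging/Reynolds argument you spell out. Your extra care with the sign dictionary (loops permuted with no scalar since $\dlog(cz)=\dlog(z)$, signs arising only from reordering edges and loops) fills in details the paper leaves implicit, but introduces no new idea.
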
  
A similar formula for the cup-product holds as well.

For $n \geq 3$, there are two orbits of hyperplanes, the one corresponding to edges and the one corresponding to loops. We denote
$\eta_1,\eta_2$ the corresponding characters, and compute them now. 
\begin{proposition}
Let $\rho$ be the reflection 
$$(z_1,z_2,\dots, z_n) \mapsto (\xi z_2,\xi^{-1}z_1,z_3,\dots,z_n)$$ 
and let $\tau$ be the reflection
$$\tau:(z_1,z_2,\dots, z_n) \mapsto (\xi z_1,z_2,\dots,z_n).$$
The value of $\eta_1\eta_2$ on the reflection $\rho$ is given as follows: $\rho$ acts on $H^N(\Z \Ar;\Q)$ as $1$ if and only if one of the following condition is satisfied:
\begin{enumerate}
	\item $de$ is even and $(de-2)/2$ is even,
	\item $de$ is odd and $n + (de-1)/2$ is even.
\end{enumerate} 
The action of $\rho$ on loops is given by a transposition, whence the value of $\eta_2$ on $\rho$
is $-1$. 
The value of $\eta_1$ on $\rho$ is then deduced from the value of $\eta_1\eta_2$ and $\eta2$.
The reflection $\tau$ fixes the loops, hence the value of $\eta_2$ on $\tau$ is $1$. 
The reflection $\tau$ acts on edges with
cycles of length the order $o(\xi)$ of $\xi$ inside $\C^{\times}$. Since there are $(n-1)de/o(\xi)$ such cycles,
 the values of $\eta_1$ on $\tau$ is $1$ if and only if one of the following conditions is satisfied
 \begin{enumerate}
 	\item $o(\xi)$ is odd,  
 	\item both $o(\xi)$ and $(n-1)de/o(\xi)$ are even. 
 \end{enumerate}
\end{proposition}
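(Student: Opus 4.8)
The plan is to compute each of $\eta_1,\eta_2$ on the two distinguished reflections $\rho$ and $\tau$ by writing down explicitly the permutation they induce on each of the two orbits of hyperplanes (edges and loops) and reading off its sign. Since $H^N(\Z\Ar;\Q)\simeq \Lambda^N(\Q\Ar)\simeq \Q_{\eta_1}\otimes\Q_{\eta_2}$ and each $\eta_i$ is by definition the sign of the permutation action on the $i$-th orbit, the action of any $w\in W$ on $H^N$ is $\eta_1(w)\eta_2(w)$; it therefore suffices to count, orbit by orbit, the parity of the permutation induced by $\rho$ and by $\tau$. Both reflections fix every hyperplane that does not involve the vertices they move, so each count localizes to a small family of hyperplanes.

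For $\rho$ the image in $\W_n$ is the transposition $(1\,2)$, so on the $n$ loops $z_i=0$ it acts by this single transposition, giving $\eta_2(\rho)=-1$. On the edge orbit $\rho$ acts exactly as the reflection of Proposition~\ref{prop:sign_eer}, but on the $\mu_{de}$-labelled complete multigraph rather than the $\mu_e$-labelled one, so the same transposition count applies with $e$ replaced by $de$: being an involution, $\rho$ pairs the $2(n-2)de$ edges meeting exactly one of the vertices $1,2$ into $(n-2)de$ transpositions, while on the $de$ edges joining $1$ and $2$ it acts on labels by $\zeta\mapsto\xi^2\zeta^{-1}$, whose fixed labels are $\zeta=\pm\xi$ (two if $de$ is even, one if $de$ is odd). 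Hence $\eta_1(\rho)=(-1)^{(de-2)/2}$ for $de$ even and $(-1)^{n+(de-1)/2}$ for $de$ odd, so that $\eta_1(\rho)=1$ precisely under the two stated parity conditions; the action on $H^N$ is then the product $\eta_1(\rho)\eta_2(\rho)=-\eta_1(\rho)$, from which $\eta_1(\rho)$ is recovered using $\eta_2(\rho)=-1$.

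For $\tau$ the image in $\W_n$ is trivial, so $\tau$ fixes each loop and $\eta_2(\tau)=1$, and it also fixes every edge not incident to vertex $1$. On the edges from $1$ to a fixed vertex $j$, $\tau$ sends $z_1=\zeta z_j$ to $z_1=\xi\zeta z_j$, i.e. it acts on the $\mu_{de}$ labels by multiplication by $\xi$, which is a product of $de/o(\xi)$ cycles each of length $o(\xi)$. Summing over the $n-1$ vertices $j\neq 1$ gives $(n-1)de/o(\xi)$ cycles of length $o(\xi)$, whence $\eta_1(\tau)=(-1)^{(o(\xi)-1)(n-1)de/o(\xi)}$. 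This equals $1$ exactly when $o(\xi)$ is odd (each cycle then being even) or when $o(\xi)$ is even and $(n-1)de/o(\xi)$ is even, which are the two conditions claimed.

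The step I expect to be error-prone, rather than conceptually hard, is the bookkeeping of the label twist on the edges incident to the moving vertices: one must correctly identify the fixed labels $\pm\xi$ for the involution $\rho$ and the common cycle length $o(\xi)$ for $\tau$, and, crucially, keep the loop contribution to $\eta_2$ separate from the edge contribution to $\eta_1$, so that the action on $H^N$ (the product $\eta_1\eta_2$) is not conflated with the sign on the edge orbit alone. Once these parities are tabulated the argument reduces entirely to the elementary sign arithmetic already performed in Proposition~\ref{prop:sign_eer}.
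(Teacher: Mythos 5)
Your computations are correct, and your route is the same one the paper takes: the paper prints no separate proof, its justification being folded into the statement itself (a single transposition of loops for $\eta_2(\rho)$, the edge count of Proposition \ref{prop:sign_eer} with $e$ replaced by $de$ for $\rho$, and the cycle count of length $o(\xi)$ for $\tau$). Your treatment of $\tau$ and of the loop orbit agrees with the paper's sentence by sentence, and your edge counts are right.

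However, what you prove about $\rho$ is \emph{not} what the proposition literally asserts, and the discrepancy is on the paper's side, not yours. You show that the two parity conditions are equivalent to $\eta_1(\rho)=1$, so that the action on $H^N(\Z\Ar;\Q)$, which is $\eta_1(\rho)\eta_2(\rho)=-\eta_1(\rho)$, equals $-1$ under those conditions; the proposition instead claims that those conditions are equivalent to the $H^N$-action being $+1$, which is incompatible with its own assertion $\eta_2(\rho)=-1$ and with the direct edge count. The paper has transported the conditions of Proposition \ref{prop:sign_eer} --- valid in type $(e,e,n)$, where every hyperplane is of edge type --- into type $(de,e,n)$, $d>1$, forgetting the extra factor $-1$ contributed by the loop orbit; this is exactly the conflation you flagged as the error-prone step. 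A concrete check settles it: in $G(4,2,3)$ with $\xi=1$, the reflection $\rho$ swaps $z_1,z_2$ and induces $5$ transpositions on the $12$ edge hyperplanes (the pair $z_1=iz_2\leftrightarrow z_1=-iz_2$, and $z_1=\zeta z_3\leftrightarrow z_2=\zeta z_3$ for the four $\zeta\in\mu_4$) plus one transposition of loops, hence acts on $H^N$ by $(-1)^{6}=+1$, even though $(de-2)/2=1$ is odd. So your argument proves the corrected statement (conditions $\Leftrightarrow\eta_1(\rho)=1\Leftrightarrow$ the $H^N$-action is $-1$), and you should say so explicitly rather than present it as a proof of the proposition as written; note also that the error propagates to the remark following the proposition, since for $G(4,2,n)$ every reflection then has $\eta_1\eta_2=1$, so $\Q_\eta$ \emph{is} trivial there.
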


When $\Q_{\eta_1} = \Q$, a partial duality, as described in Proposition \ref{prop:partial_poincare}, is obtained by taking the complement inside the collection of edges of the full multigraph $\tilde{K}_n(de)$. 

\begin{remark}
	Note that $\Q_{\eta} = \Q_{\eta_1}\otimes \Q_{\eta_2}$ is never equal to the trivial module in this case.
\end{remark}
\begin{table}
$$
\begin{array}{|c||c|c|c|c|c|c|c|c|c|c|c|c|c|c|c|}
\hline 
n & 0 & 1 & 2 & 3 & 4 & 5 & 6 & 7 & 8 & 9 & 10 & 11 & 12 & 13  \\
\hline
\hline
2 & 1 & 2 & 1 & 0 & 0 &&&&&&&&&\\ 
 \hline 
3 & 1 & 2 & 2 & 2 & 2 & 5 & 7 & 3 & 0 & 0&&&&\\ 
 \hline 
4 & 1 & 2 & 2 & 3 & 6 & 20 & 46 & 64 & 66 & 59 & 46 & 27 & 9& \dots \\ 
 \hline 
5 & 1 & 2 & 2 & 3 & 9 & 36 & 109 & 254 & 524 & 1017 & 1724 & 2388 & 2728& \dots \\ 
 \hline 
6 & 1 & 2 & 2 & 3 & 9 & 43 & 156 & 467 & 1383 & 4081 & 11027 & 26065 & 
53897& \dots \\ 
 \hline 
7 & 1 & 2 & 2 & 3 & 9 & 44 & 175 & 591 & 2090 & 7853 & 28545 & 95611 & 
292529& \dots \\ 
 \hline 
8 & 1 & 2 & 2 & 3 & 9 & 44 & 178 & 632 & 2425 & 10295 & 44336 & 184803 & 
735485& \dots \\ 
 \hline 
\end{array}
$$
\caption{First Betti numbers for $\Gamma$ in Coxeter type $B_n$}
\end{table}

\subsection{Stabilization}
\label{sect:stabilization}

We first consider the case of $G(d,1,n)$. The complete $d$-multigraph with loops on $n$ vertices $\tilde{K}_n(d)$ is naturally identified to a subgraph
of $\tilde{K}_{n+1}(d)$, where the vertex $n+1$ has no loops and no incident edge. This enables us to
identify any $d$ multigraph $\Delta$ with loops on $n$ vertices with a multigraph with loops on $n+1$ vertices.

\begin{proposition}
The natural maps 
$H^r(\Gamma(d,1,n+1);\Q) \to H^r(\Gamma(d,1,n);\Q)$
are surjective. Moreover $H^r(\Gamma(d,1,n);\Q)$  stabilizes
for $n \geq 2r$.
\end{proposition}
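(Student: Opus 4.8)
The plan is to run the argument of Proposition \ref{prop:stab_untwisted} essentially verbatim, replacing graphs by $d$-multigraphs with loops and $\full_n$ by $\tilde K_n(d)$. The inclusion $G(d,1,n) \hookrightarrow G(d,1,n+1)$ fixing the last coordinate induces compatible inclusions of the (pure) braid groups, hence a group morphism $\Gamma(d,1,n) \hookrightarrow \Gamma(d,1,n+1)$ and a restriction homomorphism $H^r(\Gamma(d,1,n+1);\Q) \to H^r(\Gamma(d,1,n);\Q)$. Under $H^\bullet(\Gamma;\Q) \simeq \Lambda^\bullet(\Q\Ar)^W$ this is the $W$-equivariant map $\Lambda^\bullet(\Q\Ar_{n+1}) \to \Lambda^\bullet(\Q\Ar_n)$ dual to the inclusion of arrangements, which sends the dual generator of a hyperplane to itself if that hyperplane survives in $G(d,1,n)$ and to $0$ otherwise. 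Concretely, exactly the $1$-forms attached to the vertex $n+1$ die: the edge-forms $\omega_{i,n+1}^\zeta = \dlog(z_i - \zeta z_{n+1})$ and the loop-form $\dlog(z_{n+1})$.

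I would then record the combinatorial description of this map, using for each invariant $d$-multigraph with loops $\Delta$ the analogue $\alpha_\Delta$ of the class \eqref{eq:alpha_graph}. Call a vertex \emph{essential} if it carries a loop or an incident edge, and choose representatives so that inessential vertices occupy the top labels. Arguing as for Proposition \ref{prop:stab_untwisted}, the restriction sends $\alpha_\Delta$ to the corresponding class $\alpha_\Delta$ in $H^r(\Gamma(d,1,n);\Q)$ when the representative leaves vertex $n+1$ inessential (then $\Delta \subset \tilde K_n(d)$ and the surviving summands are precisely the $W$-translates lying in $\tilde K_n(d)$, which reassemble into the class computed for $G(d,1,n)$), and to $0$ otherwise. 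The crux is the vanishing case: if $[\Delta]$ has no inessential vertex, then all $n+1$ vertices are essential; since the image of $W=G(d,1,n+1)$ in $\W_{n+1}$ is all of $\W_{n+1}$, while the diagonal part merely relabels edges and preserves the loop-structure, every translate $w(\Delta)$ again has all $n+1$ vertices essential, so every summand of $\alpha_\Delta$ carries a $1$-form attached to vertex $n+1$ and restricts to $0$.

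Surjectivity follows at once: adjoining one inessential vertex to an invariant multigraph keeps it invariant, because the only new stabilizer elements either permute inessential vertices or scale $z_{n+1}$, and both fix every edge and loop, hence induce the identity on the edge set; thus each basis class $\alpha_\Delta$ of $H^r(\Gamma(d,1,n);\Q)$ lifts to the class attached to $\Delta$ with one inessential vertex added. For stabilization I would count essential vertices: a multigraph with $m$ edges and $\ell$ loops, $m+\ell=r$, has at most $2m+\ell \le 2r$ essential vertices, so for $n \ge 2r$ every isomorphism class with $r$ edges-and-loops already fits on $n$ vertices and passing to $n+1$ vertices only adds isolated loopless vertices, creating no new class; hence $\dim H^r(\Gamma(d,1,n);\Q)$ is constant for $n \ge 2r$, and the surjective restriction maps are then isomorphisms. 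The main obstacle is purely bookkeeping, namely verifying that in the presence of loops essentiality of all vertices is preserved by the full $W$-action (not just by its symmetric quotient), so that the vanishing step transplants cleanly from Proposition \ref{prop:stab_untwisted}.
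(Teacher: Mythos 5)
Your proposal is correct and takes essentially the same route as the paper: surjectivity comes from the compatibility of stabilizers between $G(d,1,n)$ and $G(d,1,n+1)$ (so that invariant multigraphs with an inessential vertex restrict to the corresponding classes and all others restrict to zero), and stabilization comes from the count that a multigraph with $r$ edges and loops has at most $2r$ essential vertices, so all classes are realized once $n \geq 2r$ and the surjections become isomorphisms. The paper's proof is a two-sentence sketch; your write-up simply makes explicit the details it leaves implicit by analogy with Proposition \ref{prop:stab_untwisted}.
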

\begin{proof}
We clearly have $\Stab_{G(d,1,n)}(\Delta) = \Stab_{G(d,1,n+1)}(\Delta) \cap G(d,1,n)$ and this implies
the first part of the statement.
Moreover, since any $d$-multigraph with loops with at most $r$ edges can be realized as
a subgraph of $\tilde{K}_n(d)$ with $n \geq 2r$, in that range the cohomology groups have the same
dimension, and therefore we obtain the stability result.
\end{proof}

\begin{theorem} Let $d,e\geq 1$ be integers.
	The inclusion map $\Gamma(de,e,n) \into \Gamma(de,e,n+1)$ induces stabilization of the rational
	cohomology groups of degree $r$ for $n \geq 2r+1$ (and even $n \geq 2r$ when $e=1$).
\end{theorem}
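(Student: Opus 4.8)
The plan is to imitate the combinatorial analysis of the restriction map used in Proposition \ref{prop:stab_untwisted} and in the case $G(d,1,n)$ settled just above, the genuinely new input being a careful comparison of the invariance condition across the inclusion. Write $W_n := G(de,e,n)$ and $W_{n+1} := G(de,e,n+1)$. First I would fix the combinatorial model: by the basis theorem for $\Gamma(de,e,n)$, the space $H^r(\Gamma(de,e,n);\Q) \cong \Lambda^\bullet(\Q\Ar)^{W_n}$ has a basis indexed by $W_n$-orbits of invariant $de$-multigraphs with loops on $n$ vertices having exactly $r$ edges and loops in total, and I embed $\tilde{K}_n(de) \hookrightarrow \tilde{K}_{n+1}(de)$ by declaring the vertex $n+1$ to carry no loop and no incident edge. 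The group inclusion $W_n \hookrightarrow W_{n+1}$, $g \mapsto g \oplus 1$ (which respects the determinant condition, the product of nonzero entries being unchanged), induces on $\Lambda^\bullet(\Q\Ar)$ the projection killing every generator $\omega_{i,n+1}^\zeta$ and the loop at $n+1$. Hence the restriction sends the basis class of an orbit $[\Theta]$ to the basis class of $[\Theta|_n]$ when the orbit of $\Theta$ contains a representative in which $n+1$ is isolated, and to $0$ otherwise.

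Injectivity is the easy half. The kernel is spanned by those classes $[\Theta]$ for which every representative occupies all $n+1$ vertices, i.e. $\Theta$ has no isolated vertex. Since a multigraph with $r$ edges and loops has at most $2r$ non-isolated vertices (each edge using two, each loop one), as soon as $n+1 > 2r$, that is $n \geq 2r$, no such $\Theta$ exists and the map is injective.

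Surjectivity is where the value $2r+1$, rather than $2r$, enters, and it is the main obstacle. A class indexed by $[\Delta]$ on $n$ vertices lies in the image precisely when the multigraph $\Delta^{+}$ obtained from $\Delta$ by adding one isolated vertex is itself invariant for $W_{n+1}$. The key point is that passing to $W_{n+1}$ can enlarge the effective symmetry acting on $\Delta$: because the coordinate $z_{n+1}$ of the new isolated vertex may be rescaled by any element of $\mu_{de}$, while the determinant constraint of $G(de,e,n+1)$ is absorbed entirely by that single rescaling, the image in the edge-permutation group of $\Stab_{W_{n+1}}(\Delta^{+})$ coincides with the corresponding image of the full monomial stabilizer $\Stab_{G(de,1,n)}(\Delta)$ (permutations among isolated vertices contribute only the trivial edge permutation). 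Thus $\Delta^{+}$ is invariant if and only if every element of $\Stab_{G(de,1,n)}(\Delta)$ induces an even permutation of edges, a condition a priori stronger than invariance of $\Delta$ for $W_n$. I would then observe that these two conditions already coincide whenever $\Delta$ possesses an isolated vertex of its own, since that spare coordinate can absorb the determinant defect inside $W_n$ already, so that invariance is preserved under $\Delta \mapsto \Delta^{+}$. When $n \geq 2r+1$ every size-$r$ multigraph on $n$ vertices has at most $2r < n$ non-isolated vertices, hence an isolated vertex; therefore every invariant $\Delta$ lifts and the map is surjective. Combining the two halves yields the isomorphism for $n \geq 2r+1$.

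Finally, for $e=1$ one has $G(de,1,n) = G(d,1,n) = W_n$, so the strengthened invariance condition appearing in the surjectivity step is literally the invariance condition itself; surjectivity then holds with no constraint (as already recorded in the $G(d,1,n)$ proposition above), and only the injectivity bound $n \geq 2r$ remains, recovering the sharper range. The delicate point to get right is the identification of the edge-permutation image of $\Stab_{W_{n+1}}(\Delta^{+})$ with that of the full monomial stabilizer $\Stab_{G(de,1,n)}(\Delta)$; this rests on the surjectivity of the determinant character $G(de,1,n) \to \mu_{de}$ together with the fact that a single free coordinate suffices to neutralize it, and it is exactly this mechanism that is invisible when $e=1$.
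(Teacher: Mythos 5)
Your proof is correct, and it turns on exactly the same key fact as the paper's: the determinant constraint of $G(de,e,n+1)$ can be absorbed into a spare coordinate, so that $\Stab_{G(de,e,n+1)}(\Delta^{+})$ and $\Stab_{G(de,1,n)}(\Delta)$ have the same image in the group of permutations of edges and loops (in the paper this is the map $\iota : M \mapsto \mathrm{diag}(M,m^{-1})$ together with the assertion that $\iota(G(de,1,n))$ and $\Stab_{G(de,e,n+1)}(\tilde{K}_n(de))$ have the same image). Where you differ is in how this fact is converted into stabilization. The paper first proves the $G(de,1,n)$ stabilization for $n \geq 2r$ and then runs a diagram chase: the map $p_n$ induced by $\iota$ is surjective, the restriction maps $a_n$ are injective for $n \geq 2r$, the maps $b_n$ between the $G(de,1,\cdot)$ rows are isomorphisms, and commutativity forces $a_{n+1}$ to be surjective; moreover for $d=1$ the chase must be redone with the loop-free submodule $H^r_0$, because the arrangements of $G(e,e,n)$ and $G(e,1,n)$ differ. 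You instead analyze the restriction map directly on the combinatorial basis: its kernel is spanned by orbits of graphs without isolated vertices (injectivity for $n \geq 2r$), and its image consists of the classes $[\Delta]$ with $\Delta^{+}$ invariant, which by the key fact is governed by $\Stab_{G(de,1,n)}(\Delta)$ and collapses back to invariance of $\Delta$ as soon as $\Delta$ itself has an isolated vertex, i.e. for $n \geq 2r+1$. This organization buys two things: the shift from $2r$ to $2r+1$ is explained exactly (it is the possible failure of determinant absorption for graphs covering all vertices), and the cases $d=1$ and $d>1$ are handled uniformly, since you only ever use the action of $G(de,1,n)$ on multigraphs, never its cohomology. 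The paper's chase, conversely, buys the right not to examine how restriction acts on individual basis classes.

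One small repair: your claim that restriction sends the basis class of $[\Theta]$ to \emph{the} basis class of $[\Theta|_n]$ (or to zero) is not literally true at the boundary $n = 2r$. When $\Theta|_n$ has no isolated vertex, the intersection of the $G(de,e,n+1)$-orbit of $\Theta$ with $\tilde{K}_n(de)$ may split into several $G(de,e,n)$-orbits, for precisely the determinant reason your surjectivity step isolates. (Type $D$ example with $r=1$, $n=2$: the single edges labelled $+1$ and $-1$ are $G(2,2,3)$-equivalent but not $G(2,2,2)$-equivalent, and indeed $\dim H^1$ drops from $2$ to $1$ between $D_2$ and $D_3$.) The restriction is then a positive linear combination of several basis classes; since distinct orbits $[\Theta]$ yield combinations with disjoint supports, your injectivity conclusion for $n \geq 2r$ still stands, and for $n \geq 2r+1$ the isolated-vertex absorption rules out any splitting, so your surjectivity step is unaffected. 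With that phrasing adjusted, the argument is complete.
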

\begin{proof}
	In general, we have the following commutative diagram of groups:
\begin{center}
\begin{tikzcd}
G(de,e,n) \arrow[r] \arrow[d, symbol=\subset]& G(de,e,n+1)\arrow[d, symbol=\subset] \\
G(de,1,n) \ar[r] \arrow[ur, hookrightarrow] & G(de,1,n+1)
\end{tikzcd}
\end{center}
where the map $\iota : G(de,1,n) \to G(de,e,n+1)$ is given by $M \mapsto \mathrm{diag}(M,m^{-1})$ where $m$ is the product of the
non-zero entries in $M$. 

It is readily checked that the groups $\iota(G(de,1,n))$ and $\Stab_{G(de,e,n+1)}(\tilde{K}_n(de))$ have the same
image inside the group of permutations of the edges and loops of $\tilde{K}_n(de)$.

 This implies for $d > 1$ that
the natural map 
$p_n : H^r(\Gamma(de,1,n);\Q) \to H^r(\Gamma(de,e,n+1);\Q)$
 is surjective for all $r$. 
 
Assume $n \geq 2r$. 
We have the following diagram of cohomology groups
$$
\xymatrix{
H^r(\Gamma(de,e,n);\Q)  & 
H^r(\Gamma(de,e,n+1);\Q)\ar[l]_{a_n} \ar@{->>}[dl]_{p_n} &
 H^r(\Gamma(de,e,n+2);\Q)\ar[l]_{a_{n+1}} \ar@{->>}[dl]_{p_{n+1}} \\
H^r(\Gamma(de,1,n);\Q)\ar[u]^{c_n}  & 
H^r(\Gamma(de,1,n+1);\Q)\ar[u]^{c_{n+1}} 
\ar[l]_{b_n}^{\simeq} & 
H^r(\Gamma(de,1,n+2);\Q)\ar[u]^{c_{n+2}}\ar[l]_{b_{n+1}}^{\simeq}  \\
}
$$

Then $a_n$ and $a_{n+1}$ are injective, as all multigraphs with $r$ edges can be realized
on $2r$ vertices. From the surjectivity of the maps $p_n$, $p_{n+1}$, this readily implies
that the vertical maps $c_n$, $c_{n+1}$ are injective. By elementary diagram chasing this
implies that $a_{n+1}$ is surjective.
Thus $a_{n+1}$ is an isomorphism, and the sequence $H^r(\Gamma(de,e,n);\Q)$ stabilizes
for $n \geq 2r+1$.

\smallskip

For $d=1$ we can define $q_n$ as the composite of $p_n$ with the obvious projection map from
$H^r(\Gamma(de,1,n);\Q)$ to its submodule $H^r_0(\Gamma(de,1,n);\Q)$ spanned
by the invariant graphs with no loops.
Then, for $n \geq 2r$, the restriction $b_n^0 : H^r_0(\Gamma(de,1,n+1);\Q)\to H^r_0(\Gamma(de,1,n);\Q)$ of $b_n$ is an isomorphism and the same argument can be applied to the diagram
$$
\xymatrix{
H^r(\Gamma(e,e,n);\Q)  & 
H^r(\Gamma(e,e,n+1);\Q)\ar[l]_{a_n} \ar@{->>}[dl]_{q_n} &
 H^r(\Gamma(e,e,n+2);\Q)\ar[l]_{a_{n+1}} \ar@{->>}[dl]_{q_{n+1}} \\
H_0^r(\Gamma(de,1,n);\Q)\ar[u]^{c_n}  & 
H_0^r(\Gamma(de,1,n+1);\Q)\ar[u]^{c_{n+1}} 
\ar[l]_{b_n^0}^{\simeq} & 
H_0^r(\Gamma(de,1,n+2);\Q)\ar[u]^{c_{n+2}}\ar[l]_{b_{n+1}^0}^{\simeq}  \\
}
$$
thus providing stability again for $n \geq 2r+1$.

This completes the proof of the Theorem.
\end{proof}

\providecommand{\bysame}{\leavevmode\hbox
	to3em{\hrulefill}\thinspace}

\bibliographystyle{abbrv}
\bibliography{biblio}
\nocite{*}

\end{document}